\documentclass[11pt]{amsart}

\usepackage[margin=1in]{geometry}
\usepackage{amsmath,amssymb,amsthm,mathtools}
\usepackage{xcolor}
\usepackage[colorlinks=true,linkcolor=blue!55!black,citecolor=blue!55!black,urlcolor=blue!55!black]{hyperref}
\usepackage{microtype}
\newtheorem{theorem}{Theorem}[section]
\newtheorem{proposition}[theorem]{Proposition}
\newtheorem{corollary}[theorem]{Corollary}
\newtheorem{lemma}[theorem]{Lemma}
\theoremstyle{definition}

\newtheorem{example}[theorem]{Example}
\theoremstyle{remark}
\newtheorem{remark}[theorem]{Remark}

\newcommand{\doi}[1]{\href{https://doi.org/#1}{doi:\nolinkurl{#1}}}
\newcommand{\K}{\mathbb K}
\newcommand{\SR}[1]{\K[#1]}
\DeclareMathOperator{\pdim}{pdim}
\DeclareMathOperator{\reg}{reg}
\DeclareMathOperator{\indim}{indim}

\title[Betti Numbers for Skeletons of Chordal Clique Complexes]{Explicit Betti Numbers for Skeletons of Chordal Clique Complexes and Their Alexander Duals}

\author{Mohammed Rafiq Namiq}
\address{Department of Mathematics, College of Science, University of Sulaimani, Sulaymaniyah, Kurdistan Region, Iraq}
\email{mohammed.namiq@univsul.edu.iq}

\keywords{Simplicial complexes, Alexander duality, chordal graphs, Betti numbers, skeletons, clique separators, regularity, projective dimension, Cohen--Macaulay rings, initially Cohen--Macaulay rings, level rings}
\subjclass[2020]{Primary 13F55, 13D02; Secondary 05E40, 05C25}

\begin{document}
	\begin{abstract}
		Let $\Delta_{\mathbf r}=\Delta_{\mathbf r}(n_1,\ldots,n_e)$ be the clique complex of a chordal graph with maximal cliques $S_1,\ldots,S_e$ in a leaf order, where $n_m=|S_m|$, $r_m=|S_{m+1}\cap(S_1\cup\cdots\cup S_m)|$, $\mathbf r=(r_1,\ldots,r_{e-1})$, and $N_{\mathbf r}$ is the number of vertices. We determine the graded Betti numbers $\beta_{i,j}\bigl(\mathbb K[(\Delta_{\mathbf r})_{(k)}]\bigr)$ of every skeleton and derive explicit formulas for its regularity, projective dimension, depth, multiplicity, Cohen--Macaulayness, and initially Cohen--Macaulayness. We also compute its extremal Betti numbers and describe its integral homology and homotopy type. For the Alexander dual $\Delta_{\mathbf r}^{\vee}$, we construct an explicit minimal multigraded resolution whose shifts are determined by $N_{\mathbf r}-n_m$ and $N_{\mathbf r}-r_m$, and obtain its graded and multigraded Betti numbers, canonical module, Cohen--Macaulay type, $a$-invariant, and Gorenstein criterion. We further show that every proper skeleton $(\Delta_{\mathbf r}^{\vee})_{(k)}$ is Cohen--Macaulay and level, determine its Betti numbers and type, and characterize its Gorenstein cases. Finally, we show that the graded Betti table of $\mathbb K[\Delta_{\mathbf r}^{\vee}]$ determines the multisets ${n_m}$ and ${r_m}$ up to a common shift.
	\end{abstract}
	
	\maketitle
	
	\section{Introduction}\label{sec:1}
	Stanley--Reisner theory relates the combinatorics of a simplicial complex to the homological properties of its face ring. We study this relation for clique complexes of chordal graphs, their skeletons, and their Alexander duals. If $\Sigma$ is a simplicial complex on a finite vertex set $X$, we write $\SR{\Sigma}=R/I_\Sigma$, where $R=\K[x:x\in X]$. Our main invariants are graded and multigraded Betti numbers, regularity, projective dimension, depth, multiplicity, Cohen--Macaulay type, and the topological information reflected in the resolutions.
	
	The clique complexes of chordal graphs are precisely the quasi-forests; see \cite{HHMTZ08,HHZ04}. Fr\"oberg uses the term fat forests for the same class \cite{Fr1}. Since these complexes are flag, Fr\"oberg's characterization of edge ideals with linear resolutions implies that their Stanley--Reisner ideals have $2$-linear resolutions \cite{Fr}. For fat forests, Fr\"oberg computed the Hilbert series and depth of the full Stanley--Reisner ring and, in the uniform case, the graded Betti numbers of the Alexander dual \cite{Fr1}. He later studied skeletons of thick trees, where each new simplex is attached along a single vertex \cite{Fr2}. In the present paper, the successive intersections are clique separators of arbitrary cardinality, including empty separators.
	
	Betti numbers of skeletons have also been studied without a chordality assumption. Roksvold and Verdure related the Betti numbers of a codimension one skeleton to those of the original complex \cite{RV15}, and formulas for arbitrary skeletons were given in \cite{Namiq2}. Proposition~\ref{prop:1} gives the part of the general skeleton relation needed here, with a direct proof from Hochster's formula and the Hilbert numerator. We combine that relation with the face-number formula from the leaf order in Proposition~\ref{prop:2}. For levelness, we use the known fact that the codimension one skeleton of a Cohen--Macaulay simplicial complex is level, due to Hibi, together with its iterated proper skeleton consequence; see \cite{HerzogHibi99,HibiLevel}. The level property, together with the explicit type and last shift formulas obtained below, yields precise Gorenstein criteria.
	
	For Alexander duals, Herzog, Hibi, and Zheng described relation trees for complementary facet ideals of quasi-forests \cite{HHZ04}, while Faridi and Hersey studied tree-supported resolutions of monomial ideals of projective dimension one \cite{FaridiHersey}. We use this framework in a fixed leaf order so that the multidegrees, total shifts, and matrix entries can be written directly from the facet and separator data.
	
	Let $\Delta_{\mathbf r}=\Delta_{\mathbf r}(n_1,\ldots,n_e)$ be the clique complex of a chordal graph with at least two maximal cliques. Fix a leaf order $S_1,\ldots,S_e$ of its facets and put $n_m=|S_m|$. For $m\ge2$, choose a branch $S_{p(m)}$ of $S_m$ among the preceding facets and set
	\[
	Q_{m-1}=S_m\cap(S_1\cup\cdots\cup S_{m-1})=S_m\cap S_{p(m)},
	\qquad r_{m-1}=|Q_{m-1}|.
	\]
	Thus $N_{\mathbf r}=\sum_{m=1}^e n_m-\sum_{m=1}^{e-1}r_m$ is the number of vertices. General formulas describe how Betti numbers change when an arbitrary simplicial complex is replaced by a skeleton \cite{Namiq2}, while Fr\"oberg's thick tree calculation gives explicit formulas when every attachment is along one vertex \cite{Fr2}. Theorem~\ref{thm:1} gives an explicit form of this relation for chordal clique complexes, where the separators $Q_m$ may have arbitrary cardinalities. Its two nonzero rows have different origins: the linear row is inherited from the $2$-linear resolution of $I_{\Delta_{\mathbf r}}$, whereas the row with $j-i=k+1$ arises from passing to the $k$-skeleton. Thus the homological effect of the skeleton operation is expressed directly in terms of the facet and separator data.
	
	Once these two rows are known, the homological invariants become structural rather than merely numerical. Theorem~\ref{thm:2} shows that the minimum separator size $r_{\min}$ is the threshold determining projective dimension and depth, and Corollaries~\ref{cor:3} and \ref{cor:4} identify exactly where Cohen--Macaulayness and initially Cohen--Macaulayness occur. Theorem~\ref{thm:3} gives a topological interpretation of the same Betti table: every proper skeleton of positive dimension has integral homology concentrated in degrees $0$ and $k$, its top homology is torsion free, and the two extremal Betti numbers correspond to two different parts of the facet and separator data. In the Cohen--Macaulay range this also gives the type and, together with Hibi's general level theorem \cite{HerzogHibi99,HibiLevel}, the Gorenstein consequences. Thus the skeleton formulas connect the separator structure not only with Betti numbers but also with depth, type, and topology. The multiplicity and independence polynomial in Corollaries~\ref{cor:1} and \ref{cor:2} are further consequences of the same face count.
	
	The Alexander dual requires a different argument. Although relation tree resolutions for complementary facet ideals of quasi-forests are known \cite{FaridiHersey,HHZ04}, the Hilbert numerator may contain cancellations between generator and first syzygy contributions when they occur in the same internal degree. For the later calculations it is therefore necessary to determine the actual multidegrees rather than only the Hilbert series. Theorem~\ref{thm:4} gives the minimal resolution in the chosen leaf order, with every shift and differential entry expressed through $S_m$ and $Q_m$. This explicit form shows that the multigraded Betti numbers in homological degree two determine the separators with multiplicity, so the separator multiset is an invariant of the multigraded Stanley--Reisner ring; the same resolution also gives the graded Betti numbers, canonical module, and the level and Gorenstein criteria in Corollaries~\ref{cor:7}--\ref{cor:10}. Corollary~\ref{cor:11} then identifies the homotopy type and integral homology of the Alexander dual itself.
	
	Proper skeletons of the dual provide a second place where the general theory and the chordal structure interact. Their Cohen--Macaulayness and levelness follow from general results on skeletons of Cohen--Macaulay complexes and are therefore used here as known results rather than presented as new conclusions. Theorem~\ref{thm:5} instead gives their complete graded Betti tables in the facet and separator parameters, and Corollaries~\ref{cor:13} and~\ref{cor:14} give the exact type, integral topology, a lower bound for the type and a formula for the last proper skeleton, and the graph-theoretic Gorenstein classification. Theorem~\ref{thm:6} identifies precisely when the dual skeleton coincides with the corresponding skeleton of the full simplex; Corollary~\ref{cor:16} translates this combinatorial threshold into a linear resolution criterion.
	
	Theorem~\ref{thm:7} considers the inverse problem: which facet and separator cardinalities are determined by the resolution of the dual? The graded Betti table determines the facet and separator cardinalities up to one common shift, and it determines both multisets exactly once the number of vertices is fixed. The common shift cannot be determined from the graded Betti table because coning changes $N_{\mathbf r}$, all $n_m$, and all $r_m$ by the same amount while preserving the resolution degrees $N_{\mathbf r}-n_m$ and $N_{\mathbf r}-r_m$. Thus the reconstruction statement is sharp at the level of graded Betti data, while Corollary~\ref{cor:7} shows that the multigraded table determines the finer multigraded data of the actual facets and separators.
	
	Finally, Section~\ref{sec:8} compares the facet and separator Hilbert series with the $f$-vector expansion. This produces a binomial convolution expressed in the facet and separator data. Corollary~\ref{cor:18} is the specialization at the actual vertex count, while Theorem~\ref{thm:8} gives the polynomial form. Since the underlying Chu--Vandermonde convolution is classical, these identities are included as consequences of the main formulas rather than as independent principal results.
	
	The paper is organized as follows. Section~\ref{sec:2} fixes notation and proves the general skeleton relation. Section~\ref{sec:3} develops the leaf order model and the face count. Section~\ref{sec:4} treats the skeletons of $\Delta_{\mathbf r}$. Sections~\ref{sec:5} and \ref{sec:6} study the Alexander dual and its skeletons. Section~\ref{sec:7} gives examples. Section~\ref{sec:8} gives the binomial convolution.
	
	\section{Preliminaries}\label{sec:2}
	
	Let $X=\{x_1,\ldots,x_N\}$ be a finite vertex set, and let $R=\K[x_1,\ldots,x_N]$ be the standard graded polynomial ring over a field $\K$.
	
	\subsection{Simplicial complexes, skeletons, and duality}
	
	A simplicial complex $\Sigma$ on $X$ is a collection of subsets of $X$ that is closed under inclusion. Its elements are called faces, and its inclusion-maximal faces are called facets. The dimension of a face $F$ is $|F|-1$, and $\dim\Sigma$ is the largest dimension of a face. The $k$-skeleton $\Sigma_{(k)}$ consists of all faces of dimension at most $k$. We use the convention $\Sigma_{(-1)}=\{\varnothing\}$; thus vertices outside all nonempty faces are allowed as ghost vertices when a skeleton is viewed on the fixed set $X$.
	
	The Alexander dual of $\Sigma$ on $X$ is
	\[
	\Sigma^\vee=\{F\subseteq X:X\setminus F\notin\Sigma\}.
	\]
	The clique complex of a graph consists of its cliques, and the independence complex consists of its independent vertex sets. A graph is chordal if every cycle of length at least four has a chord; it is co-chordal if its complement is chordal.
	
	The $f$-vector of $\Sigma$ is denoted by $f(\Sigma)=(f_{-1},f_0,\ldots)$, where $f_q$ is the number of $q$-dimensional faces and $f_{-1}=1$.
	
	\subsection{Stanley--Reisner rings and Betti numbers}
	
	The Stanley--Reisner ideal $I_\Sigma\subseteq R$ is generated by the squarefree monomials corresponding to the minimal nonfaces of $\Sigma$, and $\SR{\Sigma}=R/I_\Sigma$. For a graded $R$-module $M$, write $\beta_{i,j}^R(M)$ for its graded Betti numbers. We suppress the superscript $R$ when the polynomial ring is clear, but we always specify the module: in particular, $\beta_{i,j}(I_\Sigma)$ and $\beta_{i,j}(\SR{\Sigma})$ are not interchangeable. We use
	\[
	\pdim M=\max\{i:\beta_{i,j}(M)\ne0\text{ for some }j\},
	\qquad
	\reg M=\max\{j-i:\beta_{i,j}(M)\ne0\}.
	\]
	For $A\subseteq X$, put $x_A=\prod_{x\in A}x$. The squarefree multigraded Betti number in multidegree $x_A$ is denoted $\beta_{i,A}(M)$.
	
	A homogeneous ideal generated in degree $d$ has a $d$-linear resolution if $\beta_{i,j}(I)=0$ whenever $j\ne i+d$. Accordingly, if $I_\Sigma$ has a $d$-linear resolution, the Betti numbers of the quotient $\SR{\Sigma}$ in positive homological degrees lie in degrees $j=i+d-1$.
	
	Hochster's formula will be used in the form
	\[
	\beta_{i,j}(\SR{\Sigma})
	=
	\sum_{\substack{W\subseteq X\\ |W|=j}}
	\dim_{\K}\widetilde H_{j-i-1}(\Sigma_W;\K),
	\]
	where $\Sigma_W$ is the induced subcomplex on $W$ \cite{Ho77}.
	
	\subsection{Hilbert series and Cohen--Macaulay properties}
	
	If $d=\dim\SR{\Sigma}$, then
	\[
	H_{\SR{\Sigma}}(t)
	=
	\sum_{q=0}^{d}\frac{f_{q-1}t^q}{(1-t)^q}
	=
	\frac{\sum_{i,j}(-1)^i\beta_{i,j}(\SR{\Sigma})t^j}{(1-t)^N}
	=
	\frac{h_{\SR{\Sigma}}(t)}{(1-t)^d}.
	\]
	The multiplicity is $e(\SR{\Sigma})=h_{\SR{\Sigma}}(1)$; for a Stanley--Reisner ring, it is the number of facets of maximal dimension. The reduced Euler characteristic is $\widetilde\chi(\Sigma)=\sum_{q\ge-1}(-1)^qf_q$.
	
	A finitely generated graded module $M$ is Cohen--Macaulay if $\operatorname{depth}M=\dim M$. Following \cite{Namiq1}, its initial dimension is
	\[
	\indim M=\min\{\dim R/P:P\in\operatorname{Ass}_R(M)\},
	\]
	and $M$ is initially Cohen--Macaulay if $\operatorname{depth}M=\indim M$. For a Stanley--Reisner ring, the associated primes correspond to the facets, so
	\[
	\indim\SR{\Sigma}=\min\{|F|:F\text{ is a facet of }\Sigma\}.
	\]
	We will not use results from \cite{Namiq1} to verify the calculations below; the initially Cohen--Macaulay criteria will follow directly from this formula and the independently computed depth.
	
	By the Eagon--Reiner theorem, $I_\Sigma$ has a linear resolution if and only if $\SR{\Sigma^\vee}$ is Cohen--Macaulay \cite{ER98}. We also use the Auslander--Buchsbaum identity \cite{AB57}:
	\[
	\operatorname{depth}\SR{\Sigma}=N-\pdim\SR{\Sigma}.
	\]
	
	Throughout the paper,
	\[
	\binom ab=0\qquad\text{if }a<0,\ b<0,\text{ or }b>a,
	\]
	and otherwise $\binom ab$ has its usual combinatorial meaning.
	
	We now isolate the general relation between the Betti numbers of a complex and those of a proper skeleton.
	
	\begin{proposition}\label{prop:1}
		Let $\Sigma$ be a simplicial complex on $N$ vertices, and let $0\le k<\dim\Sigma$. Then the following statements hold.
		\begin{enumerate}
			\item[(i)] If $j<k+1$, then
			\[
			\beta_{i,i+j}(\SR{\Sigma_{(k)}})=\beta_{i,i+j}(\SR{\Sigma}).
			\]
			\item[(ii)] If $j>k+1$, then $\beta_{i,i+j}(\SR{\Sigma_{(k)}})=0$.
			\item[(iii)] For $i\ge1$ and $s=i+k+1$,
			\[
			\beta_{i,s}(\SR{\Sigma_{(k)}})
			=(-1)^i\sum_{q=0}^{k+1}(-1)^{s-q}
			\binom{N-q}{s-q}f_{q-1}(\Sigma)
			-(-1)^i\sum_{i'>i}(-1)^{i'}\beta_{i',s}(\SR{\Sigma}).
			\]
		\end{enumerate}
	\end{proposition}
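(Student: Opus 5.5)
We plan to use Hochster's formula for (i) and (ii), and the Hilbert numerator for (iii). The key observation is that for every $W\subseteq X$ we have $(\Sigma_{(k)})_W=(\Sigma_W)_{(k)}$. Moreover, a skeleton agrees with the original complex in reduced homology below dimension $k$: the chain complexes of $(\Sigma_W)_{(k)}$ and $\Sigma_W$ coincide in degrees $\le k$. Hence $\widetilde H_p$ agrees for $p<k$. Also $\widetilde H_p((\Sigma_W)_{(k)})=0$ for $p>k$, since there are no $p$-chains.

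For (i), with $j<k+1$, Hochster's formula expresses $\beta_{i,i+j}$ through $\widetilde H_{j-1}$ of induced subcomplexes. Since $j-1<k$, these groups are the same for $\Sigma$ and $\Sigma_{(k)}$, which gives equality. For (ii), with $j>k+1$, the relevant degree $j-1>k$ has vanishing homology in the skeleton, so the Betti number is zero. Thus in the Betti table of $\SR{\Sigma_{(k)}}$ only the rows $j\le k+1$ survive, and the rows $j<k+1$ are inherited from $\SR{\Sigma}$.

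For (iii), we compare Hilbert numerators. The faces of $\Sigma_{(k)}$ are the faces of $\Sigma$ of cardinality $q\le k+1$. Writing everything over $(1-t)^N$, the numerator is
\[
\sum_{i,j}(-1)^i\beta_{i,j}(\SR{\Sigma_{(k)}})t^j=\sum_{q=0}^{k+1}f_{q-1}(\Sigma)t^q(1-t)^{N-q}.
\]
The coefficient of $t^s$ on the right is $\sum_{q}(-1)^{s-q}\binom{N-q}{s-q}f_{q-1}(\Sigma)$. On the left, by (ii) the contributions to degree $s=i+k+1$ come from the pairs $(i',s)$ with $s-i'\le k+1$, that is, $i'\ge i$. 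The term $i'=i$ is the unknown $\beta_{i,s}(\SR{\Sigma_{(k)}})$. For $i'>i$ we have $s-i'<k+1$, so by (i) these equal $\beta_{i',s}(\SR{\Sigma})$. Solving for the $i'=i$ term and multiplying by $(-1)^i$ yields the stated formula.

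The hypothesis $i\ge1$ only avoids the degree-zero term $\beta_{0,0}$, which matters only when $s=0$; with $i\ge 1$, $s\ge k+2>0$. The main care point is the bookkeeping in the degree comparison, namely checking that every $i'>i$ contributing in degree $s$ falls in the range of (i). Beyond that step, the argument is routine.
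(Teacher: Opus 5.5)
Your proposal is correct and matches the paper's proof essentially step for step. Parts (i)--(ii) come from Hochster's formula together with $(\Sigma_{(k)})_W=(\Sigma_W)_{(k)}$, which has the same reduced homology as $\Sigma_W$ below degree $k$ and none above it. Part (iii) comes from comparing the coefficient of $t^s$ in the two expressions for the Hilbert numerator, with (ii) killing the terms $i'<i$ and (i) identifying the terms $i'>i$.
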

	
	\begin{proof}
		Fix $W\subseteq X$. Restriction and passage to a skeleton commute, so $(\Sigma_{(k)})_W=(\Sigma_W)_{(k)}$. Passing from $\Sigma_W$ to its $k$-skeleton removes only simplices of dimension greater than $k$. The reduced homology is therefore unchanged in degrees below $k$, while the $k$-skeleton has no reduced homology above degree $k$. Hochster's formula gives parts (i) and (ii).
		
		It remains to determine the row with $s-i=k+1$. Put $s=i+k+1$. From the standard $f$-vector expression for the Hilbert series, the coefficient of $t^s$ in the Hilbert numerator of $\SR{\Sigma_{(k)}}$ is
		\[
		c_s=\sum_{q=0}^{k+1}(-1)^{s-q}\binom{N-q}{s-q}f_{q-1}(\Sigma).
		\]
		The expression of the same coefficient in terms of graded Betti numbers is
		\[
		c_s=\sum_{i'\ge0}(-1)^{i'}\beta_{i',s}(\SR{\Sigma_{(k)}}).
		\]
		If $i'<i$, then $s-i'>k+1$, and part (ii) implies that this term is zero. If $i'>i$, then $s-i'<k+1$, and part (i) identifies the term with $\beta_{i',s}(\SR{\Sigma})$. Thus only the term with $i'=i$ remains unknown. Solving for it gives part (iii).
	\end{proof}
	
	\section{Chordal clique complexes in a leaf order}\label{sec:3}
	
	Let $\Delta_{\mathbf r}(n_1,\ldots,n_e)$ be the clique complex of a chordal graph with at least two maximal cliques. Such a complex is a quasi-forest \cite{HHZ04,HHMTZ08}. Fix a leaf order $S_1,\ldots,S_e$ of its facets and put $n_m=|S_m|$. For each $m=2,\ldots,e$, choose a branch $S_{p(m)}$ of $S_m$, with $p(m)<m$. The leaf property says that
	\[
	S_m\cap S_i\subseteq S_m\cap S_{p(m)}\qquad(i<m),
	\]
	or, equivalently,
	\[
	S_m\cap(S_1\cup\cdots\cup S_{m-1})=S_m\cap S_{p(m)}.
	\]
	For $m=1,\ldots,e-1$, define
	\[
	Q_m=(S_1\cup\cdots\cup S_m)\cap S_{m+1}
	=S_{p(m+1)}\cap S_{m+1},
	\qquad r_m=|Q_m|.
	\]
	Empty separators are allowed. The number of vertices is
	\[
	N_{\mathbf r}=\sum_{m=1}^e n_m-\sum_{m=1}^{e-1}r_m.
	\]
	We also set
	\[
	\begin{aligned}
		r_{\min}&=\min\{r_1,\ldots,r_{e-1}\},
		&\qquad n_{\min}=\min\{n_1,\ldots,n_e\},\\
		n_{\max}&=\max\{n_1,\ldots,n_e\},
		&\qquad d=\dim\Delta_{\mathbf r}=n_{\max}-1.
	\end{aligned}
	\]
	When the facet sizes are understood, we write simply $\Delta_{\mathbf r}$. If $e=1$, then the complex is a simplex and the statements reduce to the standard formulas for skeletons of a simplex. We therefore assume $e\ge2$ throughout the main results.
	Throughout the rest of the paper, unless otherwise stated, the notation $S_m$, $n_m$, $Q_m$, $r_m$, $N_{\mathbf r}$, $r_{\min}$, $n_{\min}$, $n_{\max}$, and $d$ has the meaning specified above.
	
	\begin{remark}\label{rem:1}
		The notation $\Delta_{\mathbf r}(n_1,\ldots,n_e)$ specifies the numerical data associated with a chosen leaf order. It does not determine the simplicial complex up to isomorphism. Different clique trees can have the same multisets of facet and separator sizes; Example~\ref{ex:1} illustrates this point.
	\end{remark}
	
	The face numbers are obtained by counting what each new facet contributes. The corresponding Hilbert series is Fr\"oberg's formula for fat forests \cite{Fr1}.
	
	\begin{proposition}\label{prop:2}
		Let $\Delta_{\mathbf r}=\Delta_{\mathbf r}(n_1,\ldots,n_e)$ be the clique complex of a chordal graph with at least two maximal cliques. Fix a leaf order $S_1,\ldots,S_e$, put $n_m=|S_m|$, and let $r_1,\ldots,r_{e-1}$ be the corresponding separator cardinalities. For every integer $t\ge0$,
		\[
		f_{t-1}(\Delta_{\mathbf r})
		=
		\sum_{m=1}^e\binom{n_m}{t}
		-
		\sum_{m=1}^{e-1}\binom{r_m}{t}.
		\]
		Moreover,
		\[
		H_{\SR{\Delta_{\mathbf r}}}(z)
		=
		\sum_{m=1}^e\frac{1}{(1-z)^{n_m}}
		-
		\sum_{m=1}^{e-1}\frac{1}{(1-z)^{r_m}}.
		\]
	\end{proposition}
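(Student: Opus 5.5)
The plan is to prove the face-count formula by induction on $e$ along the fixed leaf order, and then to obtain the Hilbert series by summing the $f$-vector expression from Section~\ref{sec:2}.

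Since $\Delta_{\mathbf r}$ is the clique complex of a chordal graph, its faces are exactly the subsets of the facets $S_1,\ldots,S_e$. For $m\le e$, let $\Gamma_m$ be the subcomplex generated by $S_1,\ldots,S_m$. Then $\Gamma_m=\Gamma_{m-1}\cup 2^{S_m}$, where $2^{S_m}$ denotes the full simplex on $S_m$. Next I will identify the intersection of the two pieces. A face $F\subseteq S_m$ lies in $\Gamma_{m-1}$ exactly when $F\subseteq S_i$ for some $i<m$. By the leaf property, $S_m\cap S_i\subseteq S_m\cap S_{p(m)}=Q_{m-1}$, so any such $F$ lies in $Q_{m-1}$. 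Conversely, every subset of $Q_{m-1}$ lies in $S_{p(m)}$. Hence
\[
\Gamma_{m-1}\cap 2^{S_m}=2^{Q_{m-1}}.
\]
Inclusion--exclusion on faces of cardinality $t$ then gives
\[
f_{t-1}(\Gamma_m)=f_{t-1}(\Gamma_{m-1})+\binom{n_m}{t}-\binom{r_{m-1}}{t}.
\]
The base case is $f_{t-1}(\Gamma_1)=\binom{n_1}{t}$. Summing the recursion over $m=2,\ldots,e$ yields the stated formula, with the case $t=0$ correctly giving $f_{-1}=e-(e-1)=1$. Empty separators need no special treatment, because $\binom{0}{t}$ is $1$ for $t=0$ and $0$ for $t>0$.

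For the Hilbert series, I will substitute this face count into $H(z)=\sum_{t\ge0}f_{t-1}z^t/(1-z)^t$. The sum may be taken over all $t\ge0$, since terms with $t>n_{\max}$ vanish. The identity that makes everything work is
\[
\sum_{t\ge0}\binom{n}{t}\frac{z^t}{(1-z)^t}=\Bigl(1+\frac{z}{1-z}\Bigr)^n=\frac{1}{(1-z)^n}.
\]
Applying it to each binomial term separately gives the stated sum.

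The only step that needs care is the identification of $\Gamma_{m-1}\cap 2^{S_m}$ with the full simplex on $Q_{m-1}$. It relies on two facts: the leaf inclusion for all $i<m$, and the fact that every face of the clique complex lies in some maximal clique. Everything after that is routine summation.
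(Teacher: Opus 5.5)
Your proof is correct and matches the paper's argument. The paper also adds facets in leaf order, counts each new simplex and subtracts the faces of the common simplex on the separator, and it gets the Hilbert series from the same inclusion--exclusion; your binomial-theorem substitution $\sum_t\binom{n}{t}\bigl(z/(1-z)\bigr)^t=(1-z)^{-n}$ is just that step written out. You also check explicitly that $\Gamma_{m-1}\cap 2^{S_m}=2^{Q_{m-1}}$ using the leaf inclusion, which the paper leaves implicit.
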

	
	\begin{proof}
		Suppose that $S_{m+1}$ is attached to $S_1\cup\cdots\cup S_m$. Every face of the simplex on $S_{m+1}$ enters the count, but the faces of the common simplex $Q_m$ have already been counted. Thus the $m$th attachment changes the number of faces of cardinality $t$ by $\binom{n_{m+1}}t-\binom{r_m}t$. Starting with $S_1$ and summing these changes proves the face formula. The same inclusion--exclusion argument, applied to the Hilbert series of each simplex and each common face, gives the second formula.
	\end{proof}
	
	Two numerical consequences will be used later.
	
	\begin{corollary}\label{cor:1}
		Under the hypotheses and notation of Proposition~\ref{prop:2}, for every $-1\le k<\dim\Delta_{\mathbf r}$, the ring $\SR{(\Delta_{\mathbf r})_{(k)}}$ has Krull dimension $k+1$ and multiplicity
		\[
		e\bigl(\SR{(\Delta_{\mathbf r})_{(k)}}\bigr)
		=
		\sum_{m=1}^e\binom{n_m}{k+1}
		-
		\sum_{m=1}^{e-1}\binom{r_m}{k+1}.
		\]
	\end{corollary}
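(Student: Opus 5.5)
The plan is to read both claims off the Hilbert series of the skeleton, using the face count in Proposition~\ref{prop:2}. The faces of $(\Delta_{\mathbf r})_{(k)}$ are exactly the faces of $\Delta_{\mathbf r}$ of cardinality at most $k+1$. So $f_{t-1}((\Delta_{\mathbf r})_{(k)})=f_{t-1}(\Delta_{\mathbf r})$ for $0\le t\le k+1$, and these numbers vanish for $t>k+1$. The Hilbert series formula from Section~\ref{sec:2} then gives
\[
H_{\SR{(\Delta_{\mathbf r})_{(k)}}}(z)=\sum_{q=0}^{k+1}\frac{f_{q-1}(\Delta_{\mathbf r})z^q}{(1-z)^q}.
\]

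For the Krull dimension, I will identify the order of the pole at $z=1$. It equals the largest $q$ with $f_{q-1}\neq0$, and that $q$ is $k+1$. Indeed, $k<\dim\Delta_{\mathbf r}=n_{\max}-1$ gives $k+1\le n_{\max}$, so some facet has at least $k+1$ vertices and hence contains a $k$-face. Equivalently, the Krull dimension of a Stanley--Reisner ring is $\dim+1$, and $\dim(\Delta_{\mathbf r})_{(k)}=k$. The case $k=-1$ is degenerate: the complex is $\{\varnothing\}$, the ring is $\K$, the dimension is $0$, and $f_{-1}=1$. This agrees with the formula, since $\binom{n_m}{0}=\binom{r_m}{0}=1$ gives $e-(e-1)=1$.

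For the multiplicity, I will multiply the series by $(1-z)^{k+1}$ and evaluate at $z=1$. Only the term $q=k+1$ survives, so the multiplicity is $f_k(\Delta_{\mathbf r})$. This also matches the fact that, for a Stanley--Reisner ring, the multiplicity is the number of facets of maximal dimension; here these are the $k$-faces. Substituting $t=k+1$ into Proposition~\ref{prop:2} then gives the stated binomial expression.

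The main point needing care is the nonvanishing of $f_k(\Delta_{\mathbf r})$, that is, that the alternating expression is positive. This follows from the existence of a facet of size $n_{\max}\ge k+1$. It can also be seen from the formula itself: the sum telescopes by attachments into terms $\binom{n_{m+1}}{k+1}-\binom{r_m}{k+1}\ge0$, since $r_m<n_{m+1}$ for distinct maximal cliques, and the term for $S_1$ is added as well.
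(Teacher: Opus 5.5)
Your proposal is correct and takes essentially the paper's route. Both arguments reduce the multiplicity to $f_k(\Delta_{\mathbf r})$ and then apply Proposition~\ref{prop:2} with $t=k+1$; the paper does this by citing that the multiplicity of a Stanley--Reisner ring counts facets of maximal dimension, while you read off the leading coefficient of the Hilbert series and note the same equivalence. Your explicit handling of $k=-1$ and of the positivity of $f_k(\Delta_{\mathbf r})$ is correct and only makes explicit what the paper leaves implicit.
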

	
	\begin{proof}
		Because $k<\dim\Delta_{\mathbf r}$, the complex contains a face of cardinality $k+1$, so the skeleton has dimension $k$. Its facets of maximal dimension are exactly the $k$-dimensional faces of $\Delta_{\mathbf r}$. The multiplicity of a Stanley--Reisner ring is the number of facets of maximal dimension, each counted once \cite[Section~5.1]{BH98}. Hence the multiplicity is $f_k(\Delta_{\mathbf r})$, and Proposition~\ref{prop:2} with $t=k+1$ gives the formula.
	\end{proof}
	
	\begin{corollary}\label{cor:2}
		Under the hypotheses and notation of Proposition~\ref{prop:2}, let $G$ be the co-chordal graph whose independence complex is $\Delta_{\mathbf r}$. If $i_t$ denotes the number of independent sets of $G$ having cardinality $t$, then
		\[
		I(G,x)=\sum_{t\ge0}i_tx^t
		=
		\sum_{m=1}^e(1+x)^{n_m}
		-
		\sum_{m=1}^{e-1}(1+x)^{r_m}.
		\]
	\end{corollary}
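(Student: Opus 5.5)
The plan is to reduce the statement to the face count of Proposition~\ref{prop:2}, using the standard dictionary between independence complexes and independence polynomials. By definition, $G$ has independence complex $\Delta_{\mathbf r}$. Hence the independent sets of $G$ of cardinality $t$ are exactly the faces of $\Delta_{\mathbf r}$ with $t$ vertices, that is, the faces of dimension $t-1$. This gives
\[
i_t=f_{t-1}(\Delta_{\mathbf r})\qquad(t\ge0),
\]
where the empty set contributes $i_0=f_{-1}=1$.

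First I would pin down $G$ concretely as the complement of the chordal graph whose clique complex is $\Delta_{\mathbf r}$. Cliques of a graph are exactly the independent sets of its complement, so $G$ is co-chordal and its independence complex is $\Delta_{\mathbf r}$. All vertices are genuine vertices of $\Delta_{\mathbf r}$, so no ghost-vertex issue arises.

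Next I would substitute the formula from Proposition~\ref{prop:2} for each $t\ge0$. This gives
\[
I(G,x)=\sum_{t\ge0}\Bigl(\sum_{m=1}^e\binom{n_m}{t}-\sum_{m=1}^{e-1}\binom{r_m}{t}\Bigr)x^t .
\]
Both inner sums are finite for each $t$, and all terms with $t>n_{\max}$ vanish by the binomial convention. So I can interchange the order of summation and apply the binomial theorem $\sum_t\binom{a}{t}x^t=(1+x)^a$ to each $a=n_m$ and $a=r_m$. For an empty separator, $r_m=0$, the contribution is $(1+x)^0=1$, which matches $\binom{0}{0}=1$.

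There is no real obstacle here. The only points needing care are that $t=0$ is included, so the constant terms agree ($e-(e-1)=1$), and that the sum $\sum_{t\ge0}$ is effectively finite. Alternatively, I could note that the identity is the Hilbert series formula of Proposition~\ref{prop:2} under the substitution $z\mapsto x/(1+x)$, since $\sum_q f_{q-1}z^q/(1-z)^q$ becomes $\sum_q f_{q-1}x^q$. I would mention this only as a check.
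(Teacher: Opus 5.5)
Your proposal is correct and follows essentially the same route as the paper: identify $i_t=f_{t-1}(\Delta_{\mathbf r})$, substitute the face formula of Proposition~\ref{prop:2}, and sum over $t$ using the binomial theorem. Your remark that the identity is also the Hilbert series of Proposition~\ref{prop:2} under $z\mapsto x/(1+x)$ is a valid consistency check.
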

	
	\begin{proof}
		Independent sets of cardinality $t$ in $G$ are the faces of $\Delta_{\mathbf r}$ having cardinality $t$. Thus $i_t=f_{t-1}(\Delta_{\mathbf r})$. Summing the formula in Proposition~\ref{prop:2} over $t$ and applying the binomial theorem gives the result.
	\end{proof}
	
	The complex $\Delta_{\mathbf r}$ is flag, and its $1$-skeleton is chordal. Hence $I_{\Delta_{\mathbf r}}$ has a $2$-linear resolution by Fr\"oberg's theorem \cite{Fr}. Therefore the Betti numbers of $\SR{\Delta_{\mathbf r}}$ in positive homological degrees occur only in the linear row $j=i+1$.
	
	\begin{remark}\label{rem:2}
		A thick tree is the special case in which every new simplex is attached along one vertex. In the present notation, this means $r_1=\cdots=r_{e-1}=1$, with $n_m\ge2$. Thus the skeleton formulas below specialize to the formulas in \cite{Fr2}, while the present setting permits clique separators of arbitrary cardinality, including empty separators.
	\end{remark}
	
	\section{Skeletons of the chordal clique complex}\label{sec:4}
	
	The following elementary convolution simplifies the linear row.
	
	\begin{lemma}\label{lem:1}
		Let $n,s,A$ be nonnegative integers with $0\le A\le n$. Then
		\[
		\sum_{t=0}^s(-1)^t\binom At\binom{n-t}{s-t}
		=
		\binom{n-A}{s}.
		\]
	\end{lemma}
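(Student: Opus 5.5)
We will prove the identity by a generating-function argument, or equivalently by a combinatorial inclusion--exclusion count; we describe the counting version and use the generating function as a check.

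\emph{Combinatorial interpretation.} Fix an $n$-element set $Y$ and a subset $B\subseteq Y$ with $|B|=A$, which is possible because $0\le A\le n$. The right-hand side $\binom{n-A}{s}$ counts the $s$-subsets of $Y$ that avoid $B$. For each $T\subseteq B$ with $|T|=t$, the number of $s$-subsets of $Y$ containing $T$ is $\binom{n-t}{s-t}$. This also holds when $t>s$, where both sides are zero under the stated convention.

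\emph{Inclusion--exclusion.} By inclusion--exclusion over the elements of $B$,
\[
\#\{U\subseteq Y:|U|=s,\ U\cap B=\varnothing\}=\sum_{T\subseteq B}(-1)^{|T|}\,\#\{U:|U|=s,\ T\subseteq U\}.
\]
Grouping the terms by $t=|T|$ gives $\sum_{t}(-1)^t\binom At\binom{n-t}{s-t}$. Every term with $t>s$ or $t>A$ vanishes, so the sum may be truncated at $t=s$, which is the stated sum.

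\emph{Algebraic check.} Alternatively, $\binom{n-t}{s-t}$ is the coefficient of $x^s$ in $x^t(1+x)^{n-t}$. The left-hand side is then the coefficient of $x^s$ in
\[
(1+x)^{n}\sum_t\binom At\Bigl(\frac{-x}{1+x}\Bigr)^t=(1+x)^{n-A}.
\]
This is valid because $n-A\ge0$, so no negative powers of $1+x$ appear.

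The only delicate point is the boundary convention $\binom ab=0$ for $b<0$ or $b>a$. When $t>s$, the factor $\binom{n-t}{s-t}$ has a negative lower index and so vanishes, and it should not be read as a generalized binomial. This matches the set count, so the truncation is harmless. The hypothesis $A\le n$ guarantees that $n-t\ge0$ for every $t\le A$, which keeps the counting interpretation valid.
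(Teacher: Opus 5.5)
Your proof is correct. Your ``algebraic check'' is exactly the paper's proof: the paper extracts the coefficient $[y^s]$ from $(1+y)^n\sum_{t}\binom At\bigl(\frac{-y}{1+y}\bigr)^t=(1+y)^{n-A}$. On its own, that part would coincide with the paper's argument.

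Your main argument is a genuinely different route: inclusion--exclusion over a fixed $A$-subset $B$ of an $n$-set, showing that both sides count the $s$-subsets avoiding $B$. Its advantage is that it explains the identity rather than just verifying it. It also makes the hypotheses transparent. The condition $A\le n$ is exactly what lets $B$ sit inside the $n$-set. Each vanishing term under the convention $\binom ab=0$ (for $t>s$, for $t>A$, or for $s>n$) corresponds to an empty set count.

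The generating-function route is shorter, and it does not really need $n$ to be an integer. If you replace $(1+y)^n$ by the formal series $(1+y)^x=\sum_j\binom xj y^j$, the same manipulation proves the polynomial identity of Lemma~\ref{lem:3} directly. The counting argument gives only the integer values $n\ge A$, so it still needs the paper's interpolation step to reach that polynomial form.
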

	
	\begin{proof}
		Taking the coefficient of $y^s$, we obtain
		\begin{align*}
			\sum_{t=0}^s(-1)^t\binom At\binom{n-t}{s-t}
			&=[y^s](1+y)^n
			\sum_{t=0}^{A}\binom At\left(\frac{-y}{1+y}\right)^t\\
			&=[y^s](1+y)^n\left(1-\frac{y}{1+y}\right)^A\\
			&=[y^s](1+y)^{n-A}
			=\binom{n-A}{s}.
		\end{align*}
	\end{proof}
	
	For a proper skeleton of positive dimension, Proposition~\ref{prop:1} leaves only two possible rows: the linear row $j=i+1$ inherited from the full complex and the row $j=i+k+1$ arising from passage to the $k$-skeleton. When $k=0$, these rows coincide.
	
	\begin{theorem}\label{thm:1}
		Let $\Delta_{\mathbf r}=\Delta_{\mathbf r}(n_1,\ldots,n_e)$ be the clique complex of a chordal graph with at least two maximal cliques. Fix a leaf order $S_1,\ldots,S_e$ of its facets, put $n_m=|S_m|$, and let $r_1,\ldots,r_{e-1}$ be the corresponding separator cardinalities.
		\begin{enumerate}
			\item[(a)] For the $(-1)$-skeleton,
			\[
			\beta_{i,i}\bigl(\SR{(\Delta_{\mathbf r})_{(-1)}}\bigr)=\binom{N_{\mathbf r}}i
			\qquad(0\le i\le N_{\mathbf r}),
			\]
			and all other graded Betti numbers vanish.
			
			\item[(b)] For the $0$-skeleton,
			\[
			\beta_{i,i+1}\bigl(\SR{(\Delta_{\mathbf r})_{(0)}}\bigr)
			=i\binom{N_{\mathbf r}}{i+1}
			\qquad(1\le i\le N_{\mathbf r}-1),
			\]
			and all other Betti numbers in positive homological degrees vanish.
			
			\item[(c)] Let $1\le k<d$. For every $i\ge1$, the only possible nonzero Betti numbers in positive homological degrees are
			\[
			\beta_{i,i+1}\bigl(\SR{(\Delta_{\mathbf r})_{(k)}}\bigr)
			=
			-\sum_{m=1}^e\binom{N_{\mathbf r}-n_m}{i+1}
			+\sum_{m=1}^{e-1}\binom{N_{\mathbf r}-r_m}{i+1}
			\]
			and
			\[
			\beta_{i,i+k+1}\bigl(\SR{(\Delta_{\mathbf r})_{(k)}}\bigr)
			=
			\sum_{t=k+2}^{i+k+1}
			(-1)^{k-t}
			\binom{N_{\mathbf r}-t}{i+k+1-t}
			\left(
			\sum_{m=1}^e\binom{n_m}{t}
			-
			\sum_{m=1}^{e-1}\binom{r_m}{t}
			\right).
			\]
			
			\item[(d)] For $k=d$, the skeleton is the full complex. Its Betti numbers in positive homological degrees are given by the linear row formula in part~(c), and all other Betti numbers in positive homological degrees vanish.
		\end{enumerate}
	\end{theorem}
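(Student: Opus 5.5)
We plan to treat the four parts separately, handling the degenerate skeletons (a), (b) directly and using Proposition~\ref{prop:1} together with Proposition~\ref{prop:2} and Lemma~\ref{lem:1} for (c) and (d).

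For (a), the $(-1)$-skeleton is $\{\varnothing\}$ on $N_{\mathbf r}$ vertices. Its Stanley--Reisner ideal is the maximal ideal $(x_1,\ldots,x_{N_{\mathbf r}})$, and the Koszul complex gives $\beta_{i,i}=\binom{N_{\mathbf r}}{i}$.

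For (b), the $0$-skeleton is $N_{\mathbf r}$ isolated points. Its ideal is generated by all squarefree quadrics and has a $2$-linear resolution; for instance, Hochster's formula applies since every induced subcomplex on $W$ is discrete. Then $\beta_{i,i+1}=\sum_{|W|=i+1}\dim\widetilde H_0=(i+1-1)\binom{N_{\mathbf r}}{i+1}=i\binom{N_{\mathbf r}}{i+1}$.

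For (d), $k=d$ gives $\Delta_{\mathbf r}$ itself. The $2$-linear resolution from Fr\"oberg's theorem means only the linear row survives, so it suffices to compute that row. Since the quotient has a linear resolution, the Hilbert numerator determines each $\beta_{i,i+1}$ through $(-1)^{i}\beta_{i,i+1}=[t^{i+1}]\bigl(\text{numerator}\bigr)$, as different $i$ occupy different degrees. By Proposition~\ref{prop:2}, the numerator is
\[
(1-t)^{N_{\mathbf r}}H(t)=\sum_m(1-t)^{N_{\mathbf r}-n_m}-\sum_m(1-t)^{N_{\mathbf r}-r_m}.
\]
Extracting the coefficient of $t^{i+1}$ yields $(-1)^{i+1}\bigl[\sum_m\binom{N_{\mathbf r}-n_m}{i+1}-\sum_m\binom{N_{\mathbf r}-r_m}{i+1}\bigr]$, which gives the stated linear formula. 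Alternatively, one can feed $f_{t-1}$ into $\sum_t(-1)^{s-t}\binom{N-t}{s-t}f_{t-1}$ and use Lemma~\ref{lem:1} with $A=n_m$ or $A=r_m$ and $n=N_{\mathbf r}$; this route is the one reused in (c).

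For (c), with $1\le k<d$, Proposition~\ref{prop:1}(i),(ii) shows that the only nonzero rows are $j-i\le k+1$. Those with $j-i<k+1$ coincide with the rows of $\SR{\Delta_{\mathbf r}}$, hence only $j-i=1$ survives, with the formula from (d). For the row $s=i+k+1$ we apply Proposition~\ref{prop:1}(iii). The correction term $\sum_{i'>i}(-1)^{i'}\beta_{i',s}(\SR{\Delta_{\mathbf r}})$ is nonzero only if $s=i'+1$, that is, $i'=i+k$, which exceeds $i$ because $k\ge1$. So the correction equals $(-1)^{i+k}\beta_{i+k,i+k+1}(\SR{\Delta_{\mathbf r}})$, which by the linear-row computation equals $\sum_{t=0}^{s}(-1)^{s-t}\binom{N-t}{s-t}f_{t-1}$. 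Here the full sum over $t$ is the whole Hilbert-numerator coefficient $c_s$ of $\SR{\Delta_{\mathbf r}}$; we use $i+k\ge1$ and that only index $i+k$ contributes to degree $s$ apart from $i'=0$, which vanishes since $s\ge2$. Subtracting this full sum from the truncated sum $\sum_{q=0}^{k+1}$ leaves $-\sum_{t=k+2}^{s}(-1)^{s-t}\binom{N-t}{s-t}f_{t-1}$. Multiplying by $(-1)^i$, the sign becomes $(-1)^{i+1+s-t}=(-1)^{k-t}$ modulo $2$. Substituting $f_{t-1}$ from Proposition~\ref{prop:2} gives the stated formula, where terms with $t>n_{\max}$ vanish automatically.

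The main obstacle is the sign and index bookkeeping in this last step: one must verify that $c_s$ for the full complex equals exactly $(-1)^{i+k}\beta_{i+k,s}$, with no other contributions, and track the signs correctly.
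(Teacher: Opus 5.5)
Your proposal is correct and follows essentially the same route as the paper: the Koszul complex for (a), Hochster's formula for (b), Fr\"oberg's $2$-linearity with Proposition~\ref{prop:1}(i)--(ii) to isolate the two rows, the Hilbert numerator for the linear row, and Proposition~\ref{prop:1}(iii) for the row $j=i+k+1$, where the correction term is identified with the full complex's numerator coefficient at $t^{i+k+1}$. The only minor differences are these: you read the linear row directly off the facet--separator numerator $\sum_m(1-t)^{N_{\mathbf r}-n_m}-\sum_m(1-t)^{N_{\mathbf r}-r_m}$ rather than going through the $f$-vector and Lemma~\ref{lem:1}, and you spell out the cancellation that the paper states only tersely (only $i'=i+k>i$ contributes, using $k\ge1$).
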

	
	\begin{proof}
		Part~(a) follows from $\SR{(\Delta_{\mathbf r})_{(-1)}}\cong\K$: as an $R$-module, $\K$ has the Koszul resolution.
		
		For part~(b), take $W\subseteq X$ with $|W|=i+1$. The induced subcomplex $((\Delta_{\mathbf r})_{(0)})_W$ consists of $i+1$ isolated vertices, so its reduced zeroth homology has dimension $i$. Hochster's formula gives $\beta_{i,i+1}=i\binom{N_{\mathbf r}}{i+1}$.
		
		Now assume $1\le k<d$. Proposition~\ref{prop:1} says that every row $\beta_{i,i+j}$ with $j<k+1$ agrees with the corresponding row of the full complex and that every row with $j>k+1$ vanishes. Since $I_{\Delta_{\mathbf r}}$ has a $2$-linear resolution, $\SR{\Delta_{\mathbf r}}$ has only the linear row $j=i+1$. Hence the $k$-skeleton can have only the linear row and the row $j=i+k+1$.
		
		To compute the linear row, take the coefficient of $z^{i+1}$ in the Hilbert numerator of the full complex. Proposition~\ref{prop:2} gives
		\[
		\beta_{i,i+1}(\SR{\Delta_{\mathbf r}})
		=
		\sum_{t=0}^{i+1}(-1)^{1-t}
		\binom{N_{\mathbf r}-t}{i+1-t}
		\left(
		\sum_{m=1}^e\binom{n_m}{t}
		-
		\sum_{m=1}^{e-1}\binom{r_m}{t}
		\right).
		\]
		Apply Lemma~\ref{lem:1}, first with $A=n_m$ and then with $A=r_m$. This yields the linear row formula in part~(c), and Proposition~\ref{prop:1}(i) transfers it to the $k$-skeleton.
		
		For the row $j=i+k+1$, compare the coefficient of $z^{i+k+1}$ in the Hilbert numerators of the $k$-skeleton and the full complex. Their face contributions through cardinality $k+1$ cancel. The remaining terms from the full complex have cardinality at least $k+2$, and Proposition~\ref{prop:1}(iii) gives
		\[
		\beta_{i,i+k+1}\bigl(\SR{(\Delta_{\mathbf r})_{(k)}}\bigr)
		=
		\sum_{t=k+2}^{i+k+1}(-1)^{k-t}
		\binom{N_{\mathbf r}-t}{i+k+1-t}f_{t-1}(\Delta_{\mathbf r}).
		\]
		Substituting Proposition~\ref{prop:2} gives the stated formula. Finally, when $k=d$, the skeleton is $\Delta_{\mathbf r}$ itself, and only the linear row remains.
	\end{proof}
	
	\begin{remark}\label{rem:3}
		The separate statement for $k=0$ is essential. In part~(c), the linear row has degree shift $1$ and the additional row has degree shift $k+1$; they are distinct only when $k\ge1$. For example, if two triangles are glued along an edge, then $N_{\mathbf r}=4$, and the zero-dimensional skeleton satisfies $\beta_{1,2}=\binom42=6$, as required by Theorem~\ref{thm:1}(b).
	\end{remark}
	
	The preceding formulas determine the standard homological invariants.
	
	\begin{theorem}\label{thm:2}
		Let $\Delta_{\mathbf r}=\Delta_{\mathbf r}(n_1,\ldots,n_e)$ be the clique complex of a chordal graph with at least two maximal cliques $S_1,\ldots,S_e$ in a fixed leaf order. Put $n_m=|S_m|$, let $r_1,\ldots,r_{e-1}$ be the corresponding separator cardinalities.
		Let $-1\le k\le d$. Then
		\[
		\reg\SR{(\Delta_{\mathbf r})_{(k)}}
		=
		\begin{cases}
			k+1,&-1\le k<d,\\
			1,&k=d,
		\end{cases}
		\]
		and
		\[
		\pdim\SR{(\Delta_{\mathbf r})_{(k)}}
		=
		\begin{cases}
			N_{\mathbf r}-k-1,&-1\le k\le r_{\min},\\
			N_{\mathbf r}-r_{\min}-1,&r_{\min}<k\le d.
		\end{cases}
		\]
		Consequently,
		\[
		\operatorname{depth}\SR{(\Delta_{\mathbf r})_{(k)}}
		=
		\begin{cases}
			k+1,&-1\le k\le r_{\min},\\
			r_{\min}+1,&r_{\min}<k\le d.
		\end{cases}
		\]
	\end{theorem}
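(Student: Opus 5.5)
The plan is to read everything off the two-row Betti table of Theorem~\ref{thm:1}, using Hochster's formula to decide which entries are nonzero; the closed binomial formulas are not needed. The depth statement will then follow from the projective dimension by the Auslander--Buchsbaum identity with $N=N_{\mathbf r}$.

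\emph{Regularity.} For $k=-1$ the ring is $\K$, so the regularity is $0=k+1$. For $0\le k<d$, Proposition~\ref{prop:1}(ii) gives $\reg\le k+1$. For the reverse inequality we need one nonzero entry in row $k+1$.
\begin{itemize}
\item Since $k+1<n_{\max}$, pick a face $W$ of $\Delta_{\mathbf r}$ with $|W|=k+2$.
\item The induced subcomplex $((\Delta_{\mathbf r})_{(k)})_W$ is the boundary of a $(k+1)$-simplex, so its $\widetilde H_k$ is nonzero.
\item Hochster's formula then gives $\beta_{1,k+2}\neq0$.
\end{itemize}
For $k=d$ the resolution is linear by Fr\"oberg's theorem. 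Moreover $I_{\Delta_{\mathbf r}}\neq0$ because $e\ge2$, so the regularity is $1$.

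\emph{Projective dimension.} Take $0\le k\le d$ and bound each row separately.

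In row $k+1$ (present only when $k<d$), we always have $j=i+k+1\le N_{\mathbf r}$, so $i\le N_{\mathbf r}-k-1$. The extreme value is attained:
\begin{itemize}
\item take $W=X$;
\item $\widetilde H_k((\Delta_{\mathbf r})_{(k)})\neq0$, because the boundary of a $(k+1)$-face is a $k$-cycle;
\item there are no $(k+1)$-chains in the skeleton, so this cycle is not a boundary.
\end{itemize}

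In the linear row, Hochster's formula shows that $\beta_{i,i+1}\neq0$ if and only if some induced subcomplex $(\Delta_{\mathbf r})_W$ with $|W|=i+1$ is disconnected. This uses that restricting to the $k$-skeleton with $k\ge1$ does not change $\widetilde H_0$; for $k=0$ the two rows coincide anyway. So we need the largest $W$ inducing a disconnected subgraph of the chordal graph $G$, and I will show that it has size $N_{\mathbf r}-r_{\min}$.
\begin{itemize}
\item \emph{Lower bound.} Take $m$ with $r_m=r_{\min}$ and put $W=X\setminus Q_m$. The set $S_{m+1}\setminus Q_m$ is nonempty since $S_{m+1}$ is a facet distinct from $S_{p(m+1)}$. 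Likewise $(S_1\cup\cdots\cup S_m)\setminus Q_m$ is nonempty. No edge joins these two parts, because the leaf property confines all neighbors of $S_{m+1}\setminus Q_m$ among the earlier vertices to $Q_m$.
\item \emph{Upper bound.} If $G_W$ is disconnected, then $X\setminus W$ contains a minimal vertex separator of $G$. For a chordal graph, every minimal separator is the intersection of two adjacent cliques in a clique tree. In the leaf order, each such intersection is some $Q_m$. Hence $|X\setminus W|\ge r_{\min}$.
\end{itemize}
This gives linear-row entries exactly up to $i=N_{\mathbf r}-r_{\min}-1$.

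Combining the two rows, $\pdim=\max(N_{\mathbf r}-k-1,\,N_{\mathbf r}-r_{\min}-1)$ for $0\le k<d$, which reproduces the two cases of the statement. For $k=d$ only the linear row survives, so $\pdim=N_{\mathbf r}-r_{\min}-1$. This agrees with the formula even if $r_{\min}=d$, when both cases give the same value. For $k=-1$, the Koszul resolution gives $N_{\mathbf r}$.

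The main obstacle is the upper bound in the linear row, namely that every disconnecting set contains some $Q_m$. I would prove it directly by induction along the leaf order rather than cite Dirac's theorem on minimal separators. Suppose $G_W$ is disconnected and $X\setminus W$ contains no $Q_m$. Removing the last leaf $S_e$, either its private vertices are missing from $W$, or they connect to $W\cap Q_{e-1}$, which is nonempty when $r_{e-1}>0$. This reduces to fewer cliques, but the empty-separator case must be handled separately.
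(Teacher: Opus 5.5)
Your regularity argument is the paper's. For the projective dimension you take a different route. The paper handles $k\le r_{\min}$ with Fr\"oberg's formula $\operatorname{depth}\SR{\Delta_{\mathbf r}}=r_{\min}+1$, the characterization of depth through Cohen--Macaulay skeletons, and Auslander--Buchsbaum. It handles $k>r_{\min}$ with the closed linear-row formula of Theorem~\ref{thm:1}(c). You instead read both rows off Hochster's formula. That is more self-contained, and at $k=d$ it would reprove Fr\"oberg's depth formula. Your treatment of the row $j=i+k+1$ is correct.

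The lower bound in the linear row, however, has a genuine gap. For $W=X\setminus Q_m$ you only show that no edge joins $S_{m+1}\setminus Q_m$ to $(S_1\cup\cdots\cup S_m)\setminus Q_m$. Even that appeal to the leaf property at step $m+1$ is incomplete, since an edge could a priori lie in a later facet. More seriously, $W$ also contains every vertex of $S_{m+2},\ldots,S_e$ outside $Q_m$. Ruling out direct edges says nothing about a path between the two parts through those later vertices. So $G_W$ being disconnected follows only when $m+1=e$, and that cannot be arranged in general. For the facets $\{1,2,3\},\{2,3,4\},\{4,5,6\},\{5,6,7\}$, the unique minimum separator $\{4\}$ is never the last separator of any leaf order.

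The missing ingredient is the running intersection property of the relation tree. If $v\in S_j\cap S_i$ with $i<j$, then $v\in S_{p(j)}$, so by induction the facets containing any fixed vertex form a subtree. Delete the tree edge $\{S_{m+1},S_{p(m+1)}\}$. The two remaining subtrees have vertex unions $U'\supseteq S_{p(m+1)}$ and $U''\supseteq S_{m+1}$, and these satisfy $U'\cup U''=X$ and $U'\cap U''=Q_m$. Every edge of $G$ lies in a facet, hence inside $U'$ or inside $U''$. Both $U'\setminus Q_m$ and $U''\setminus Q_m$ are nonempty, so $G_{X\setminus Q_m}$ is disconnected.

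The same property is also what your first version of the upper bound tacitly needs: citing the clique-tree theorem requires the relation tree to be a clique tree. Your inductive version of the upper bound does go through as sketched. The hypothesis that no $Q_m$ lies in $X\setminus W$ already excludes empty separators and forces $W\cap Q_{e-1}\neq\varnothing$.

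Alternatively, you can skip the separator analysis, as the paper does. In Theorem~\ref{thm:1}(c), at $i=N_{\mathbf r}-r_{\min}-1$ every facet term vanishes because $n_m\ge r_{\min}+1$. The separator terms there sum to $\mu=|\{m:r_m=r_{\min}\}|\ge1$, and for larger $i$ every binomial coefficient is zero. This gives both bounds in the linear row at once.
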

	
	\begin{proof}
		For $k=-1$, Theorem~\ref{thm:1}(a) gives regularity $0$. Let $0\le k<d$. Proposition~\ref{prop:1} implies that every row of degree shift greater than $k+1$ vanishes. Choose a face $F$ of $\Delta_{\mathbf r}$ with $|F|=k+2$. The induced subcomplex of the $k$-skeleton on $F$ is the boundary of a $(k+1)$-simplex, so Hochster's formula gives $\beta_{1,k+2}\ne0$. Thus the regularity is exactly $k+1$. For $k=d$, the ideal $I_{\Delta_{\mathbf r}}$ has a $2$-linear resolution, and the quotient has regularity $1$.
		
		Fr\"oberg's depth formula for fat forests gives $\operatorname{depth}\SR{\Delta_{\mathbf r}}=r_{\min}+1$ \cite{Fr1}. For every simplicial complex $\Sigma$,
		\[
		\operatorname{depth}\SR{\Sigma}
		=1+\max\{q:\SR{\Sigma_{(q)}}\text{ is Cohen--Macaulay}\};
		\]
		see \cite[Exercise~5.1.23]{BH98} and \cite{RV15}. It follows that $\SR{(\Delta_{\mathbf r})_{(k)}}$ is Cohen--Macaulay for $k\le r_{\min}$. Its dimension is $k+1$, so Auslander--Buchsbaum gives $\pdim=N_{\mathbf r}-k-1$ in this range.
		
		Suppose now that $k>r_{\min}$. The row $j=i+k+1$ in Theorem~\ref{thm:1} can occur only when $i+k+1\le N_{\mathbf r}$, and hence only in homological degrees $i\le N_{\mathbf r}-k-1<N_{\mathbf r}-r_{\min}-1$. We show that the linear row is nonzero at $i=N_{\mathbf r}-r_{\min}-1$.
		
		Every facet is incident with an edge of the relation tree because $e\ge2$. The separator on such an edge is a proper subset of that facet. Hence $r_{\min}\le n_m-1$ for every $m$. At $i=N_{\mathbf r}-r_{\min}-1$, all terms $\binom{N_{\mathbf r}-n_m}{i+1}$ vanish, and a separator contributes $1$ precisely when its cardinality is $r_{\min}$. The corresponding Betti number in the linear row is therefore the positive number of separators of minimum size. Thus the projective dimension is $N_{\mathbf r}-r_{\min}-1$. Auslander--Buchsbaum gives the depth formula.
	\end{proof}
	
	We use the standard terminology that a Betti number $\beta_{i,i+j}(M)$ is \emph{extremal} if
	\[
	\beta_{p,p+q}(M)=0
	\quad\text{for all }p\ge i,\ q\ge j,
	\quad (p,q)\ne(i,j).
	\]
	The two-row form in Theorem~\ref{thm:1} identifies these extremal positions explicitly.
	
	\begin{theorem}\label{thm:3}
		Let $\Delta_{\mathbf r}=\Delta_{\mathbf r}(n_1,\ldots,n_e)$ be the clique complex of a chordal graph with at least two maximal cliques $S_1,\ldots,S_e$ in a fixed leaf order. Put $n_m=|S_m|$, let $r_1,\ldots,r_{e-1}$ be the corresponding separator cardinalities. Let $c$ be the number of connected components of $\Delta_{\mathbf r}$. Every connected component of $\Delta_{\mathbf r}$ is contractible. For $1\le k<d$, put
		\[
		\lambda_k=
		\sum_{m=1}^{e}\binom{n_m-1}{k+1}
		-
		\sum_{m=1}^{e-1}\binom{r_m-1}{k+1},
		\qquad
		\mu=\bigl|\{m:r_m=r_{\min}\}\bigr|.
		\]
		Then the following statements hold.
		\begin{enumerate}
			\item[(i)] Each connected component of $(\Delta_{\mathbf r})_{(k)}$ is homotopy equivalent to a wedge of $k$-spheres, and the total number of spheres over all components is $\lambda_k$. Consequently,
			\[
			\widetilde H_q((\Delta_{\mathbf r})_{(k)};\mathbb Z)
			\cong
			\begin{cases}
				\mathbb Z^{c-1},&q=0,\\
				\mathbb Z^{\lambda_k},&q=k,\\
				0,&\text{otherwise}.
			\end{cases}
			\]
			In particular, the integral homology of every proper skeleton of positive dimension is torsion free.
			
			\item[(ii)] The last nonzero Betti numbers in the two rows are
			\[
			\beta_{N_{\mathbf r}-k-1,N_{\mathbf r}}
			\bigl(\SR{(\Delta_{\mathbf r})_{(k)}}\bigr)=\lambda_k,\quad \beta_{N_{\mathbf r}-r_{\min}-1,\,N_{\mathbf r}-r_{\min}}
			\bigl(\SR{(\Delta_{\mathbf r})_{(k)}}\bigr)=\mu.
			\]
			If $k\le r_{\min}$, the first of these is the unique extremal Betti number. If $k>r_{\min}$, these two Betti numbers are exactly the two extremal Betti numbers.
			
			\item[(iii)] If $k\le r_{\min}$, then $\SR{(\Delta_{\mathbf r})_{(k)}}$ is Cohen--Macaulay and
			\[
			\operatorname{type}\SR{(\Delta_{\mathbf r})_{(k)}}
			=
			\begin{cases}
				\lambda_k,&k<r_{\min},\\
				\lambda_k+\mu,&k=r_{\min}.
			\end{cases}
			\]
			For $1\le k<d$, the ring $\SR{(\Delta_{\mathbf r})_{(k)}}$ is level if and only if $k<r_{\min}$. If $d\ge1$, the $0$-skeleton is level of type $N_{\mathbf r}-1$. No proper skeleton of $\Delta_{\mathbf r}$ with nonnegative dimension is Gorenstein.
		\end{enumerate}
	\end{theorem}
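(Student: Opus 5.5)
The plan is to treat the topology first and then read (ii) and (iii) off Hochster's formula, Theorem~\ref{thm:1} and Theorem~\ref{thm:2}.

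\textbf{Contractibility and the wedge decomposition.} Contractibility of the components follows by induction along the leaf order. Attaching $S_{m+1}$ along $Q_m\neq\varnothing$ glues a simplex to a contractible complex along a simplex, so the result is contractible by the gluing lemma. If $Q_m=\varnothing$, a new contractible component appears. Hence $c-1$ equals the number of empty separators.

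For (i), let $\Gamma$ be a component and $1\le k$. By cellular approximation, $\Gamma_{(k)}$ has the same $\pi_q$ as $\Gamma$ for $q<k$, so it is $(k-1)$-connected. Since it is $k$-dimensional, $H_k(\Gamma_{(k)};\mathbb Z)$ equals the group of $k$-cycles and is free. For $k=1$, a connected graph is a wedge of circles. For $k\ge2$, Hurewicz gives a map from a wedge of $k$-spheres inducing an isomorphism on $H_k$, and Whitehead makes it a homotopy equivalence.

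\textbf{Counting the spheres.} The number of spheres comes from the reduced Euler characteristic:
\[
\widetilde\chi\bigl((\Delta_{\mathbf r})_{(k)}\bigr)=(c-1)+(-1)^k\cdot\#\text{spheres}.
\]
Compute the left side from Proposition~\ref{prop:2} using
\[
\sum_{t=0}^{k+1}(-1)^t\binom nt=(-1)^{k+1}\binom{n-1}{k+1}\qquad(n\ge1).
\]
The subtle point is that for $n=0$ the alternating sum equals $1$, whereas our convention sets $\binom{-1}{k+1}=0$. The empty separators therefore contribute an extra $c-1$, which exactly cancels the $H_0$ term. What remains is $\lambda_k$. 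I expect this bookkeeping, together with the care needed in the $k=1$ case, to be the main obstacle.

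\textbf{Part (ii).} Hochster's formula at $W=X$, $|W|=N_{\mathbf r}$, gives
\[
\beta_{N_{\mathbf r}-k-1,N_{\mathbf r}}=\dim\widetilde H_k=\lambda_k .
\]
Moreover $\lambda_k>0$ for $k<d$: a $(k+1)$-face $F$ exists, and the boundary of $F$ is a nonzero $k$-cycle in a $k$-dimensional complex. The linear-row value $\mu$ at $i=N_{\mathbf r}-r_{\min}-1$ was computed in the proof of Theorem~\ref{thm:2}. For $i+1>N_{\mathbf r}-r_{\min}$ every binomial in the linear-row formula vanishes, and in the second row nothing occurs beyond $i=N_{\mathbf r}-k-1$.

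Extremality then follows from the two-row shape.
\begin{itemize}
\item If $k\le r_{\min}$, then $N_{\mathbf r}-r_{\min}-1\le N_{\mathbf r}-k-1$. The $(k+1)$-row entry at $i=N_{\mathbf r}-k-1$ lies weakly southeast of every linear entry, so it alone is extremal.
\item If $k>r_{\min}$, the linear row extends strictly further to the right. Both corner entries are extremal, and no other entry is.
\end{itemize}

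\textbf{Part (iii).} Cohen--Macaulayness for $k\le r_{\min}$ is Theorem~\ref{thm:2}. The type is the total Betti number at $i=\pdim=N_{\mathbf r}-k-1$.
\begin{itemize}
\item For $k<r_{\min}$ the linear row vanishes there, so the type is $\lambda_k$.
\item For $k=r_{\min}$ both rows contribute, so the type is $\lambda_k+\mu$.
\end{itemize}
Levelness means the last module is concentrated in one degree. This holds for $k<r_{\min}$. It fails for $k=r_{\min}$ because $\lambda_k,\mu>0$ occupy two degrees. For $k>r_{\min}$ the ring is not Cohen--Macaulay, so it is not level. The $0$-skeleton is treated by Theorem~\ref{thm:1}(b): at $i=N_{\mathbf r}-1$ the last module has rank $N_{\mathbf r}-1$ in the single degree $N_{\mathbf r}$.

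\textbf{Gorenstein property.} I show the type is always at least $2$.
\begin{itemize}
\item For $k=0$: $e\ge2$ and $d\ge1$ force $N_{\mathbf r}\ge3$, so the type $N_{\mathbf r}-1\ge2$.
\item For $k=r_{\min}$: the type $\lambda_k+\mu\ge2$.
\item For $1\le k<r_{\min}$: every $n_m\ge r_{\min}+1\ge k+2$. Write
\[
\lambda_k=\binom{n_1-1}{k+1}+\sum_{m\ge2}\Bigl(\binom{n_m-1}{k+1}-\binom{r_{m-1}-1}{k+1}\Bigr),
\]
where each summand is nonnegative and the first two are at least $1$. Hence $\lambda_k\ge2$.
\end{itemize}
Non-Cohen--Macaulay skeletons are not Gorenstein, which completes (iii).
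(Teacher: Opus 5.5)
Your proposal is correct and follows the paper's proof essentially step by step. The common outline is: contractibility of components by induction along the leaf order; cellular approximation plus Hurewicz/Whitehead for the wedge decomposition; the reduced Euler characteristic, with the empty separators supplying the $c-1$, to count the spheres; Hochster's formula at $W=X$ together with the two-row shape of Theorem~\ref{thm:1} for the endpoints and extremality; and the bracket decomposition of $\lambda_k$ for the type and Gorenstein bounds. The differences are cosmetic. You use the gluing lemma where the paper collapses each new simplex onto $Q_m$, and you only need $\lambda_k\ge2$ where the paper records $\lambda_k\ge e$. Your unjustified claim that the $m=2$ bracket is at least $1$ follows, as in the paper, from $n_2-1\ge r_1$ and $\binom{r_1}{k+1}-\binom{r_1-1}{k+1}=\binom{r_1-1}{k}\ge1$, since $r_1>k$.
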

	
	\begin{proof}
		Add the facets in leaf order. If $Q_m\ne\varnothing$, the simplex $S_{m+1}$ is attached to the preceding union along the nonempty face $Q_m$ and collapses onto that face relative to the preceding union. If $Q_m=\varnothing$, the new simplex forms a new connected component. It follows inductively that every connected component of $\Delta_{\mathbf r}$ is contractible. The relation tree has exactly $c-1$ edges with empty separator.
		
		Fix $1\le k<d$. The $k$-skeleton has the same connected components as $\Delta_{\mathbf r}$. If a component has dimension at most $k$, it is unchanged and hence contractible. Otherwise, the component is obtained from its $k$-skeleton by adjoining simplices of dimension at least $k+1$. Therefore the inclusion of the $k$-skeleton induces isomorphisms on homotopy groups below degree $k$. For $k\ge2$, each component of the skeleton is consequently $(k-1)$-connected; for $k=1$, each component is a connected graph. A connected graph is a wedge of circles. For $k\ge2$, the Hurewicz theorem identifies $\pi_k$ with the free abelian group $H_k$, and maps from $k$-spheres representing a basis of $H_k$ give a homology equivalence from a wedge of $k$-spheres to the component. Since both spaces are simply connected CW complexes, Whitehead's theorem makes this a homotopy equivalence. Thus, in every case, a $k$-dimensional component is homotopy equivalent to a wedge of $k$-spheres.
		
		It remains to count the spheres. Proposition~\ref{prop:2}, together with
		\[
		-1+\sum_{t=1}^{k+1}(-1)^{t-1}\binom At
		=(-1)^k\binom{A-1}{k+1}
		\qquad(A\ge1),
		\]
		gives
		\[
		\widetilde\chi((\Delta_{\mathbf r})_{(k)})
		=(c-1)+(-1)^k\lambda_k.
		\]
		Here each empty separator contributes to the term $c-1$, while under our binomial convention its contribution to $\lambda_k$ is zero. Since the only possible reduced homology groups are in degrees $0$ and $k$, the total rank in degree $k$ is $\lambda_k$. This proves part~(i).
		
		Hochster's formula on the full vertex set gives
		\[
		\beta_{N_{\mathbf r}-k-1,N_{\mathbf r}}
		\bigl(\SR{(\Delta_{\mathbf r})_{(k)}}\bigr)
		=\dim_{\K}\widetilde H_k((\Delta_{\mathbf r})_{(k)};\K)
		=\lambda_k.
		\]
		For the linear row, substitute $i=N_{\mathbf r}-r_{\min}-1$ into Theorem~\ref{thm:1}(c). Every facet term vanishes because $n_m\ge r_{\min}+1$, and a separator contributes one exactly when $r_m=r_{\min}$. Thus the second endpoint is $\mu$.
		
		Theorem~\ref{thm:1} has only the linear row and the row $j=i+k+1$. The latter ends at homological degree $N_{\mathbf r}-k-1$, while the former ends at $N_{\mathbf r}-r_{\min}-1$. Hence, when $k\le r_{\min}$, the last nonzero entry in the row $j=i+k+1$ dominates every other nonzero Betti number and is the unique extremal one. When $k>r_{\min}$, the two endpoints are incomparable and are precisely the two extremal Betti numbers. This proves part~(ii).
		
		Assume now that $k\le r_{\min}$. By Theorem~\ref{thm:2}, the projective dimension is $N_{\mathbf r}-k-1$. If $k<r_{\min}$, the linear row ends earlier, so the last free module is
		\[
		R(-N_{\mathbf r})^{\lambda_k}.
		\]
		Hence the ring is level of type $\lambda_k$. If $k=r_{\min}$, the two rows end in the same homological degree and the last free module contains
		\[
		R(-(N_{\mathbf r}-k))^{\mu}\oplus R(-N_{\mathbf r})^{\lambda_k}.
		\]
		Both summands are nonzero: $\mu>0$, and $\lambda_k>0$ because $k<d$, so there is a $(k+1)$-face whose boundary is a nonzero $k$-cycle in the $k$-skeleton. Thus the type is $\lambda_k+\mu$ and the ring is not level. When $k>r_{\min}$, Corollary~\ref{cor:3} below shows that the ring is not Cohen--Macaulay, so it is not level in the standard sense. Theorem~\ref{thm:1}(b) shows that the $0$-skeleton is level of type $N_{\mathbf r}-1$ whenever it is proper.
		
		Finally, suppose $k<r_{\min}$. Then
		\[
		\lambda_k=
		\binom{n_1-1}{k+1}
		+\sum_{m=2}^{e}
		\left[
		\binom{n_m-1}{k+1}
		-
		\binom{r_{m-1}-1}{k+1}
		\right]\ge e.
		\]
		Indeed, $S_1$ is incident with an edge of the relation tree, so $n_1\ge r_{\min}+1\ge k+2$, and for $m\ge2$ we have $n_m\ge r_{m-1}+1$ with $r_{m-1}>k$; Pascal's identity shows that every bracket is at least one. Hence the type is at least two. If $k=r_{\min}$, then $\lambda_k+\mu\ge2$. A proper $0$-skeleton has $N_{\mathbf r}\ge3$ and type $N_{\mathbf r}-1\ge2$. Therefore no proper skeleton with nonnegative dimension is Gorenstein.
	\end{proof}
	
	\begin{corollary}\label{cor:3}
		Under the hypotheses and notation of Theorem~\ref{thm:2}, for every $-1\le k\le d$, the following statements hold.
		\begin{enumerate}
			\item[(i)] The ring $\SR{(\Delta_{\mathbf r})_{(k)}}$ is initially Cohen--Macaulay if and only if
			\[
			k\le r_{\min}
			\qquad\text{or}\qquad
			r_{\min}=n_{\min}-1.
			\]
			Thus, if $r_{\min}<n_{\min}-1$, the initially Cohen--Macaulay skeletons are exactly those with $k\le r_{\min}$; if $r_{\min}=n_{\min}-1$, every skeleton is initially Cohen--Macaulay.
			
			\item[(ii)] The ring $\SR{(\Delta_{\mathbf r})_{(k)}}$ is Cohen--Macaulay if and only if $k\le r_{\min}$.
		\end{enumerate}
	\end{corollary}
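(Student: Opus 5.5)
The plan is to compare the depth from Theorem~\ref{thm:2} with the Krull dimension and with the initial dimension of each skeleton. By Corollary~\ref{cor:1}, $\dim\SR{(\Delta_{\mathbf r})_{(k)}}=k+1$ for $-1\le k<d$, and for $k=d$ the dimension is $d+1=n_{\max}$. Theorem~\ref{thm:2} gives $\operatorname{depth}=k+1$ for $k\le r_{\min}$ and $\operatorname{depth}=r_{\min}+1$ for $r_{\min}<k\le d$.

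For part (ii), first treat $k\le r_{\min}$. Here depth equals $k+1$. This is the dimension, provided $k<d$ or $k=d=r_{\min}$. The case $k=d=r_{\min}$ cannot occur, since $r_{\min}\le n_m-1\le d$ by the argument in the proof of Theorem~\ref{thm:2}. Hence these skeletons are Cohen--Macaulay. Now let $k>r_{\min}$. The depth is $r_{\min}+1<k+1$, and this is at most the dimension in both the proper case and the case $k=d$, where the dimension is $d+1\ge k+1$. So these skeletons are not Cohen--Macaulay. This proves (ii).

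For part (i), I would first compute the initial dimension, which is the minimum facet cardinality. The facets of $(\Delta_{\mathbf r})_{(k)}$ are of two kinds: faces of cardinality $k+1$ of $\Delta_{\mathbf r}$, and facets $S_m$ of $\Delta_{\mathbf r}$ with $n_m\le k+1$. Hence
\[
\indim\SR{(\Delta_{\mathbf r})_{(k)}}=\min\{k+1,n_{\min}\}.
\]
A Cohen--Macaulay module is initially Cohen--Macaulay, since depth $\le$ indim $\le$ dim always holds. Thus the case $k\le r_{\min}$ is settled by (ii). I would also check this directly: $n_{\min}\ge r_{\min}+1\ge k+1$, so $\indim=k+1=\operatorname{depth}$.

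For $k>r_{\min}$, the ring is initially Cohen--Macaulay exactly when $r_{\min}+1=\min\{k+1,n_{\min}\}$. Since $k+1>r_{\min}+1$, this holds exactly when $n_{\min}=r_{\min}+1$. Combining the two cases gives the stated equivalence. The reformulation in the second sentence of (i) is then immediate.

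The only delicate step is the facet description at the boundary $k=d$, together with the ghost-vertex convention for $k=-1$. For $k=d$ the skeleton is $\Delta_{\mathbf r}$ itself, and $\min\{d+1,n_{\min}\}=n_{\min}$ is consistent with this. For $k=-1$ the depth and the initial dimension are both $0$. I expect no real obstacle beyond bookkeeping the inequality $r_{\min}\le n_{\min}-1$, which was established in the proof of Theorem~\ref{thm:2}.
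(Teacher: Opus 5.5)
Your argument is correct and is essentially the paper's proof: you compute the initial dimension as $\min\{k+1,n_{\min}\}$ from the facets of the skeleton, use $r_{\min}+1\le n_{\min}$, and compare the result with the depth and dimension from Theorem~\ref{thm:2}. One side remark is false but harmless. The case $k=d=r_{\min}$ can occur, because $r_{\min}\le n_m-1\le d$ only gives $r_{\min}\le d$; for example, two triangles glued along an edge give $r_{\min}=d=2$, which is exactly the Cohen--Macaulay case of Corollary~\ref{cor:4}(ii). In that case the depth is $d+1$, which equals the dimension, so your conclusion for $k\le r_{\min}$ still holds.
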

	
	\begin{proof}
		The smallest cardinality of a facet of the $k$-skeleton is
		\[
		\indim\SR{(\Delta_{\mathbf r})_{(k)}}=\min\{k+1,n_{\min}\}.
		\]
		Indeed, if $k+1<n_{\min}$, every face of cardinality at most $k$ extends inside an original facet to a face of cardinality $k+1$, so every facet of the skeleton has that cardinality. If $k+1\ge n_{\min}$, an original facet of cardinality $n_{\min}$ remains a facet of the skeleton, and no smaller facet can occur.
		
		We also have $r_{\min}+1\le n_{\min}$. To see this, choose a facet of minimum cardinality and an incident edge in the relation tree. Its separator is a proper subset of the facet, and $r_{\min}$ is no larger than that separator.
		
		If $k\le r_{\min}$, Theorem~\ref{thm:2} gives depth $k+1$, which equals the initial dimension. Now let $k>r_{\min}$. The depth is $r_{\min}+1$, while $k+1>r_{\min}+1$ and $n_{\min}\ge r_{\min}+1$. Therefore the depth equals $\min\{k+1,n_{\min}\}$ precisely when $n_{\min}=r_{\min}+1$. This proves part~(i).
		
		For part~(ii), the Krull dimension is $k+1$. Theorem~\ref{thm:2} shows that the depth equals this dimension exactly when $k\le r_{\min}$.
	\end{proof}
	
	\begin{corollary}\label{cor:4}
		Under the hypotheses and notation of Theorem~\ref{thm:2}, for the full complex $\Delta_{\mathbf r}$, the following statements hold.
		\begin{enumerate}
			\item[(i)] The ring $\SR{\Delta_{\mathbf r}}$ is initially Cohen--Macaulay if and only if $r_{\min}=n_{\min}-1$.
			\item[(ii)] The ring $\SR{\Delta_{\mathbf r}}$ is Cohen--Macaulay if and only if all facets have a common cardinality $n$ and every separator has cardinality $n-1$.
			\item[(iii)] If the equivalent conditions in part~(ii) hold, then $\SR{\Delta_{\mathbf r}}$ is level of type $e-1$. In particular, it is Gorenstein if and only if $e=2$.
		\end{enumerate}
	\end{corollary}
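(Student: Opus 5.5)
Corollary~\ref{cor:4} is the case $k=d$ of the preceding results, so the plan is to specialize them and add one counting argument for the type.

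For part~(i), I would apply Corollary~\ref{cor:3}(i) with $k=d=n_{\max}-1$. The criterion there reads $k\le r_{\min}$ or $r_{\min}=n_{\min}-1$. Since $r_{\min}\le n_{\min}-1\le n_{\max}-1=d$, the inequality $d\le r_{\min}$ forces $r_{\min}=n_{\min}-1=n_{\max}-1$. So the first alternative is contained in the second, and the criterion collapses to $r_{\min}=n_{\min}-1$.

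For part~(ii), Corollary~\ref{cor:3}(ii) with $k=d$ says Cohen--Macaulayness holds if and only if $n_{\max}-1\le r_{\min}$. Every separator is a proper subset of an incident facet, so for every $m$ and every separator $Q_l$ we have the chain $r_{\min}\le r_l\le n_m-1\le n_{\max}-1$. Equality $r_{\min}=n_{\max}-1$ collapses this chain. Hence all $n_m$ equal a common $n$ and all $r_l=n-1$. The converse is immediate.

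For part~(iii), I would use the explicit resolution rather than Hibi's theorem. Under the conditions of (ii), the Betti numbers in positive homological degrees lie only in the linear row of Theorem~\ref{thm:1}(c) with $k=d$, i.e.\ Theorem~\ref{thm:1}(d). The projective dimension is $N_{\mathbf r}-r_{\min}-1=N_{\mathbf r}-n$ by Theorem~\ref{thm:2}. The last module therefore sits in the single degree $N_{\mathbf r}-n+1$, so the ring is level. Its rank is the value at $i=N_{\mathbf r}-n$ of
\[
-\sum_m\binom{N_{\mathbf r}-n}{i+1}+\sum_m\binom{N_{\mathbf r}-n+1}{i+1}.
\]
The facet terms vanish and each of the $e-1$ separators contributes $1$. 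This is exactly the count $\mu=e-1$ from the proof of Theorem~\ref{thm:2}, so the type is $e-1$. A Cohen--Macaulay ring is Gorenstein precisely when its type is $1$, which gives the criterion $e=2$.

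The only potential obstacle is making sure the linear-row formula of Theorem~\ref{thm:1}(c) legitimately applies at $k=d$. Theorem~\ref{thm:1}(d) asserts this, since the formula comes from the Hilbert numerator of the full complex. The rest is bookkeeping with the inequality $r_{\min}\le n_{\min}-1$ already proved in Corollary~\ref{cor:3}.
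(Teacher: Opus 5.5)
Your proposal is correct and follows the paper's proof essentially step for step: (i) and (ii) specialize Corollary~\ref{cor:3} at $k=d$ using $r_{\min}\le n_{\min}-1\le d$, and (iii) reads off level type $e-1$ from the single last entry of the linear row (the paper writes this as $\beta_{e-1,e}=e-1$ via $N_{\mathbf r}=n+e-1$). One small slip in (ii): the chain $r_l\le n_m-1$ does not hold for every pair $(l,m)$ (in Example~\ref{ex:2}, $r_2=3>n_1-1$), so argue instead with the two separate inequalities $r_{\min}\le n_m-1$ (from a separator incident to $S_m$) and $r_l\le n_{\max}-1$ (from a facet containing $Q_l$), which give the same conclusion.
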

	
	\begin{proof}
		Part~(i) follows from Corollary~\ref{cor:3}(i) with $k=d=n_{\max}-1$, because $r_{\min}\le n_{\min}-1\le d$. For part~(ii), Corollary~\ref{cor:3}(ii) gives Cohen--Macaulayness exactly when $d\le r_{\min}$. The inequalities
		\[
		r_{\min}\le n_{\min}-1\le n_{\max}-1=d
		\]
		then imply equality throughout. Thus every facet has the same cardinality and every separator has codimension one in the adjacent facets. The converse follows from the same criterion, and agrees with Fr\"oberg's criterion for the full fat forest \cite{Fr1}.
		
		Under these conditions, $n_m=n$, $r_m=n-1$, and $N_{\mathbf r}=n+e-1$. Hence $\pdim\SR{\Delta_{\mathbf r}}=e-1$, and the last entry in the linear row of Theorem~\ref{thm:1}(d) is
		\[
		\beta_{e-1,e}(\SR{\Delta_{\mathbf r}})=e-1.
		\]
		The last free module therefore has one shift, so the ring is level of type $e-1$. A Cohen--Macaulay ring is Gorenstein exactly when its type is one, which here is equivalent to $e=2$. This proves part~(iii).
	\end{proof}
	
	\section{The Alexander dual and its minimal resolution}\label{sec:5}
	
	We now turn to $\Delta_{\mathbf r}^\vee$. Since $I_{\Delta_{\mathbf r}}$ is generated in degree two and has a linear resolution, the Eagon--Reiner theorem shows that $\SR{\Delta_{\mathbf r}^\vee}$ is Cohen--Macaulay of dimension $N_{\mathbf r}-2$.
	
	\begin{proposition}\label{prop:3}
		Let $\Delta_{\mathbf r}=\Delta_{\mathbf r}(n_1,\ldots,n_e)$ be the clique complex of a chordal graph with at least two maximal cliques $S_1,\ldots,S_e$ in a fixed leaf order. Put $n_m=|S_m|$, let $r_1,\ldots,r_{e-1}$ be the corresponding separator cardinalities.
		For $-1\le q\le N_{\mathbf r}-3$,
		\[
		f_q(\Delta_{\mathbf r}^\vee)
		=
		\binom{N_{\mathbf r}}{q+1}
		-
		\sum_{m=1}^e\binom{n_m}{N_{\mathbf r}-q-1}
		+
		\sum_{m=1}^{e-1}\binom{r_m}{N_{\mathbf r}-q-1}.
		\]
		Moreover,
		\[
		h_{\SR{\Delta_{\mathbf r}^\vee}}(z)
		=
		\frac{
			1-\sum_{m=1}^e z^{N_{\mathbf r}-n_m}
			+\sum_{m=1}^{e-1}z^{N_{\mathbf r}-r_m}
		}{(1-z)^2}.
		\]
	\end{proposition}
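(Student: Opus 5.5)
The face count follows directly from the definition of the Alexander dual. The $h$-polynomial then comes from the Hilbert series of $\SR{\Delta_{\mathbf r}}$ in Proposition~\ref{prop:2}, using the standard complementation relation between the $f$-vectors of $\Sigma$ and $\Sigma^\vee$.

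\emph{The $f$-vector.} A subset $F\subseteq X$ with $|F|=q+1$ lies in $\Delta_{\mathbf r}^\vee$ exactly when its complement, of cardinality $N_{\mathbf r}-q-1$, is not a face of $\Delta_{\mathbf r}$. Counting complements gives
\[
f_q(\Delta_{\mathbf r}^\vee)=\binom{N_{\mathbf r}}{q+1}-f_{N_{\mathbf r}-q-2}(\Delta_{\mathbf r}).
\]
Substituting Proposition~\ref{prop:2} with $t=N_{\mathbf r}-q-1$ gives the stated formula. The range $-1\le q\le N_{\mathbf r}-3$ ensures $t\ge2$, so $t$ is a legitimate index. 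The formula in fact holds for all $q$ with $t\ge0$, but at $t\le1$ the terms are degenerate: for instance $q=N_{\mathbf r}-2$ gives $N_{\mathbf r}-N_{\mathbf r}=0$, consistent with $\dim\Delta_{\mathbf r}^\vee=N_{\mathbf r}-3$. The mild point is that the binomial sums in Proposition~\ref{prop:2} remain valid under the paper's convention, including empty separators.

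\emph{The $h$-polynomial.} I would compute the Hilbert numerator $\sum_j c_jz^j$ of $\SR{\Delta_{\mathbf r}^\vee}$ over $(1-z)^{N_{\mathbf r}}$ and then divide by $(1-z)^{N_{\mathbf r}-(N_{\mathbf r}-2)}=(1-z)^2$. The dimension $N_{\mathbf r}-2$ was noted just before the proposition via Eagon--Reiner, or it can be read off from the $f$-vector. The key tool is the standard identity: writing $\mathcal K(\Sigma)(z)=\sum_{F\in\Sigma}z^{|F|}(1-z)^{N-|F|}$ for the $K$-polynomial of $\SR{\Sigma}$,
\[
\mathcal K(\Sigma^\vee)(z)=\sum_{G\notin\Sigma}z^{N-|G|}(1-z)^{|G|},
\]
after substituting $G=X\setminus F$. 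Since $\sum_{G\subseteq X}z^{N-|G|}(1-z)^{|G|}=1$, this equals $1-\sum_{G\in\Sigma}z^{N-|G|}(1-z)^{|G|}$. The remaining sum is the same inclusion–exclusion as in Proposition~\ref{prop:2}: sum over the faces of each simplex $S_m$ and subtract the sum over the faces of each $Q_m$. For a simplex on a set $A$ with $|A|=a$,
\[
\sum_{G\subseteq A}z^{N-|G|}(1-z)^{|G|}=z^{N-a}\bigl(z+(1-z)\bigr)^a=z^{N-a}.
\]
Hence the $K$-polynomial is $1-\sum_m z^{N_{\mathbf r}-n_m}+\sum_m z^{N_{\mathbf r}-r_m}$. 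An empty separator contributes $z^{N_{\mathbf r}}$, which is consistent with the formula.

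\emph{Conclusion and expected obstacle.} Dividing the $K$-polynomial by $(1-z)^2$ gives $h$. The main step to justify is that this quotient is a genuine polynomial, i.e.\ that $(1-z)^2$ divides the numerator. This follows from $\dim\SR{\Delta_{\mathbf r}^\vee}=N_{\mathbf r}-2$. It can also be checked directly: at $z=1$ the numerator equals $1-e+(e-1)=0$, and its derivative at $z=1$ equals
\[
-\sum_m(N_{\mathbf r}-n_m)+\sum_m(N_{\mathbf r}-r_m)=-N_{\mathbf r}+N_{\mathbf r}=0,
\]
using $\sum n_m-\sum r_m=N_{\mathbf r}$. This direct verification would be included in the proof.
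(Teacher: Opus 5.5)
Your proposal is correct and follows the paper's proof essentially step for step: the complement count $f_q(\Delta_{\mathbf r}^\vee)=\binom{N_{\mathbf r}}{q+1}-f_{N_{\mathbf r}-q-2}(\Delta_{\mathbf r})$, the rewriting of the Hilbert numerator as $1-\sum_{F\in\Delta_{\mathbf r}}z^{N_{\mathbf r}-|F|}(1-z)^{|F|}$, the leaf-order inclusion--exclusion with the simplex identity $\sum_{F\subseteq S}z^{N_{\mathbf r}-|F|}(1-z)^{|F|}=z^{N_{\mathbf r}-|S|}$, and division by $(1-z)^2$ using $\dim\SR{\Delta_{\mathbf r}^\vee}=N_{\mathbf r}-2$. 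Two of your points go slightly beyond the paper and are harmless extras: the direct check that the numerator and its derivative vanish at $z=1$, and the remark that the dimension can be read off the $f$-vector, so Eagon--Reiner is not actually needed for this statement.
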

	
	\begin{proof}
		A set $A\subseteq X$ of cardinality $q+1$ belongs to $\Delta_{\mathbf r}^\vee$ if and only if $X\setminus A$ is not a face of $\Delta_{\mathbf r}$. Hence
		\[
		f_q(\Delta_{\mathbf r}^\vee)
		=
		\binom{N_{\mathbf r}}{q+1}
		-f_{N_{\mathbf r}-q-2}(\Delta_{\mathbf r}),
		\]
		and Proposition~\ref{prop:2} gives the first formula.
		
		To obtain the Hilbert numerator, sum first over all subsets of $X$ and then remove the complements of the faces of $\Delta_{\mathbf r}$. After multiplication by $(1-z)^{N_{\mathbf r}}$, this gives
		\[
		P(z)
		=1-\sum_{F\in\Delta_{\mathbf r}}z^{N_{\mathbf r}-|F|}(1-z)^{|F|}.
		\]
		Apply the same leaf order inclusion--exclusion used in Proposition~\ref{prop:2}. For a simplex on a set $S$ of cardinality $a$,
		\[
		\sum_{F\subseteq S}z^{N_{\mathbf r}-|F|}(1-z)^{|F|}
		=z^{N_{\mathbf r}-a}.
		\]
		The facet terms and separator terms therefore give
		\[
		P(z)=1-\sum_{m=1}^e z^{N_{\mathbf r}-n_m}
		+\sum_{m=1}^{e-1}z^{N_{\mathbf r}-r_m}.
		\]
		Because the ring is Cohen--Macaulay of dimension $N_{\mathbf r}-2$, its Hilbert series has denominator $(1-z)^{N_{\mathbf r}-2}$. Thus $P(z)=(1-z)^2h_{\SR{\Delta_{\mathbf r}^\vee}}(z)$, which proves the second formula.
	\end{proof}
	
	\begin{corollary}\label{cor:5}
		Under the hypotheses and notation of Proposition~\ref{prop:3}, let $G$ be the chordal graph with $\Delta_{\mathbf r}=\operatorname{Cl}(G)$. The multiplicity of $\SR{\Delta_{\mathbf r}^\vee}$ is the number of nonedges of $G$, and hence
		\begin{align*}
			e\bigl(\SR{\Delta_{\mathbf r}^\vee}\bigr)
			&=|E(\overline G)|\\
			&=\binom{N_{\mathbf r}}2
			-\sum_{m=1}^{e}\binom{n_m}{2}
			+\sum_{m=1}^{e-1}\binom{r_m}{2}\\
			&=\frac12\left[
			\sum_{m=1}^{e-1}(N_{\mathbf r}-r_m)(N_{\mathbf r}-r_m-1)
			-
			\sum_{m=1}^{e}(N_{\mathbf r}-n_m)(N_{\mathbf r}-n_m-1)
			\right].
		\end{align*}
	\end{corollary}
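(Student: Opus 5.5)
The multiplicity of $\SR{\Delta_{\mathbf r}^\vee}$ is the number of its facets of maximal dimension. Proposition~\ref{prop:3} gives the Hilbert series, so I would compute the multiplicity in two ways and check that they agree. Each way yields one of the two closed forms, and together they give the chain of equalities in the statement.

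The combinatorial description comes first. $\SR{\Delta_{\mathbf r}^\vee}$ is Cohen--Macaulay of dimension $N_{\mathbf r}-2$, so $\Delta_{\mathbf r}^\vee$ is pure of dimension $N_{\mathbf r}-3$. Its multiplicity is therefore $f_{N_{\mathbf r}-3}(\Delta_{\mathbf r}^\vee)$, the number of faces of cardinality $N_{\mathbf r}-2$. A set $A$ with $|A|=N_{\mathbf r}-2$ lies in $\Delta_{\mathbf r}^\vee$ exactly when its two-element complement is not a face of $\Delta_{\mathbf r}=\operatorname{Cl}(G)$. Since $\Delta_{\mathbf r}$ is a clique complex, a pair is a face exactly when it is an edge of $G$. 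So the facets of the dual correspond bijectively to the nonedges of $G$, and the multiplicity is $|E(\overline G)|$.

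For the second expression, set $q=N_{\mathbf r}-3$ in Proposition~\ref{prop:3}. Then $N_{\mathbf r}-q-1=2$, and we get
\[
\binom{N_{\mathbf r}}{2}-\sum_{m=1}^e\binom{n_m}{2}+\sum_{m=1}^{e-1}\binom{r_m}{2}.
\]
The same expression also follows from Proposition~\ref{prop:2} with $t=2$, since it gives the number of edges of $G$.

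For the third expression, I evaluate $h_{\SR{\Delta_{\mathbf r}^\vee}}(1)$ using the formula $P(z)/(1-z)^2$. Write $a_m=N_{\mathbf r}-n_m$ and $b_m=N_{\mathbf r}-r_m$. Then $P(1)=1-e+(e-1)=0$, and $P'(1)=-\sum a_m+\sum b_m$. This derivative also vanishes, because $\sum b_m-\sum a_m=(e-1)N_{\mathbf r}-eN_{\mathbf r}+N_{\mathbf r}=0$ by the definition of $N_{\mathbf r}$. So $(1-z)^2$ divides $P$, and $h(1)=P''(1)/2$. Computing,
\[
P''(1)=-\sum a_m(a_m-1)+\sum b_m(b_m-1),
\]
which gives the bracketed formula. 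The only real step is checking that both $P(1)$ and $P'(1)$ vanish. Alternatively, the last formula follows algebraically from the second: expand $\binom{N-a}{2}$, and the linear and constant terms cancel by the same vertex-count identity.
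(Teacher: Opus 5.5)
Your proposal is correct and follows the paper's proof essentially step for step. You identify the top-dimensional faces of the dual with complements of nonedges of $G$, read off the binomial expression from the face count (Proposition~\ref{prop:3} at $q=N_{\mathbf r}-3$, or Proposition~\ref{prop:2} at $t=2$), and obtain the last form from $P''(1)=2h(1)$. Your explicit check that $P(1)=P'(1)=0$ is redundant, since Proposition~\ref{prop:3} already gives $P(z)=(1-z)^2h(z)$ with $h$ a polynomial, but it is harmless.
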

	
	\begin{proof}
		Because $\Delta_{\mathbf r}$ is a clique complex, its minimal nonfaces are precisely the nonedges of $G$. The facets of the Alexander dual are their complements. Since $\SR{\Delta_{\mathbf r}^\vee}$ is Cohen--Macaulay of dimension $N_{\mathbf r}-2$, all facets of $\Delta_{\mathbf r}^\vee$ have cardinality $N_{\mathbf r}-2$, so the multiplicity is the number of these facets, namely $|E(\overline G)|$. Proposition~\ref{prop:2} with $t=2$ gives the second expression. For the last expression, let $P(z)$ be the Hilbert numerator from Proposition~\ref{prop:3}. Since $P(z)=(1-z)^2h(z)$, one has $P''(1)=2h(1)$, and $h(1)$ is the multiplicity. Differentiating $P(z)$ twice gives the stated equality.
	\end{proof}
	
	\begin{corollary}\label{cor:6}
		Under the hypotheses and notation of Proposition~\ref{prop:3}, write $h_{\SR{\Delta_{\mathbf r}^\vee}}(z)=\sum_{q=0}^{N_{\mathbf r}-2}h_qz^q$. Then
		\[
		h_q
		=
		(q+1)
		-
		\sum_{m=1}^e\max\{0,q-N_{\mathbf r}+n_m+1\}
		+
		\sum_{m=1}^{e-1}\max\{0,q-N_{\mathbf r}+r_m+1\}
		\]
		for $0\le q\le N_{\mathbf r}-2$.
	\end{corollary}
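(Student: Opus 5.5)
The plan is to expand the closed form of the $h$-polynomial from Proposition~\ref{prop:3} term by term as a power series and read off coefficients. Write
\[
h_{\SR{\Delta_{\mathbf r}^\vee}}(z)=\frac{P(z)}{(1-z)^2},\qquad
P(z)=1-\sum_{m=1}^e z^{N_{\mathbf r}-n_m}+\sum_{m=1}^{e-1}z^{N_{\mathbf r}-r_m},
\]
and use the expansion $(1-z)^{-2}=\sum_{p\ge0}(p+1)z^p$ in the ring of formal power series. For a single monomial $z^a$ with $a\ge0$ we get
\[
[z^q]\,\frac{z^a}{(1-z)^2}=\begin{cases}q-a+1,&q\ge a,\\0,&q<a,\end{cases}
\]
which equals $\max\{0,q-a+1\}$ for every $q\ge0$, because $q-a+1\le0$ exactly when $q<a$.

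Linearity then gives the coefficient of $z^q$ in $h(z)$. The constant term $1$ contributes $q+1$. Each facet term, with $a=N_{\mathbf r}-n_m$, contributes $-\max\{0,q-N_{\mathbf r}+n_m+1\}$. Each separator term, with $a=N_{\mathbf r}-r_m$, contributes $+\max\{0,q-N_{\mathbf r}+r_m+1\}$. All the exponents $a$ are nonnegative, since $n_m,r_m\le N_{\mathbf r}$. This yields exactly the stated formula for $h_q$.

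The one point needing care is that the identity $h=P/(1-z)^2$ holds as power series while $h$ is a polynomial. Proposition~\ref{prop:3} identifies $h$ as the $h$-polynomial of a Cohen--Macaulay ring of dimension $N_{\mathbf r}-2$, so $(1-z)^2$ divides $P$ and $h$ has degree at most $N_{\mathbf r}-2$. Division in $\K[[z]]$ agrees with the polynomial quotient, so for $0\le q\le N_{\mathbf r}-2$ the power-series coefficient is precisely $h_q$.

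As a sanity check, the coefficients of the power series vanish for $q>N_{\mathbf r}-2$. This holds because $P(1)=1-e+(e-1)=0$ and $P'(1)=0$, the latter following from $N_{\mathbf r}=\sum n_m-\sum r_m$. The check is not needed for the stated range. I expect no real obstacle; the only subtlety is this justification of passing from the power-series identity to the polynomial coefficients.
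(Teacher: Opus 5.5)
Your proposal is correct and matches the paper's proof: expand $(1-z)^{-2}=\sum_{p\ge0}(p+1)z^p$, observe that $z^a(1-z)^{-2}$ contributes $\max\{0,q-a+1\}$ to the coefficient of $z^q$, and substitute $a=N_{\mathbf r}-n_m$ and $a=N_{\mathbf r}-r_m$. Your extra justification for passing from the power-series identity to the polynomial coefficients, including the check $P(1)=P'(1)=0$, is sound and makes explicit a step the paper leaves implicit.
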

	
	\begin{proof}
		Expand $(1-z)^{-2}=\sum_{q\ge0}(q+1)z^q$ in Proposition~\ref{prop:3}. A term $z^a(1-z)^{-2}$ contributes $q-a+1$ to the coefficient of $z^q$ when $q\ge a$ and contributes zero otherwise. Substituting $a=N_{\mathbf r}-n_m$ and $a=N_{\mathbf r}-r_m$ gives the formula.
	\end{proof}
	
	The Hilbert numerator in Proposition~\ref{prop:3} may contain a positive and a negative term in the same degree. It therefore does not, by itself, determine how many summands occur in homological degrees one and two. To determine these contributions separately, we construct the minimal resolution.
	
	\begin{theorem}\label{thm:4}
		Let $\Delta_{\mathbf r}=\Delta_{\mathbf r}(n_1,\ldots,n_e)$ be the clique complex of a chordal graph on vertex set $X$, with at least two maximal cliques $S_1,\ldots,S_e$ in a fixed leaf order. Put $n_m=|S_m|$, let $S_{p(m)}$ be the chosen branch of $S_m$ for $m\ge2$, set $Q_{m-1}=S_m\cap S_{p(m)}$ and $r_{m-1}=|Q_{m-1}|$, and let $N_{\mathbf r}=|X|$. For $m=1,\ldots,e$, set
		\[
		g_m=\prod_{x\in X\setminus S_m}x.
		\]
		These are the minimal generators of $I_{\Delta_{\mathbf r}^\vee}$. Join each $S_m$ with $m\ge2$ to its chosen branch $S_{p(m)}$; these edges form a relation tree. Then $I_{\Delta_{\mathbf r}^\vee}$ has the minimal free resolution
		\[
		0\longrightarrow
		\bigoplus_{m=2}^e R\bigl(-(N_{\mathbf r}-r_{m-1})\bigr)
		\xrightarrow{\partial_2}
		\bigoplus_{m=1}^e R\bigl(-(N_{\mathbf r}-n_m)\bigr)
		\xrightarrow{\partial_1}
		I_{\Delta_{\mathbf r}^\vee}
		\longrightarrow0.
		\]
		Here $\partial_1(e_m)=g_m$. The column of $\partial_2$ indexed by $m\ge2$ has exactly two nonzero entries:
		\[
		(\partial_2)_{p(m),m-1}
		=
		\prod_{x\in S_{p(m)}\setminus S_m}x,
		\qquad
		(\partial_2)_{m,m-1}
		=
		-\prod_{x\in S_m\setminus S_{p(m)}}x.
		\]
	\end{theorem}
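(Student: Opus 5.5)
First I will identify the generators. Since $\Delta_{\mathbf r}$ has facets $S_1,\ldots,S_e$, the Alexander dual satisfies $I_{\Delta_{\mathbf r}^\vee}=(x_{X\setminus F}:F\text{ a facet of }\Delta_{\mathbf r})$. So the $g_m$ generate, and they are minimal because the facets are pairwise incomparable. The degree of $g_m$ is $N_{\mathbf r}-n_m$, which gives the shifts in homological degree one.

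Next I will check that the proposed sequence is a complex whose multidegrees are consistent. Fix $m\ge2$ and write $p=p(m)$. The lcm of $g_p$ and $g_m$ is $x_{X\setminus(S_p\cap S_m)}=x_{X\setminus Q_{m-1}}$. This has degree $N_{\mathbf r}-r_{m-1}$, and it factors as
\[
\operatorname{lcm}=g_p\cdot x_{S_p\setminus S_m}=g_m\cdot x_{S_m\setminus S_p}.
\]
Hence $\partial_1\partial_2(e_{m-1})=0$, and the column is homogeneous of multidegree $x_{X\setminus Q_{m-1}}$. Minimality is immediate: every entry of $\partial_2$ is a nonconstant monomial, because $S_p\not\subseteq S_m$ and $S_m\not\subseteq S_p$. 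The same holds for $\partial_1$.

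The main step is exactness, and I plan to prove it using the standard criterion for tree-supported resolutions. One route uses the leaf property directly. By induction on $e$, let $I'=(g'_1,\ldots,g'_{e-1})$ denote the corresponding ideal for $S_1,\ldots,S_{e-1}$, computed on $X$. I will then establish the colon identity
\[
(g_1,\ldots,g_{e-1}):g_e=(x_{S_p\setminus S_e}).
\]
For $i<e$, the colon $(g_i):g_e$ is generated by $g_i/\gcd(g_i,g_e)=x_{S_e\setminus S_i}$ with the roles swapped; more precisely it is generated by $x_{(X\setminus S_i)\setminus(X\setminus S_e)}=x_{S_e\setminus S_i}$. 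Wait, I need to recompute this carefully. We have $(g_i):g_e=(x_{(X\setminus S_i)\cap S_e})=(x_{S_e\setminus S_i})$, and similarly for $p$. The leaf property $S_e\cap S_i\subseteq S_e\cap S_p$ gives $S_e\setminus S_p\subseteq S_e\setminus S_i$, so the generator coming from $p$ divides all the others. Therefore $(g_1,\ldots,g_{e-1}):g_e$ is principal, generated by $x_{S_e\setminus S_p}$.

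Since this colon is principal, the mapping cone of
\[
R(-\deg)\xrightarrow{\;\cdot x_{S_e\setminus S_p}\;}\text{(resolution of }(g_1,\ldots,g_{e-1}))
\]
lifted along $e_p$ yields a resolution of $I$. This adds exactly one generator $e_e$ and one syzygy column, namely $x_{S_p\setminus S_e}e_p-x_{S_e\setminus S_p}e_e$. No new second syzygies arise, because the colon ideal is principal and hence resolved by a single free module. The resolution of the smaller ideal comes from the induction hypothesis, once I observe that $S_1,\ldots,S_{e-1}$ is a leaf order. This holds even though the $g_i$ here differ by the common factor $x_{X\setminus(S_1\cup\cdots\cup S_{e-1})}$: multiplying every generator by a common monomial does not change the colons or the form of the resolution. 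I expect the main care point to be this last bookkeeping, where the induction is carried out over the fixed ring $R$ on $X$ and the common factor appears. The base case $e=1$ is a principal ideal.

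Finally, the complex has length two. This is consistent with the fact that $\SR{\Delta_{\mathbf r}^\vee}$ is Cohen--Macaulay of codimension $2$, so $I$ has projective dimension $1$. The mapping cone adds no cancellable units because all entries are nonconstant, so the resolution is minimal.
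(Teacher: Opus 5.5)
Your proposal is correct and follows essentially the same route as the paper. The leaf property makes each colon $(g_1,\ldots,g_{m-1}):g_m$ principal, generated by $x_{S_m\setminus S_{p(m)}}$, so the iterated mapping cone adds exactly one generator and one syzygy column at each step, and minimality follows because every entry is a nonconstant monomial.

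The only differences are minor. The paper inducts directly on $J_m=(g_1,\ldots,g_m)$ in the fixed ring on $X$, which avoids your detour through the smaller complex and the common factor $x_{X\setminus(S_1\cup\cdots\cup S_{e-1})}$. The paper also includes the one-line check that the branch edges form a tree (each edge strictly decreases the facet index), which you omitted.
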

	
	\begin{proof}
		We first verify that the chosen branches form a tree. Each edge joins a later facet $S_m$ to an earlier facet $S_{p(m)}$. Repeatedly passing to the branch therefore reaches $S_1$, so the graph is connected. A cycle cannot occur because every edge decreases the facet index in one direction. Hence the graph is a tree.
		
		Now put $J_m=(g_1,\ldots,g_m)$. For $m\ge2$, the usual formula for a colon of monomial ideals gives
		\begin{align*}
			J_{m-1}:g_m
			&=
			\left(
			\frac{g_i}{\gcd(g_i,g_m)}:1\le i<m
			\right)\\
			&=
			\left(
			\prod_{x\in S_m\setminus S_i}x:1\le i<m
			\right).
		\end{align*}
		Because $S_m$ is a leaf with branch $S_{p(m)}$, we have $S_m\cap S_i\subseteq S_m\cap S_{p(m)}$ for every $i<m$. Equivalently,
		\[
		S_m\setminus S_{p(m)}\subseteq S_m\setminus S_i.
		\]
		Thus the monomial
		\[
		u_m=\prod_{x\in S_m\setminus S_{p(m)}}x
		\]
		divides every listed generator of the colon ideal. It is itself the generator corresponding to $i=p(m)$, and therefore $J_{m-1}:g_m=(u_m)$.
		
		Starting with $J_1=(g_1)$, adjoin $g_2,\ldots,g_e$ in leaf order. Since each colon ideal is principal, the mapping cone at the $m$th step adds one first syzygy and no higher syzygy. The new first syzygy is the relation between $g_{p(m)}$ and $g_m$, in multidegree
		\[
		\operatorname{lcm}(g_{p(m)},g_m)
		=
		\prod_{x\in X\setminus(S_{p(m)}\cap S_m)}x.
		\]
		Because $S_{p(m)}\cap S_m=Q_{m-1}$, its total degree is $N_{\mathbf r}-r_{m-1}$, and the two coefficients are the entries given in the theorem. The two adjacent facets are distinct maximal faces, so both set differences are nonempty. Every matrix entry therefore lies in the homogeneous maximal ideal, and the mapping cone is minimal. Induction on $m$ completes the proof.
	\end{proof}
	
	\begin{remark}\label{rem:4}
		The existence of a tree-supported resolution is not new. Herzog, Hibi, and Zheng used relation trees and the Hilbert--Burch theorem for complementary facet ideals of quasi-forests \cite[Section~2]{HHZ04}; Faridi and Hersey later characterized monomial ideals of projective dimension one by resolutions supported on trees \cite{FaridiHersey}. The role of Theorem~\ref{thm:4} is to express this resolution in the chosen leaf order, with all multidegrees, shifts, and matrix entries written directly in terms of $S_m$ and $Q_m$.
	\end{remark}
	
	\begin{corollary}\label{cor:7}
		Under the hypotheses and notation of Theorem~\ref{thm:4}, for every subset $A\subseteq X$, the nonzero multigraded Betti numbers in positive homological degrees of the quotient $\SR{\Delta_{\mathbf r}^\vee}$ are
		\[
		\beta_{1,A}(\SR{\Delta_{\mathbf r}^\vee})
		=
		\bigl|\{m:X\setminus S_m=A\}\bigr|,
		\qquad
		\beta_{2,A}(\SR{\Delta_{\mathbf r}^\vee})
		=
		\bigl|\{q:X\setminus Q_q=A\}\bigr|.
		\]
		All multigraded Betti numbers in homological degree at least three vanish. In particular, the multigraded Betti numbers in homological degree one determine the labeled complex $\Delta_{\mathbf r}$, and those in homological degree two determine the separators with multiplicity.
	\end{corollary}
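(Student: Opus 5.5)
The plan is to read the multigraded Betti numbers off the minimal resolution of Theorem~\ref{thm:4}, shifted by one homological degree. The resolution there is minimal and multigraded: each basis element $e_m$ sits in multidegree $x_{X\setminus S_m}$, because $\partial_1(e_m)=g_m$. Each basis element of the second module sits in multidegree $\operatorname{lcm}(g_{p(m)},g_m)=x_{X\setminus Q_{m-1}}$, as computed in the proof of Theorem~\ref{thm:4}. Prepending $R\to\SR{\Delta_{\mathbf r}^\vee}$, with kernel $I_{\Delta_{\mathbf r}^\vee}$, gives a minimal multigraded resolution of the quotient. The multigraded Betti numbers of $\SR{\Delta_{\mathbf r}^\vee}$ in homological degree $i\ge1$ are therefore those of $I_{\Delta_{\mathbf r}^\vee}$ in degree $i-1$.

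Since the resolution is minimal, $\beta_{i,A}$ is just the number of basis elements of multidegree $x_A$ in the $i$th module. This gives the two counting formulas directly. Re-indexing $q=m-1$ matches separators with the columns of $\partial_2$. The resolution has length two, so everything in homological degree at least three vanishes. I should note that $\partial_2$ has one column per $m\ge2$, so no extra degree-two generators appear.

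For the determination claims, the degree-one data gives the set $\{X\setminus S_m\}$. Each $A$ occurs at most once, since distinct facets have distinct complements. Complementing in $X$ recovers the facet set $\{S_1,\dots,S_e\}$, and hence the labeled complex $\Delta_{\mathbf r}$ as the complex they generate. Here $X$ is the fixed variable set of $R$, so complementation is legitimate. Similarly, the degree-two data gives the multiset of sets $X\setminus Q_q$, and complementing yields the separators with multiplicity.

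The only subtle point is that the multigraded Betti numbers are intrinsic. The resolution in Theorem~\ref{thm:4} depends on the chosen leaf order and branches, so I need independence from those choices. Minimality gives it: $\beta_{i,A}=\dim_{\K}\operatorname{Tor}_i(\SR{\Delta_{\mathbf r}^\vee},\K)_A$, which depends only on the module. Hence the separator multiset is independent of the leaf order and the branch choices, and this is what makes the final sentence of the statement meaningful. Beyond this, the proof is bookkeeping.
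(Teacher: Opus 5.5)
Your proposal is correct and follows the paper's argument. Both append $R\to\SR{\Delta_{\mathbf r}^\vee}$ to the minimal multigraded resolution of Theorem~\ref{thm:4}, read off the squarefree multidegrees $X\setminus S_m$ and $X\setminus Q_q$ of the basis elements, and use minimality to identify these counts with the multigraded Betti numbers. Your added remark that $\beta_{i,A}=\dim_{\K}\operatorname{Tor}_i(\SR{\Delta_{\mathbf r}^\vee},\K)_A$ is intrinsic is exactly the observation the paper uses in Corollary~\ref{cor:8} to show the separator multiset is independent of the leaf order.
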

	
	\begin{proof}
		Append $R\to\SR{\Delta_{\mathbf r}^\vee}\to0$ to the resolution in Theorem~\ref{thm:4}. The free summand indexed by $S_m$ then occurs in homological degree one and has squarefree multidegree $X\setminus S_m$. The summand indexed by $Q_q$ occurs in homological degree two and has squarefree multidegree $X\setminus Q_q$. Minimality prevents cancellation, so the multiplicities of these summands are the stated multigraded Betti numbers.
	\end{proof}
	
	\begin{corollary}\label{cor:8}
		Under the hypotheses and notation of Theorem~\ref{thm:4}, although the separators $Q_1,\ldots,Q_{e-1}$ are defined from a leaf order, their multiset as subsets of $X$ is independent of that order. Consequently, the multiset of separator cardinalities $\{r_1,\ldots,r_{e-1}\}$ is also independent of the leaf order.
	\end{corollary}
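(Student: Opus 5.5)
The plan is to read the separators off an invariant that does not mention any leaf order, namely the multigraded Betti numbers of $\SR{\Delta_{\mathbf r}^\vee}$. The complex $\Delta_{\mathbf r}$ on the vertex set $X$ depends only on the chordal graph, and not on how its facets are ordered. So its Alexander dual $\Delta_{\mathbf r}^\vee$, the ring $\SR{\Delta_{\mathbf r}^\vee}$, and all of its multigraded Betti numbers $\beta_{2,A}(\SR{\Delta_{\mathbf r}^\vee})$ for $A\subseteq X$ are intrinsic to the complex. Minimal multigraded free resolutions are unique up to isomorphism, so these numbers can be computed from any minimal resolution.

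Fix two leaf orders $S_1,\ldots,S_e$ and $S'_1,\ldots,S'_e$ of the same facets, with their chosen branches. Let the separators be $Q_1,\ldots,Q_{e-1}$ and $Q'_1,\ldots,Q'_{e-1}$. Applying Theorem~\ref{thm:4} to each order gives two minimal multigraded resolutions of the same ideal $I_{\Delta_{\mathbf r}^\vee}$. Corollary~\ref{cor:7} then gives, for every $A\subseteq X$,
\[
\bigl|\{q:X\setminus Q_q=A\}\bigr|
=\beta_{2,A}(\SR{\Delta_{\mathbf r}^\vee})
=\bigl|\{q:X\setminus Q'_q=A\}\bigr|.
\]
The map $A\mapsto X\setminus A$ is a bijection on subsets of $X$. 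Hence every subset $Q\subseteq X$ occurs the same number of times in the two lists, that is, the multisets $\{Q_q\}$ and $\{Q'_q\}$ coincide. Taking cardinalities then gives the equality of the multisets $\{r_q\}$.

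The only point needing care is that Corollary~\ref{cor:7} counts multigraded Betti numbers of the quotient, and these must be genuine invariants of the ring. Two facts supply this. First, the resolution in Theorem~\ref{thm:4} is minimal; the theorem shows this because every matrix entry is a monomial in the homogeneous maximal ideal, since adjacent facets differ. Second, multigraded Betti numbers can be computed intrinsically as $\dim_\K\operatorname{Tor}_i^R(\SR{\Delta_{\mathbf r}^\vee},\K)_A$. This includes empty separators, for which the multidegree is $X$ itself. With these two facts no further obstacle remains, and the argument also shows independence of the choice of branches $p(m)$.
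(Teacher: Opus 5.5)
Your proposal is correct and follows essentially the same route as the paper: you read the separators off the multigraded Betti numbers $\beta_{2,A}(\SR{\Delta_{\mathbf r}^\vee})$ via Corollary~\ref{cor:7}, take complements to recover the multiset $\{Q_q\}$, and then take cardinalities. Your remarks on minimality and on $\operatorname{Tor}$ only spell out what the paper summarizes as the multigraded Betti numbers being invariants of the labeled Stanley--Reisner ring.
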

	
	\begin{proof}
		The multigraded Betti numbers of $\SR{\Delta_{\mathbf r}^\vee}$ are invariants of the labeled Stanley--Reisner ring. By Corollary~\ref{cor:7}, the multiset of second syzygy multidegrees is $\{X\setminus Q_q:1\le q\le e-1\}$, with multiplicity. Taking complements determines the separator multiset, and taking cardinalities gives the second assertion. For connected chordal graphs, this agrees with the classical invariance of the separator multiset of a clique tree \cite[Theorem~4.2]{BlairPeyton}.
	\end{proof}
	
	\begin{corollary}\label{cor:9}
		Under the hypotheses and notation of Theorem~\ref{thm:4}, the nonzero graded Betti numbers of $\SR{\Delta_{\mathbf r}^\vee}$ are $\beta_{0,0}=1$ and
		\[
		\beta_{1,j}=\bigl|\{m:N_{\mathbf r}-n_m=j\}\bigr|,
		\qquad
		\beta_{2,j}=\bigl|\{m:N_{\mathbf r}-r_m=j\}\bigr|.
		\]
		In particular,
		\[
		\reg\SR{\Delta_{\mathbf r}^\vee}=N_{\mathbf r}-r_{\min}-2,
		\qquad
		\operatorname{type}\SR{\Delta_{\mathbf r}^\vee}=e-1,
		\qquad
		a(\SR{\Delta_{\mathbf r}^\vee})=-r_{\min}.
		\]
	\end{corollary}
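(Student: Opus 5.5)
The plan is to read everything off the explicit minimal resolution of Theorem~\ref{thm:4}. Appending $R\to\SR{\Delta_{\mathbf r}^\vee}\to0$ to that resolution gives a minimal free resolution of the quotient of length two. It has one summand $R$ in homological degree zero, summands $R(-(N_{\mathbf r}-n_m))$ in degree one, and summands $R(-(N_{\mathbf r}-r_m))$ in degree two. Because the resolution is minimal, there is no cancellation. So $\beta_{1,j}$ counts the $m$ with $N_{\mathbf r}-n_m=j$, and $\beta_{2,j}$ counts the separators with $N_{\mathbf r}-r_m=j$; all other Betti numbers vanish. Alternatively, this is the graded specialization of Corollary~\ref{cor:7}, obtained by summing $\beta_{i,A}$ over $|A|=j$.

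For the regularity, I would compute $\max(j-i)$ over the nonzero entries.
\begin{itemize}
\item Row one contributes $N_{\mathbf r}-n_m-1$.
\item Row two contributes $N_{\mathbf r}-r_m-2$.
\end{itemize}
I would then use the inequality $r_{\min}\le n_{\min}-1$ from the proof of Corollary~\ref{cor:3}. It gives $N_{\mathbf r}-n_m-1\le N_{\mathbf r}-r_{\min}-2$ for every $m$. Hence the maximum is $N_{\mathbf r}-r_{\min}-2$, which is attained by a separator of minimal size. The degree-zero term contributes $0$, and this is dominated because $N_{\mathbf r}-r_{\min}-2\ge n_{\max}-r_{\min}-1\ge0$.

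For the type, note that $\SR{\Delta_{\mathbf r}^\vee}$ is Cohen--Macaulay by Eagon--Reiner, as recorded before Proposition~\ref{prop:3}. Its projective dimension is $2$, consistent with Auslander--Buchsbaum and dimension $N_{\mathbf r}-2$. The type is therefore the rank of the last free module, namely $\sum_j\beta_{2,j}=e-1$.

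For the $a$-invariant, I would use the standard relation $a(M)=\reg M-\operatorname{depth}M$ for Cohen--Macaulay $M$. Equivalently, $a(M)$ is the largest shift in the last module minus $N_{\mathbf r}$. This gives $a=(N_{\mathbf r}-r_{\min})-N_{\mathbf r}=-r_{\min}$. As a consistency check, this also equals $\deg h-\dim$ computed from Proposition~\ref{prop:3}: the numerator $P(z)$ has top term $z^{N_{\mathbf r}-r_{\min}}$ with positive coefficient $\mu$, so $\deg h=N_{\mathbf r}-r_{\min}-2$.

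The only real subtlety is making sure the regularity maximum comes from row two and not from row one. This rests entirely on the inequality $r_{\min}\le n_m-1$ for every $m$, which holds because $e\ge2$ forces every facet to be incident with a relation-tree edge. I would cite it explicitly rather than rely on the cancellation-prone Hilbert numerator.
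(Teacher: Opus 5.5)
Your proposal is correct and follows essentially the same route as the paper. Like the paper, you read the graded Betti numbers off the minimal resolution of Theorem~\ref{thm:4} (equivalently Corollary~\ref{cor:7}), use $r_{\min}\le n_m-1$ to place the regularity in homological degree two, take the type to be the rank $e-1$ of the last free module, and obtain $a=\reg-\dim=-r_{\min}$; your extra checks on the degree-zero entry and on the Hilbert numerator are harmless additions.
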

	
	\begin{proof}
		The graded Betti numbers are obtained from Corollary~\ref{cor:7} by taking total degrees. As observed in the proof of Theorem~\ref{thm:2}, $r_{\min}\le n_m-1$ for every facet. Therefore
		\[
		N_{\mathbf r}-n_m-1\le N_{\mathbf r}-r_{\min}-2,
		\]
		so the largest value of $j-i$ is attained in homological degree two at a separator of minimum cardinality. This gives the regularity. Since the quotient is Cohen--Macaulay of codimension two, its type is the total rank of the last free module, namely $e-1$. Finally, for a Cohen--Macaulay standard graded algebra, $a=\reg-\dim$, and hence
		\[
		a(\SR{\Delta_{\mathbf r}^\vee})
		=(N_{\mathbf r}-r_{\min}-2)-(N_{\mathbf r}-2)
		=-r_{\min}.
		\]
	\end{proof}
	
	\begin{corollary}\label{cor:10}
		Under the hypotheses and notation of Theorem~\ref{thm:4}, let $A=\SR{\Delta_{\mathbf r}^\vee}$. Its canonical module has a minimal presentation
		\[
		\bigoplus_{m=1}^{e}R(-n_m)
		\longrightarrow
		\bigoplus_{q=1}^{e-1}R(-r_q)
		\longrightarrow
		\omega_A
		\longrightarrow0.
		\]
		Thus the minimal generator degrees of $\omega_A$ are $r_1,\ldots,r_{e-1}$. The ring $A$ is level if and only if $r_1=\cdots=r_{e-1}$, and the following conditions are equivalent:
		\begin{enumerate}
			\item[(i)] $A$ is Gorenstein;
			\item[(ii)] $I_{\Delta_{\mathbf r}^\vee}$ is a complete intersection;
			\item[(iii)] $e=2$.
		\end{enumerate}
	\end{corollary}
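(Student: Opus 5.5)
The canonical module comes from dualizing the resolution of Theorem~\ref{thm:4}. Since $A=R/I_{\Delta_{\mathbf r}^\vee}$ is Cohen--Macaulay of codimension two, we have $\omega_A\cong\operatorname{Ext}^2_R(A,R(-N_{\mathbf r}))$. Append $R\to A\to0$ to the resolution, giving
\[
0\to F_2=\bigoplus_{m=2}^eR(-(N_{\mathbf r}-r_{m-1}))\xrightarrow{\partial_2}F_1=\bigoplus_{m=1}^eR(-(N_{\mathbf r}-n_m))\xrightarrow{\partial_1}F_0=R.
\]
Apply $\operatorname{Hom}_R(-,R(-N_{\mathbf r}))$. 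This replaces $R(-a)$ by $R(a-N_{\mathbf r})$, so $F_2^*\cong\bigoplus_q R(-r_q)$ and $F_1^*\cong\bigoplus_m R(-n_m)$. Because $A$ is Cohen--Macaulay of codimension two, the dual complex is acyclic except at the end. Hence $\omega_A=\operatorname{coker}(\partial_2^{T}\colon F_1^*\to F_2^*)$.

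This presentation is minimal because the entries of $\partial_2$ lie in the maximal ideal, as noted in the proof of Theorem~\ref{thm:4}. Therefore $\omega_A$ is minimally generated in degrees $r_1,\ldots,r_{e-1}$, counted with multiplicity.

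For levelness, recall that $A$ is level exactly when $\omega_A$ is generated in a single degree. Equivalently, the last free module $F_2$ has all its shifts equal, i.e.\ $N_{\mathbf r}-r_q$ is constant. Either formulation gives the condition $r_1=\cdots=r_{e-1}$.

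For the Gorenstein criterion, Corollary~\ref{cor:9} gives $\operatorname{type}A=e-1$, which is the rank of $F_2$. Since $A$ is Cohen--Macaulay, it is Gorenstein if and only if its type is $1$, that is, $e=2$. This proves (i)$\Leftrightarrow$(iii).

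For (iii)$\Rightarrow$(ii): if $e=2$, then $I_{\Delta_{\mathbf r}^\vee}$ has two generators and height two, since $\dim A=N_{\mathbf r}-2$. So it is a complete intersection. Equivalently, $g_1=x_{X\setminus S_1}$ and $g_2=x_{X\setminus S_2}$ are coprime because $S_1\cup S_2=X$. For (ii)$\Rightarrow$(iii): a complete intersection of height two has exactly two minimal generators. The minimal generators are $g_1,\ldots,g_e$, so $e=2$. Alternatively, a complete intersection is Gorenstein, which gives (ii)$\Rightarrow$(i).

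The main point needing care is the duality step: identifying $\omega_A$ with the cokernel of the transposed matrix with exactly the stated twists, and confirming minimality. Both follow from the Cohen--Macaulay property and the explicit entries in Theorem~\ref{thm:4}. Everything else is bookkeeping of degrees.
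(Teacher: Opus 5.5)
Your proposal is correct and follows essentially the same route as the paper: you dualize the resolution of Theorem~\ref{thm:4} into $R(-N_{\mathbf r})$ to get $\omega_A=\operatorname{Ext}^2_R(A,R(-N_{\mathbf r}))$ with the stated twists, use the fact that the entries of $\partial_2$ lie in the maximal ideal for minimality, and read off levelness and the Gorenstein criterion from the generator degrees $r_q$ and the type $e-1$. Your direct argument for (ii)$\Rightarrow$(iii), via the fact that a height-two complete intersection has exactly two minimal generators, is a harmless supplement to the paper's route through (ii)$\Rightarrow$(i).
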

	
	\begin{proof}
		Append $R\to A\to0$ to the resolution in Theorem~\ref{thm:4} and apply $\operatorname{Hom}_R(-,R(-N_{\mathbf r}))$. Since $A$ is Cohen--Macaulay of codimension two, the resulting cokernel is
		\[
		\omega_A=\operatorname{Ext}^2_R(A,R(-N_{\mathbf r})).
		\]
		Dualizing changes the shifts $N_{\mathbf r}-n_m$ and $N_{\mathbf r}-r_q$ to $n_m$ and $r_q$, respectively, and gives the stated presentation. It is minimal because the entries of $\partial_2$ lie in the homogeneous maximal ideal.
		
		The canonical module therefore has one minimal generator in degree $r_q$ for each separator. It is generated in one degree exactly when all $r_q$ are equal, which is the level criterion. The number of generators is $e-1$, so the Cohen--Macaulay ring $A$ is Gorenstein exactly when $e=2$. In that case $I_{\Delta_{\mathbf r}^\vee}$ has two minimal generators and height two, hence is a complete intersection. Conversely, a complete intersection quotient is Gorenstein.
	\end{proof}
	
	For recent structural work on level Stanley--Reisner rings of codimension two, see Pournaki, Poursoltani, Terai and Yassemi \cite{PPTZ26}. We use only the direct resolution above and do not rely on their classification.
	
	\begin{corollary}\label{cor:11}
		Under the hypotheses and notation of Theorem~\ref{thm:4}, let $c$ be the number of connected components of $\Delta_{\mathbf r}$, equivalently of its $1$-skeleton. Then $\Delta_{\mathbf r}^\vee$ is shellable, and
		\[
		\widetilde H_q(\Delta_{\mathbf r}^\vee;\mathbb Z)=0
		\qquad(q\ne N_{\mathbf r}-3),
		\]
		while
		\[
		\widetilde H_{N_{\mathbf r}-3}(\Delta_{\mathbf r}^\vee;\mathbb Z)
		\cong\mathbb Z^{c-1}.
		\]
		If $N_{\mathbf r}\ge3$, then
		\[
		\Delta_{\mathbf r}^\vee\simeq\bigvee^{c-1}S^{N_{\mathbf r}-3}.
		\]
		In particular, the dual is contractible when $\Delta_{\mathbf r}$ is connected. Its reduced Euler characteristic is
		\[
		\widetilde\chi(\Delta_{\mathbf r}^\vee)=(-1)^{N_{\mathbf r}-3}(c-1).
		\]
	\end{corollary}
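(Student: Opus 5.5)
For shellability I would not work with the dual directly. The facets of $\Delta_{\mathbf r}^\vee$ are the complements $X\setminus\{u,v\}$ of the nonedges $\{u,v\}$ of the chordal graph $G$. By the Eagon--Reiner theorem, $\Delta_{\mathbf r}^\vee$ is shellable exactly when $I_{\Delta_{\mathbf r}}$ has linear quotients. This is the edge ideal of the complement $\overline G$, which is co-chordal, so it has linear quotients; that is the standard strengthening of Fr\"oberg's theorem. Alternatively I would order the nonedges by a perfect elimination ordering of $G$ and check the shelling condition directly. Shellability gives that the reduced homology is free and concentrated in the top degree $\dim\Delta_{\mathbf r}^\vee=N_{\mathbf r}-3$. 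It also gives $\Delta_{\mathbf r}^\vee\simeq\bigvee S^{N_{\mathbf r}-3}$, with the number of spheres equal to the top reduced Betti number, i.e.\ the number of homology facets of the shelling.

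The rank count should not need the shelling at all. One option is combinatorial Alexander duality, $\widetilde H_q(\Delta_{\mathbf r}^\vee;\mathbb Z)\cong\widetilde H^{N_{\mathbf r}-q-3}(\Delta_{\mathbf r};\mathbb Z)$. By Theorem~\ref{thm:3}, every component of $\Delta_{\mathbf r}$ is contractible, so $\Delta_{\mathbf r}$ has reduced cohomology $\mathbb Z^{c-1}$ in degree $0$ and nothing else. This gives homology only in degree $q=N_{\mathbf r}-3$, of rank $c-1$, over $\mathbb Z$ and without torsion. It is an independent check on the shellability conclusion. A second option is Hochster's formula at $W=X$: $\beta_{2,X}(\SR{\Delta_{\mathbf r}^\vee})=\dim\widetilde H_{N_{\mathbf r}-3}(\Delta_{\mathbf r}^\vee;\K)$. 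By Corollary~\ref{cor:7}, this equals the number of separators $Q_q$ with $X\setminus Q_q=X$, i.e.\ the empty separators. That number is $c-1$, as shown in the proof of Theorem~\ref{thm:3}.

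For the homotopy type when $N_{\mathbf r}\ge3$, the shellable complex is a wedge of $(N_{\mathbf r}-3)$-spheres, and the number of spheres is the rank just computed, $c-1$. For $c=1$ this says the dual is contractible. I also have to confirm that $N_{\mathbf r}\ge3$ makes the dimension count sensible, and that the boundary case $N_{\mathbf r}-3=0$ (a wedge of $S^0$ means $c$ points) agrees with the same formula. The Euler characteristic then follows at once from the homology: $\widetilde\chi=(-1)^{N_{\mathbf r}-3}(c-1)$. Proposition~\ref{prop:3} gives a consistency check, since $\widetilde\chi(\Delta_{\mathbf r}^\vee)=(-1)^{N_{\mathbf r}-1}P(1)$-type evaluations reduce to counting the facet and separator terms with $n_m$ or $r_m$ equal to $0$.

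The main obstacle is making the shellability claim rigorous, i.e.\ citing or verifying linear quotients for the edge ideal of a co-chordal graph, because the homology statements themselves come cleanly from Alexander duality.
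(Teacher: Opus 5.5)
Your proposal is correct and is essentially the paper's argument: linear quotients for the co-chordal edge ideal $I_{\Delta_{\mathbf r}}$ give shellability of $\Delta_{\mathbf r}^\vee$, combinatorial Alexander duality together with Theorem~\ref{thm:3} gives the integral homology, and the wedge-of-spheres property of pure shellable complexes gives the homotopy type, while your Hochster-formula count via Corollary~\ref{cor:7} is a valid extra check on the rank $c-1$. Two small corrections: the equivalence between shellability of $\Delta_{\mathbf r}^\vee$ and linear quotients of $I_{\Delta_{\mathbf r}}$ is \cite[Theorem~1.4(c)]{HHZ04}, not the Eagon--Reiner theorem (which concerns Cohen--Macaulayness versus linear resolutions), and in your Euler characteristic check the relevant quantity is $(-1)^{N_{\mathbf r}-1}$ times the coefficient of $z^{N_{\mathbf r}}$ in $P(z)$, rather than $P(1)$, which is $0$.
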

	
	\begin{proof}
		The ideal $I_{\Delta_{\mathbf r}}$ is a quadratic monomial ideal with a linear resolution. Such an ideal has linear quotients by \cite{HHZPowers}. Since $I_{(\Delta_{\mathbf r}^\vee)^\vee}=I_{\Delta_{\mathbf r}}$, the equivalence between shellability and linear quotients under Alexander duality shows that $\Delta_{\mathbf r}^\vee$ is shellable \cite[Theorem~1.4(c)]{HHZ04}.
		
		By Theorem~\ref{thm:3}, every connected component of $\Delta_{\mathbf r}$ is contractible. Hence $\Delta_{\mathbf r}$ is homotopy equivalent to a disjoint union of $c$ points, and
		\[
		\widetilde H^j(\Delta_{\mathbf r};\mathbb Z)=0\quad(j\ne0),
		\qquad
		\widetilde H^0(\Delta_{\mathbf r};\mathbb Z)\cong\mathbb Z^{c-1}.
		\]
		Combinatorial Alexander duality \cite{BjornerTancer09} gives
		\[
		\widetilde H_q(\Delta_{\mathbf r}^\vee;\mathbb Z)
		\cong
		\widetilde H^{N_{\mathbf r}-q-3}(\Delta_{\mathbf r};\mathbb Z),
		\]
		which proves the homology statement.
		
		A pure shellable complex is homotopy equivalent to a wedge of spheres in its top dimension \cite[Section~11]{Bjorner95}. The dual has dimension $N_{\mathbf r}-3$, and the number of spheres is its top Betti number, namely $c-1$. This proves the homotopy statement when the topological dimension is at least zero. The Euler characteristic follows from the homology calculation.
	\end{proof}
	
	\begin{corollary}\label{cor:12}
		Under the hypotheses and notation of Theorem~\ref{thm:4}, the resolution of the ideal $I_{\Delta_{\mathbf r}^\vee}$ is pure if and only if
		\[
		n_1=\cdots=n_e=n
		\qquad\text{and}\qquad
		r_1=\cdots=r_{e-1}=r.
		\]
		In this case, it is linear if and only if $r=n-1$.
	\end{corollary}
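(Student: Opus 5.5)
The proof reads everything off the graded Betti numbers in Corollary~\ref{cor:9}. By Theorem~\ref{thm:4}, the minimal resolution of the ideal $I_{\Delta_{\mathbf r}^\vee}$ has two nonzero modules. The module in homological degree zero, which is $F_0$ for the ideal, has shifts $N_{\mathbf r}-n_m$ for $m=1,\ldots,e$. The module in homological degree one, $F_1$, has shifts $N_{\mathbf r}-r_q$ for $q=1,\ldots,e-1$. By minimality, these shifts are exactly the degrees carrying nonzero Betti numbers. The resolution is pure exactly when each of these two modules is generated in a single degree. That happens if and only if all $N_{\mathbf r}-n_m$ coincide and all $N_{\mathbf r}-r_q$ coincide. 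Since $N_{\mathbf r}$ is fixed, this is equivalent to $n_1=\cdots=n_e$ and $r_1=\cdots=r_{e-1}$, which gives the first equivalence. The hypothesis $e\ge2$ guarantees that $F_1\neq0$, so both conditions carry content.

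For linearity, assume the pure case with common values $n$ and $r$. The ideal is generated in degree $N_{\mathbf r}-n$, and its first syzygies sit in degree $N_{\mathbf r}-r$. The resolution is linear precisely when the syzygy degree exceeds the generator degree by exactly one, that is, when $(N_{\mathbf r}-r)-(N_{\mathbf r}-n)=1$, or $r=n-1$. I would note that $r\le n-1$ always holds, since each separator is a proper subset of an adjacent facet, as shown in the proof of Theorem~\ref{thm:2}. So the alternative is the strict inequality $r<n-1$, in which case the gap is at least two and the resolution is not linear.

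There is no real obstacle here. The only care needed is to use the minimality from Theorem~\ref{thm:4} in the form of Corollary~\ref{cor:9}, so that no shift can cancel. That is exactly what makes the set of shifts equal to the set of degrees with nonzero Betti numbers. As a consistency check, the linear case agrees with Eagon--Reiner duality and with Corollary~\ref{cor:4}(ii): in that case $\SR{\Delta_{\mathbf r}}$ is Cohen--Macaulay, so $I_{\Delta_{\mathbf r}^\vee}$ has a linear resolution.
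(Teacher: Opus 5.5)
Your proposal is correct and follows the paper's argument: purity means a single shift in each of the two free modules of the minimal resolution in Theorem~\ref{thm:4}, which is equivalent to all $n_m$ being equal and all $r_q$ being equal. Linearity is then the condition $(N_{\mathbf r}-r)-(N_{\mathbf r}-n)=1$; your extra remarks ($r\le n-1$, and the Eagon--Reiner consistency check via Corollary~\ref{cor:4}(ii)) are correct but not needed.
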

	
	\begin{proof}
		Purity of the resolution in Theorem~\ref{thm:4} requires one common shift in each of its two free modules. Thus all $N_{\mathbf r}-n_m$ must be equal and all $N_{\mathbf r}-r_q$ must be equal, which is precisely the stated uniformity. Under this assumption, the resolution is linear if and only if
		\[
		(N_{\mathbf r}-r)-(N_{\mathbf r}-n)=1,
		\]
		that is, if and only if $r=n-1$.
	\end{proof}
	
	\section{Skeletons of the Alexander dual}\label{sec:6}
	
	We first simplify the finite alternating sum that occurs in the additional row of a skeleton resolution.
	
	\begin{lemma}\label{lem:2}
		Let $N,A,i,k$ be integers with $0\le A\le N$, $i\ge1$, and $k\ge-1$, and put $s=i+k+1\le N$. Then
		\[
		\sum_{q=0}^{k+1}(-1)^{k+1-q}
		\binom{N-q}{s-q}
		\binom{A}{N-q}
		=
		\binom{A}{N-s}
		\binom{A-N+s-1}{i-1}.
		\]
	\end{lemma}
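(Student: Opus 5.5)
The plan is to collapse the product of binomials in each summand into a single $q$-dependent binomial times a factor independent of $q$, and then evaluate the resulting partial alternating sum of a row of Pascal's triangle.

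\emph{Step 1 (trinomial revision).} Put $M=A-N+s$. I would first show that for $0\le q\le k+1$,
\[
\binom{N-q}{s-q}\binom{A}{N-q}=\binom{A}{N-s}\binom{M}{s-q}.
\]
When $A\ge N-q$, this is the identity $\binom{A}{b}\binom{b}{c}=\binom{A}{c}\binom{A-c}{b-c}$ with $b=N-q$ and $c=N-s$. Here $\binom{N-q}{s-q}=\binom{N-q}{N-s}$ because $s\le N$.

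The degenerate cases must be checked against the paper's convention that $\binom ab=0$ for $a<0$, $b<0$, or $b>a$.
\begin{itemize}
\item If $A<N-s$, then $\binom{A}{N-s}=0$. On the left, $\binom{A}{N-q}=0$ because $N-q\ge N-s>A$. So both sides vanish.
\item If $A\ge N-s$ but $A<N-q$, the left side is $0$. On the right, $M\ge0$ and $s-q>M$, so $\binom{M}{s-q}=0$.
\end{itemize}

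\emph{Step 2 (reindexing).} Factoring out $\binom{A}{N-s}$ and substituting $p=s-q$, the sign becomes $(-1)^{k+1-q}=(-1)^{p-i}$, since $k+1=s-i$. As $q$ runs over $0,\dots,k+1$, $p$ runs over $i,\dots,s$. The left-hand side therefore equals
\[
\binom{A}{N-s}\sum_{p=i}^{s}(-1)^{p-i}\binom{M}{p}.
\]
It suffices to treat $M\ge0$, since otherwise the prefactor already vanishes. In that case $M=A-N+s\le s$ because $A\le N$, so the terms with $p>M$ vanish and the upper limit can be replaced by $M$.

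\emph{Step 3 (partial alternating sum).} I would use the standard identity
\[
\sum_{p=0}^{j}(-1)^p\binom{M}{p}=(-1)^j\binom{M-1}{j}\qquad(M\ge1),
\]
which follows by induction on $j$ from Pascal's rule. Combined with $\sum_{p=0}^{M}(-1)^p\binom Mp=0$ for $M\ge1$, it gives
\[
\sum_{p=i}^{M}(-1)^{p-i}\binom Mp=\binom{M-1}{i-1}.
\]
This holds also when $i>M$, since then both sides are $0$. Writing $M-1=A-N+s-1$ yields the stated right-hand side.

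The only remaining case is $M=0$. There the sum over $p\ge i\ge1$ is empty, and $\binom{-1}{i-1}=0$ by the convention, so the identity still holds.

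\emph{Expected obstacle.} The algebra is short. The main risk is the bookkeeping of boundary cases under the nonstandard convention that binomials with negative top entry are zero. Specifically, trinomial revision must be checked when $N-q>A$, and the alternating-sum formula must be checked at $M=0$ and when $i>M$. The case $k=-1$ (so $s=i$) needs no separate treatment: the sum has the single term $q=0$, and it falls under the same argument.
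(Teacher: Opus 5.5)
Your proof is correct and follows essentially the same route as the paper: it also factors each summand as $\binom{A}{N-q}\binom{N-q}{s-q}=\binom{A}{N-s}\binom{A-N+s}{s-q}$, reindexes to a partial alternating sum along the row $\binom{B}{\cdot}$ with $B=A-N+s$ (your $M$), and evaluates that sum by the Pascal telescoping identity, using $A\le N$ to kill the upper tail. The differences are cosmetic. The paper discards the boundary term $(-1)^{k+1}\binom{B-1}{i+k+1}$ directly, whereas you truncate at $M\le s$ and subtract from the vanishing full row sum. Your explicit check of the degenerate cases of trinomial revision under the paper's binomial convention is extra care that the paper leaves implicit.
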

	
	\begin{proof}
		Set $Q=N-s$. The identity
		\[
		\binom{A}{N-q}\binom{N-q}{s-q}
		=
		\binom AQ\binom{A-Q}{s-q}
		\]
		transforms the left side into
		\[
		\binom AQ\sum_{u=0}^{k+1}(-1)^u\binom{A-Q}{i+u},
		\qquad u=k+1-q.
		\]
		If $A\le Q$, both sides vanish under our binomial convention. Assume $A>Q$ and put $B=A-Q\ge1$. The finite alternating sum satisfies
		\[
		\sum_{u=0}^{k+1}(-1)^u\binom B{i+u}
		=
		\binom{B-1}{i-1}+(-1)^{k+1}\binom{B-1}{i+k+1}.
		\]
		Since $A\le N$, we have $B-1\le s-1=i+k$, so the last term is zero. The remaining term is
		\[
		\binom AQ\binom{B-1}{i-1}
		=
		\binom{A}{N-s}
		\binom{A-N+s-1}{i-1},
		\]
		as required.
	\end{proof}
	
	\begin{theorem}\label{thm:5}
		Let $\Delta_{\mathbf r}=\Delta_{\mathbf r}(n_1,\ldots,n_e)$ be the clique complex of a chordal graph with at least two maximal cliques $S_1,\ldots,S_e$ in a fixed leaf order. Put $n_m=|S_m|$, let $r_1,\ldots,r_{e-1}$ be the corresponding separator cardinalities.
		Let $-1\le k<N_{\mathbf r}-3$. The graded Betti numbers of $\SR{(\Delta_{\mathbf r}^\vee)_{(k)}}$ satisfy the following statements.
		\begin{enumerate}
			\item[(i)] If $j<k+1$, then
			\[
			\beta_{i,i+j}\bigl(\SR{(\Delta_{\mathbf r}^\vee)_{(k)}}\bigr)
			=
			\beta_{i,i+j}(\SR{\Delta_{\mathbf r}^\vee}).
			\]
			\item[(ii)] If $j>k+1$, then
			\[
			\beta_{i,i+j}\bigl(\SR{(\Delta_{\mathbf r}^\vee)_{(k)}}\bigr)=0.
			\]
			\item[(iii)] For every $i\ge1$, the row $j=i+k+1$ is
			\begin{align*}
				\beta_{i,i+k+1}\bigl(\SR{(\Delta_{\mathbf r}^\vee)_{(k)}}\bigr)
				&=
				\binom{N_{\mathbf r}}{i+k+1}\binom{i+k}{k+1}\\
				&\quad-
				\sum_{m=1}^e
				\binom{n_m}{N_{\mathbf r}-i-k-1}
				\binom{n_m-N_{\mathbf r}+i+k}{i-1}\\
				&\quad+
				\sum_{m=1}^{e-1}
				\binom{r_m}{N_{\mathbf r}-i-k-1}
				\binom{r_m-N_{\mathbf r}+i+k}{i-1}\\
				&\quad+
				\delta_{i,1}\bigl|\{m:N_{\mathbf r}-r_m=k+2\}\bigr|.
			\end{align*}
		\end{enumerate}
	\end{theorem}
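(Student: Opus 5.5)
Parts (i) and (ii) come directly from Proposition~\ref{prop:1} applied to $\Sigma=\Delta_{\mathbf r}^\vee$. Its hypothesis $0\le k<\dim\Sigma$ holds because $\dim\Delta_{\mathbf r}^\vee=N_{\mathbf r}-3$, by Cohen--Macaulayness of dimension $N_{\mathbf r}-2$. For $k=-1$, (i) is vacuous and (ii) follows directly from Hochster's formula, since every induced subcomplex of $\{\varnothing\}$ has reduced homology only in degree $-1$. So the substance is part (iii). There I would use Proposition~\ref{prop:1}(iii), with $s=i+k+1$ and $N=N_{\mathbf r}$:
\[
\beta_{i,s}=(-1)^i\sum_{q=0}^{k+1}(-1)^{s-q}\binom{N-q}{s-q}f_{q-1}(\Delta_{\mathbf r}^\vee)-(-1)^i\sum_{i'>i}(-1)^{i'}\beta_{i',s}(\SR{\Delta_{\mathbf r}^\vee}).
\]
This formula is obtained by coefficient comparison, so it also holds for $k=-1$ (then (ii) kills $i'<i$). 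Note that $(-1)^{i+s-q}=(-1)^{k+1-q}$.

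The correction term comes from Corollary~\ref{cor:9}: the only Betti numbers of the dual in positive homological degree are $\beta_{1,N-n_m}$ and $\beta_{2,N-r_m}$. Since $i'>i\ge1$, only $i'=2$ with $i=1$ can contribute, and it requires $s=k+2=N-r_m$. This gives exactly $\delta_{i,1}|\{m:N-r_m=k+2\}|$ with sign $-(-1)^1(-1)^2=+1$, as claimed.

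For the face sum I would substitute Proposition~\ref{prop:3} in the form $f_{q-1}(\Delta_{\mathbf r}^\vee)=\binom Nq-\sum_m\binom{n_m}{N-q}+\sum_m\binom{r_m}{N-q}$. This is valid for $0\le q\le k+1\le N-2$, which is within the range $-1\le q-1\le N-3$ of Proposition~\ref{prop:3}. The sum then splits into three pieces, each of the shape $\sum_{q=0}^{k+1}(-1)^{k+1-q}\binom{N-q}{s-q}\binom{A}{N-q}$.

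\begin{itemize}
\item For $A=N$ I would use $\binom NQ=\binom N{N-Q}$ with Lemma~\ref{lem:2}. This gives $\binom{N}{N-s}\binom{s-1}{i-1}=\binom N{i+k+1}\binom{i+k}{k+1}$.
\item For $A=n_m$ and $A=r_m$, the condition $0\le A\le N$ holds and Lemma~\ref{lem:2} yields $\binom{A}{N-s}\binom{A-N+s-1}{i-1}$. These are precisely the facet and separator terms in (iii).
\end{itemize}

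The main point to check is Lemma~\ref{lem:2}'s hypothesis $s\le N$. When $s>N$, the Betti number vanishes trivially. The right-hand side must then be checked to vanish as well: $\binom N{s}=0$, and $\binom{A}{N-s}=0$ because $N-s<0$. The $\delta$ term is also zero, since $k+2\le N-r_m$ forces $s\le N$. I would also double-check the sign bookkeeping in the $\delta$ term and the boundary case $i=1$ of the lemma, where $\binom{B-1}{0}=1$ requires $B\ge1$. That case is covered by the convention that the terms vanish when $A\le N-s$.
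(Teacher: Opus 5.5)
Your proposal is correct and follows the paper's proof. You use Proposition~\ref{prop:1} for (i)--(ii), then Proposition~\ref{prop:1}(iii), with the correction term read off from Corollary~\ref{cor:9} and the face sum split via Proposition~\ref{prop:3} into three alternating sums evaluated by Lemma~\ref{lem:2}; your extra care also fills small points the paper leaves implicit: treating $k=-1$ uniformly through the coefficient comparison rather than the Koszul resolution, obtaining the simplex term from Lemma~\ref{lem:2} via $\binom{N_{\mathbf r}}{q}=\binom{N_{\mathbf r}}{N_{\mathbf r}-q}$, and checking that both sides vanish when $i+k+1>N_{\mathbf r}$, where Lemma~\ref{lem:2} does not apply.
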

	
	\begin{proof}
		For $k=-1$, the skeleton is $\{\varnothing\}$, and the statement reduces to the Koszul resolution of $\K$. Assume $k\ge0$. Parts~(i) and~(ii) are Proposition~\ref{prop:1}(i)--(ii). We calculate the remaining row.
		
		Put $s=i+k+1$. Proposition~\ref{prop:1}(iii) gives
		\begin{align*}
			\beta_{i,s}\bigl(\SR{(\Delta_{\mathbf r}^\vee)_{(k)}}\bigr)
			&=
			(-1)^i\sum_{q=0}^{k+1}(-1)^{s-q}
			\binom{N_{\mathbf r}-q}{s-q}f_{q-1}(\Delta_{\mathbf r}^\vee)\\
			&\quad-
			(-1)^i\sum_{i'>i}(-1)^{i'}
			\beta_{i',s}(\SR{\Delta_{\mathbf r}^\vee}).
		\end{align*}
		By Corollary~\ref{cor:9}, the second sum can be nonzero only when $i=1$ and $i'=2$. Its contribution is therefore
		\[
		\delta_{i,1}\beta_{2,k+2}(\SR{\Delta_{\mathbf r}^\vee})
		=
		\delta_{i,1}\bigl|\{m:N_{\mathbf r}-r_m=k+2\}\bigr|.
		\]
		
		For the first sum, use
		\[
		f_{q-1}(\Delta_{\mathbf r}^\vee)
		=
		\binom{N_{\mathbf r}}q-f_{N_{\mathbf r}-q-1}(\Delta_{\mathbf r}).
		\]
		The full simplex contributes
		\[
		\binom{N_{\mathbf r}}{i+k+1}\binom{i+k}{k+1},
		\]
		the standard Betti number of a simplex skeleton. Proposition~\ref{prop:2} writes the second term as a signed sum of binomial coefficients $\binom{A}{N_{\mathbf r}-q}$, with $A=n_m$ or $A=r_m$. Lemma~\ref{lem:2} evaluates each corresponding alternating sum. Substitution, with the signs from Proposition~\ref{prop:2}, gives part~(iii).
	\end{proof}
	
	The next corollary collects the algebraic and topological consequences.
	
	\begin{corollary}\label{cor:13}
		Under the hypotheses and notation of Theorem~\ref{thm:5}, for every $-1\le k<N_{\mathbf r}-3$, set
		\[
		\tau_k=
		\binom{N_{\mathbf r}-1}{k+1}
		-
		\sum_{m=1}^e\binom{n_m-1}{N_{\mathbf r}-k-2}
		+
		\sum_{m=1}^{e-1}\binom{r_m-1}{N_{\mathbf r}-k-2}
		\]
		and $A_k=\SR{(\Delta_{\mathbf r}^\vee)_{(k)}}$. Then $A_k$ is Cohen--Macaulay, and hence initially Cohen--Macaulay, with
		\[
		\dim A_k=k+1,
		\qquad
		\pdim A_k=N_{\mathbf r}-k-1,
		\qquad
		\operatorname{type}A_k=\tau_k.
		\]
		It is a level ring. For $k\ge0$, its $a$-invariant is $0$, the complex $(\Delta_{\mathbf r}^\vee)_{(k)}$ is shellable, and
		\[
		(\Delta_{\mathbf r}^\vee)_{(k)}
		\simeq
		\bigvee^{\tau_k}S^k.
		\]
		Consequently,
		\[
		\widetilde H_q((\Delta_{\mathbf r}^\vee)_{(k)};\mathbb Z)
		\cong
		\begin{cases}
			\mathbb Z^{\tau_k},&q=k,\\
			0,&q\ne k,
		\end{cases}
		\qquad(k\ge0),
		\]
		and the same ranks hold over every field.
	\end{corollary}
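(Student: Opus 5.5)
We begin with Cohen--Macaulayness. By the Eagon--Reiner discussion opening Section~\ref{sec:5}, $\SR{\Delta_{\mathbf r}^\vee}$ is Cohen--Macaulay of dimension $N_{\mathbf r}-2$, so $\Delta_{\mathbf r}^\vee$ has dimension $N_{\mathbf r}-3$. For $-1\le k<N_{\mathbf r}-3$ the skeleton $(\Delta_{\mathbf r}^\vee)_{(k)}$ is a proper skeleton of a Cohen--Macaulay complex. The depth identity
\[
\operatorname{depth}\SR{\Sigma}=1+\max\{q:\SR{\Sigma_{(q)}}\text{ is CM}\}
\]
used in the proof of Theorem~\ref{thm:2} then shows that $A_k$ is Cohen--Macaulay. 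Alternatively, Corollary~\ref{cor:11} gives shellability of $\Delta_{\mathbf r}^\vee$, and skeletons of shellable complexes are shellable. This second route gives shellability for $k\ge0$ directly.

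Because $A_k$ is Cohen--Macaulay, $\dim A_k=k+1$, and Auslander--Buchsbaum gives $\pdim A_k=N_{\mathbf r}-k-1$. The complex is pure, so $\indim A_k=\dim A_k$, and hence $A_k$ is initially Cohen--Macaulay.

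For levelness and the type we read off the last column of Theorem~\ref{thm:5}. Take $i=N_{\mathbf r}-k-1$, so that $s=N_{\mathbf r}$. Parts (i)--(ii) restrict the nonzero entries to rows $j\le k+1$. In rows $j<k+1$ the entries agree with those of $\SR{\Delta_{\mathbf r}^\vee}$. These vanish in homological degree $N_{\mathbf r}-k-1\ge3$, because $k<N_{\mathbf r}-3$ and Corollary~\ref{cor:9} says the resolution stops at degree $2$. Hence the last free module is concentrated in degree $N_{\mathbf r}$, which gives levelness. The type is then
\[
\beta_{N_{\mathbf r}-k-1,N_{\mathbf r}}(A_k)=\dim\widetilde H_k((\Delta_{\mathbf r}^\vee)_{(k)};\K)
\]
by Hochster's formula with $W=X$.

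To compute this number we evaluate part (iii) at $i=N_{\mathbf r}-k-1$.
\begin{itemize}
\item The simplex term becomes $\binom{N_{\mathbf r}-1}{k+1}$.
\item Each facet term becomes $\binom{n_m}{0}\binom{n_m-1}{N_{\mathbf r}-k-2}$.
\item Each separator term becomes $\binom{r_m-1}{N_{\mathbf r}-k-2}$.
\item The $\delta_{i,1}$ term drops out, since $i\ge3$.
\end{itemize}
This is exactly $\tau_k$. A small caveat arises for $r_m=0$: under the paper's convention $\binom{-1}{\cdot}=0$, and $\binom{A}{0}=1$ even when $A=0$. This is consistent because Lemma~\ref{lem:2} was proved with that convention.

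The $a$-invariant follows from
\[
a=\reg A_k-\dim A_k=(k+1)-(k+1)=0.
\]
Here the regularity is $k+1$: row $k+1$ is the top possible row, and it is nonzero once $\tau_k>0$, or already in homological degree $1$. We still need to check that $\tau_k>0$ for $k\ge0$.

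For the topology, a pure shellable $k$-dimensional complex is homotopy equivalent to a wedge of $k$-spheres \cite[Section~11]{Bjorner95}. Its number of spheres equals the rank of the free group $\widetilde H_k$. That rank can be computed over any field through Hochster's formula, which gives $\tau_k$. The integral statement follows because the homology of a wedge of spheres is free.

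The main obstacle is $\tau_k>0$, together with the edge bookkeeping at $k=-1$ and for empty separators. If $\tau_k=0$ the wedge is empty, meaning a contractible complex, and the regularity claim is affected but nothing else is. We therefore aim to justify $\tau_k\ge1$ for $k\ge0$ from the $f$-vector of the pure shellable skeleton: $\tau_k$ equals the $h$-vector entry $h_{k+1}$ of the skeleton, and this is positive for any proper skeleton, since its $h$-vector entries are partial sums of a positive $h$-vector. If that route stalls, we would state the $a$-invariant claim only where $\tau_k>0$.
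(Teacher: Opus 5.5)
Your proposal is correct and follows essentially the paper's route. Cohen--Macaulayness comes from taking a skeleton of a Cohen--Macaulay (indeed shellable) complex; type and levelness come from evaluating Theorem~\ref{thm:5}(iii) at $i=N_{\mathbf r}-k-1$, where the inherited rows vanish because $\pdim\SR{\Delta_{\mathbf r}^\vee}=2$; and the topology comes from shellability together with Hochster's formula at $W=X$. The positivity of $\tau_k$ that you flag as the main obstacle is automatic: $\pdim A_k=N_{\mathbf r}-k-1$ and the last free module is concentrated in degree $N_{\mathbf r}$, so $\tau_k=\beta_{N_{\mathbf r}-k-1,N_{\mathbf r}}(A_k)\neq0$, and your $h$-vector argument, though valid, is not needed.
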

	
	\begin{proof}
		A skeleton of a Cohen--Macaulay complex is Cohen--Macaulay \cite[Exercise~5.1.23]{BH98}. Thus $A_k$ has dimension $k+1$ and projective dimension $N_{\mathbf r}-k-1$. Its type is the total Betti number in the last homological degree. Substitute $i=N_{\mathbf r}-k-1$ into Theorem~\ref{thm:5}(iii). The Kronecker term vanishes because $i\ge3$, and the remaining terms give $\tau_k$.
		
		At this last homological degree, the Betti rows inherited from $\SR{\Delta_{\mathbf r}^\vee}$ do not occur because that ring has projective dimension two. Hence the last free module in the resolution of $A_k$ is $R(-N_{\mathbf r})^{\tau_k}$. After dualizing with respect to $R(-N_{\mathbf r})$, the canonical module is generated in degree zero. Thus $A_k$ is level. This levelness is also a special case of Hibi's general level theorem for proper skeletons of Cohen--Macaulay complexes \cite{HibiLevel}; the present calculation additionally identifies the last shift and the type explicitly. When $k\ge0$, the equality $a(A_k)=\reg A_k-\dim A_k=0$ also follows from Corollary~\ref{cor:15} below.
		
		Corollary~\ref{cor:11} shows that $\Delta_{\mathbf r}^\vee$ is shellable. Every proper pure skeleton of a shellable complex is shellable \cite[Lemma~1.6]{HHZ04}. A pure shellable $k$-dimensional complex is homotopy equivalent to a wedge of $k$-spheres \cite[Section~11]{Bjorner95}. Hochster's formula on the full vertex set identifies the number of spheres with the last Betti number $\beta_{N_{\mathbf r}-k-1,N_{\mathbf r}}(A_k)=\tau_k$. The integral homology statement follows from this homotopy equivalence.
		
	\end{proof}
	
	\begin{corollary}\label{cor:14}
		With $\tau_k$ and $A_k$ as in Corollary~\ref{cor:13}, let $G$ be the chordal graph with $\Delta_{\mathbf r}=\operatorname{Cl}(G)$. For $0\le k\le N_{\mathbf r}-5$,
		\[
		\tau_k\ge\binom{N_{\mathbf r}-3}{k+1}\ge2.
		\]
		If $N_{\mathbf r}\ge4$ and $c$ is the number of connected components of $G$, then at the last proper skeleton
		\[
		\tau_{N_{\mathbf r}-4}=|E(\overline G)|-c+1.
		\]
		For $k\ge0$, the following conditions are equivalent:
		\begin{enumerate}
			\item[(i)] $A_k$ is Gorenstein;
			\item[(ii)] $\tau_k=1$;
			\item[(iii)] $k=N_{\mathbf r}-4$, $e=2$, $n_1=n_2=N_{\mathbf r}-1$, and $r_1=N_{\mathbf r}-2$;
			\item[(iv)] $\Delta_{\mathbf r}^\vee$ is a simplex on $N_{\mathbf r}-2$ vertices, with the remaining two vertices ghost, and $(\Delta_{\mathbf r}^\vee)_{(k)}$ is the boundary of that simplex;
			\item[(v)] $k=N_{\mathbf r}-4$ and $G=K_{N_{\mathbf r}}\setminus\{uv\}$ for some distinct vertices $u,v$.
		\end{enumerate}
		In every other case with $k\ge0$, $\tau_k\ge2$.
	\end{corollary}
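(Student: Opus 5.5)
The plan has three parts: the lower bound via a comparison complex, the last proper skeleton via an Euler characteristic count, and the Gorenstein classification by combining the two.

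\emph{Lower bound.} I would not attack the binomial expression for $\tau_k$ directly. Instead I would use its topological meaning from Corollary~\ref{cor:13}: $\tau_k=\dim_{\K}\widetilde H_k((\Delta_{\mathbf r}^\vee)_{(k)};\K)$. Since $e\ge2$, $G$ is not complete, so choose a nonedge $uv$ of $G$. Then $\Delta_{\mathbf r}\subseteq\Gamma$, where $\Gamma$ is the complex with the two facets $X\setminus\{u\}$ and $X\setminus\{v\}$. Passing to duals reverses inclusion, so $\Gamma^\vee\subseteq\Delta_{\mathbf r}^\vee$ and hence $(\Gamma^\vee)_{(k)}\subseteq(\Delta_{\mathbf r}^\vee)_{(k)}$.

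Both skeletons have dimension $k$, so $\widetilde H_k$ is just the group of $k$-cycles. The inclusion therefore induces an injection on $\widetilde H_k$, which gives $\tau_k(\Delta_{\mathbf r})\ge\tau_k(\Gamma)$.

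The complex $\Gamma$ is itself of the form $\Delta_{\mathbf r}(N-1,N-1)$ with $r_1=N-2$, so Corollary~\ref{cor:13} applies to it. Put $a=N_{\mathbf r}-k-2$. Then
\[
\tau_k(\Gamma)=\binom{N-1}{a}-2\binom{N-2}{a}+\binom{N-3}{a}.
\]
Two applications of Pascal's rule reduce this to $\binom{N-3}{a-2}=\binom{N-3}{k+1}$. For $0\le k\le N_{\mathbf r}-5$ we have $1\le k+1\le N-4$, so this binomial is at least $N-3\ge2$.

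\emph{The last proper skeleton.} Take $k=N-4$, so $a=2$. I would use $\binom{A-1}{2}=\binom A2-A+1$, which holds for $A\ge1$. For $A=0$ the left side is $0$ while the right side is $1$, and there are exactly $c-1$ empty separators (proof of Theorem~\ref{thm:3}). Summing with Proposition~\ref{prop:2} (at $t=1,2$) gives
\[
\sum_m\binom{n_m-1}2-\sum_m\binom{r_m-1}2=|E(G)|-N+e-\bigl((e-1)-(c-1)\bigr)=|E(G)|-N+c.
\]
Therefore $\tau_{N-4}=\binom{N-1}2+N-|E(G)|-c=|E(\overline G)|-c+1$.

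\emph{Gorenstein equivalences.} Since $A_k$ is Cohen--Macaulay, (i)$\Leftrightarrow$(ii). For (ii)$\Rightarrow$(v), the lower bound forces $k=N-4$, and then $|E(\overline G)|=c$. If $c\ge2$, the nonedges between components number at least $N-1\ge c$. Equality would force all components to be singletons, in which case $|E(\overline G)|=\binom N2>N$ for $N\ge4$. So $c=1$ and $\overline G$ has a single edge $uv$, which is (v).

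From (v), the maximal cliques are $X\setminus\{u\}$ and $X\setminus\{v\}$, which is (iii). Also $\Delta_{\mathbf r}^\vee$ has the single facet $X\setminus\{u,v\}$ (the complement of the unique nonedge), so its $(N-4)$-skeleton is the boundary of that simplex, which is (iv). Conversely, (iii) or (iv) gives a sphere boundary, which is Gorenstein; alternatively, the formula above gives $\tau=1-1+1=1$. The final sentence of the corollary is the contrapositive.

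The step I expect to be the main obstacle is justifying the comparison $\tau_k(\Delta_{\mathbf r})\ge\tau_k(\Gamma)$ cleanly: one must check that $\Gamma$ fits the hypotheses so that its $\tau_k$ is given by Corollary~\ref{cor:13}, and that the cycle-group injection is valid over $\K$.
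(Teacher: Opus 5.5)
Your proof is correct apart from one counting slip, flagged at the end. Its outline matches the paper's: the same comparison complex, (i)$\Leftrightarrow$(ii) by Cohen--Macaulayness, the lower bound forcing $k=N_{\mathbf r}-4$, and then $c=1$ and $|E(\overline G)|=1$.

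\textbf{Lower bound.} Your $\Gamma^\vee$ is exactly the simplex on $X\setminus\{u,v\}$, which is the facet of $\Delta_{\mathbf r}^\vee$ the paper uses. So the step you expected to be the main obstacle is not one. You can read off $\binom{N_{\mathbf r}-3}{k+1}$ as the top Betti number of a simplex skeleton. Applying Corollary~\ref{cor:13} to $\Gamma$ and using Pascal's rule twice also works; it is just longer.

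\textbf{Last proper skeleton.} This is where your route genuinely differs. You evaluate the closed formula for $\tau_{N_{\mathbf r}-4}$ numerically, using Proposition~\ref{prop:2} at $t=1,2$ and the fact that there are exactly $c-1$ empty separators. The paper instead works in the chain complex of $\Delta_{\mathbf r}^\vee$. From Corollary~\ref{cor:11} it has $\widetilde H_{N_{\mathbf r}-4}(\Delta_{\mathbf r}^\vee)=0$ and $\operatorname{rank}\widetilde H_{N_{\mathbf r}-3}(\Delta_{\mathbf r}^\vee)=c-1$. This gives $\operatorname{rank}\operatorname{im}\partial_{N_{\mathbf r}-3}=|E(\overline G)|-(c-1)$. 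Your version avoids Alexander duality and is elementary once the formula for $\tau_k$ is available. The paper's version explains the answer structurally: facets of the dual minus the top cycles of the dual.

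\textbf{The slip.} It is in excluding $c\ge2$. The inequality $N_{\mathbf r}-1\ge c$ fails when $c=N_{\mathbf r}$. Moreover, $|E(\overline G)|=c$ together with $|E(\overline G)|\ge N_{\mathbf r}-1$ only gives $c\in\{N_{\mathbf r}-1,N_{\mathbf r}\}$, not ``all components are singletons.'' So the case $c=N_{\mathbf r}-1$, a single edge plus isolated vertices, is missing. The fix is one line:
\begin{itemize}
\item[] If $2\le c\le N_{\mathbf r}-2$, then $|E(\overline G)|\ge N_{\mathbf r}-1>c$.
\item[] If $c\ge N_{\mathbf r}-1$, then $G$ has at most one edge, so $|E(\overline G)|\ge\binom{N_{\mathbf r}}2-1>N_{\mathbf r}\ge c$ for $N_{\mathbf r}\ge4$.
\end{itemize}
Alternatively, use the paper's bound: for $c\ge2$ and $N_{\mathbf r}\ge4$, the pairs of vertices in distinct components already give at least $c+1$ nonedges. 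With this repair, the argument is complete.
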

	
	\begin{proof}
		Because $e\ge2$, the chordal graph $G$ is not complete. Choose a nonedge $\{u,v\}$ of $G$. Its complement $X\setminus\{u,v\}$ is a facet of $\Delta_{\mathbf r}^\vee$ with $N_{\mathbf r}-2$ vertices. If $0\le k\le N_{\mathbf r}-5$, the $k$-skeleton of this facet is contained in $(\Delta_{\mathbf r}^\vee)_{(k)}$ and has top reduced homology rank
		\[
		\binom{N_{\mathbf r}-3}{k+1}.
		\]
		Since a $k$-dimensional complex has no boundaries coming from dimension $k+1$, these independent top cycles remain independent in the full $k$-skeleton. Hence
		\[
		\tau_k\ge\binom{N_{\mathbf r}-3}{k+1}\ge2.
		\]
		Thus a Gorenstein proper skeleton with $k\ge0$ can occur only when $k=N_{\mathbf r}-4$.
		
		Set $L=(\Delta_{\mathbf r}^\vee)_{(N_{\mathbf r}-4)}$. The facets of $\Delta_{\mathbf r}^\vee$ are the complements of the nonedges of $G$, so
		\[
		f_{N_{\mathbf r}-3}(\Delta_{\mathbf r}^\vee)=|E(\overline G)|.
		\]
		By Corollary~\ref{cor:11},
		\[
		\operatorname{rank}\widetilde H_{N_{\mathbf r}-3}(\Delta_{\mathbf r}^\vee;\mathbb Z)=c-1,
		\qquad
		\widetilde H_{N_{\mathbf r}-4}(\Delta_{\mathbf r}^\vee;\mathbb Z)=0.
		\]
		In the simplicial chain complex, it follows that
		\[
		\operatorname{rank}\widetilde H_{N_{\mathbf r}-4}(L;\mathbb Z)
		=\operatorname{rank}\operatorname{im}\partial_{N_{\mathbf r}-3}
		=|E(\overline G)|-(c-1).
		\]
		The left side is $\tau_{N_{\mathbf r}-4}$, which proves the formula for the last proper skeleton.
		
		Since $A_k$ is Cohen--Macaulay, it is Gorenstein exactly when its type is one, so conditions~(i) and~(ii) are equivalent. If $G$ is disconnected and $N_{\mathbf r}\ge4$, the pairs of vertices in distinct connected components alone give at least $c+1$ nonedges; hence the formula for the last proper skeleton gives $\tau_{N_{\mathbf r}-4}\ge2$. Therefore type one implies $c=1$, and then the formula for the last proper skeleton gives $|E(\overline G)|=1$. Thus $G=K_{N_{\mathbf r}}\setminus\{uv\}$ for a unique missing edge $uv$. Its maximal cliques are $X\setminus\{u\}$ and $X\setminus\{v\}$, so $e=2$, $n_1=n_2=N_{\mathbf r}-1$, and $r_1=N_{\mathbf r}-2$. Conversely, these numerical conditions give precisely a complete graph with one missing edge. Its Alexander dual is the simplex on $X\setminus\{u,v\}$ with $u$ and $v$ ghost, and the $(N_{\mathbf r}-4)$-skeleton is the boundary of that simplex. Its Stanley--Reisner ring is Gorenstein. This proves the equivalence of~(i)--(v) and the final assertion.
	\end{proof}
	
	\begin{corollary}\label{cor:15}
		Under the hypotheses and notation of Theorem~\ref{thm:5}, for $-1\le k<N_{\mathbf r}-3$,
		\[
		\reg\SR{(\Delta_{\mathbf r}^\vee)_{(k)}}=k+1,
		\qquad
		\reg I_{(\Delta_{\mathbf r}^\vee)_{(k)}}=k+2.
		\]
		If $r_{\min}\ge1$ and $N_{\mathbf r}-r_{\min}-3\le k<N_{\mathbf r}-3$, then
		\[
		\reg I_{\Delta_{\mathbf r}^\vee}
		\le
		\reg I_{(\Delta_{\mathbf r}^\vee)_{(k)}}=k+2,
		\]
		with equality at $k=N_{\mathbf r}-r_{\min}-3$. If $r_{\min}=0$, no proper skeleton lies in this comparison range; in fact,
		\[
		\reg I_{(\Delta_{\mathbf r}^\vee)_{(k)}}
		<
		\reg I_{\Delta_{\mathbf r}^\vee}
		\qquad(-1\le k<N_{\mathbf r}-3).
		\]
	\end{corollary}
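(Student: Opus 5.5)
The plan is to read everything off the two-row shape of the Betti table given by Theorem~\ref{thm:5}, together with the Cohen--Macaulay information in Corollary~\ref{cor:13}.

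\emph{Regularity of the skeleton.} For $k=-1$ the ring is $\K$, so both claims are immediate: $\reg\K=0$, and the maximal ideal has regularity $1$. For $k\ge0$, parts~(i) and~(ii) of Theorem~\ref{thm:5} show that every nonzero entry lies in a row $j\le k+1$, so $\reg A_k\le k+1$. For equality I will use Corollary~\ref{cor:13}. There $A_k$ is Cohen--Macaulay with $\pdim A_k=N_{\mathbf r}-k-1$, and its last free module is $R(-N_{\mathbf r})^{\tau_k}$ with $\tau_k\ge1$. The ring is level with nonzero top homology, since $\tau_k\ge1$ by Corollary~\ref{cor:14} or by the wedge-of-spheres description. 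This gives a nonzero entry in row $N_{\mathbf r}-(N_{\mathbf r}-k-1)=k+1$. For the ideal, $\beta_{i,j}(I)=\beta_{i+1,j}(A_k)$, so $\reg I_{(\Delta^\vee_{\mathbf r})_{(k)}}=\reg A_k+1=k+2$. This holds because $A_k\ne R$: for $k\ge0$ the skeleton is proper, so the ideal is nonzero.

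\emph{Comparison with the full dual.} By Corollary~\ref{cor:9}, $\reg\SR{\Delta_{\mathbf r}^\vee}=N_{\mathbf r}-r_{\min}-2$, so $\reg I_{\Delta_{\mathbf r}^\vee}=N_{\mathbf r}-r_{\min}-1$. The inequality $N_{\mathbf r}-r_{\min}-1\le k+2$ is then equivalent to $k\ge N_{\mathbf r}-r_{\min}-3$, with equality exactly at $k=N_{\mathbf r}-r_{\min}-3$. This value lies in the admissible range $k<N_{\mathbf r}-3$ precisely when $r_{\min}\ge1$.

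\emph{The case $r_{\min}=0$.} Here $\reg I_{\Delta^\vee_{\mathbf r}}=N_{\mathbf r}-1$, while every proper skeleton satisfies $k+2\le N_{\mathbf r}-2$, which gives the strict inequality. For $k=-1$ the value is $1<N_{\mathbf r}-1$, since $N_{\mathbf r}\ge3$ whenever the range of $k$ is nonempty.

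\emph{Main obstacle.} The only delicate step is the lower bound $\reg A_k\ge k+1$. Its nonvanishing witness is $\tau_k>0$, which must be justified. To avoid any circularity with Corollary~\ref{cor:13}'s remark on the $a$-invariant, I would argue directly. Since $k<N_{\mathbf r}-3=\dim\Delta^\vee_{\mathbf r}$, the dual has a face $F$ with $|F|=k+2$. Hochster's formula on $W=F$ then gives $\beta_{1,k+2}(A_k)\ge1$, because the restriction of the skeleton to $F$ is the boundary of a simplex. Note this is a first-syzygy entry, not the last one. The argument is exactly as in the proof of Theorem~\ref{thm:2}, and it settles the equality for all $0\le k<N_{\mathbf r}-3$.
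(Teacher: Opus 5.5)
Your proposal is correct and follows essentially the paper's route: the upper bound $\reg A_k\le k+1$ from Theorem~\ref{thm:5}(ii), a nonzero Betti number in row $k+1$ for the lower bound, and Corollary~\ref{cor:9} ($\reg I_{\Delta_{\mathbf r}^\vee}=N_{\mathbf r}-r_{\min}-1$) for the comparison and the $r_{\min}=0$ case. The paper's witness is the last Betti number $\beta_{N_{\mathbf r}-k-1,N_{\mathbf r}}(A_k)=\tau_k$ from Corollary~\ref{cor:13}, which is nonzero simply because $\pdim A_k=N_{\mathbf r}-k-1$, so neither Corollary~\ref{cor:14} nor the wedge decomposition is needed; your Hochster witness $\beta_{1,k+2}\ne0$, coming from a $(k+2)$-face of the dual, is equally valid and does not use Cohen--Macaulayness, and the circularity you worry about does not arise, since only the last-shift part of Corollary~\ref{cor:13} is used, not its $a$-invariant remark.
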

	
	\begin{proof}
		Theorem~\ref{thm:5} has no row of degree shift greater than $k+1$, and Corollary~\ref{cor:13} shows that its last Betti number in that row is nonzero. Hence the quotient regularity is $k+1$, and the regularity of its nonzero Stanley--Reisner ideal is $k+2$.
		
		Corollary~\ref{cor:9} gives $\reg I_{\Delta_{\mathbf r}^\vee}=N_{\mathbf r}-r_{\min}-1$. The stated comparison is therefore equivalent to $k\ge N_{\mathbf r}-r_{\min}-3$. This interval contains a proper skeleton exactly when $r_{\min}\ge1$, and equality holds at its lower endpoint. If $r_{\min}=0$, then every proper skeleton satisfies $k+2\le N_{\mathbf r}-2<N_{\mathbf r}-1=\reg I_{\Delta_{\mathbf r}^\vee}$.
	\end{proof}
	
	We next compare the dual skeleton with the skeleton of the full simplex.
	
	\begin{theorem}\label{thm:6}
		Let $\Delta_{\mathbf r}=\Delta_{\mathbf r}(n_1,\ldots,n_e)$ be the clique complex of a chordal graph on vertex set $X$, with at least two maximal cliques $S_1,\ldots,S_e$ in a fixed leaf order. Put $n_m=|S_m|$, let $r_1,\ldots,r_{e-1}$ be the corresponding separator cardinalities.
		Let $\boldsymbol\Delta_{N_{\mathbf r}}$ denote the simplex on $X$, and let $-1\le k<N_{\mathbf r}-3$. Then
		\[
		(\Delta_{\mathbf r}^\vee)_{(k)}
		=(\boldsymbol\Delta_{N_{\mathbf r}})_{(k)}
		\quad\Longleftrightarrow\quad
		k\le N_{\mathbf r}-n_{\max}-2.
		\]
		In this range,
		\[
		\beta_{i,i+k+1}\bigl(\SR{(\Delta_{\mathbf r}^\vee)_{(k)}}\bigr)
		=
		\binom{N_{\mathbf r}}{i+k+1}\binom{i+k}{k+1}
		\qquad(i\ge1).
		\]
		If $k\ge0$, then
		\[
		(\Delta_{\mathbf r}^\vee)_{(k)}
		\simeq
		\bigvee^{\binom{N_{\mathbf r}-1}{k+1}}S^k.
		\]
	\end{theorem}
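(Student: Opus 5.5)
The plan is to handle the combinatorial equivalence first. The complex $(\Delta_{\mathbf r}^\vee)_{(k)}$ equals $(\boldsymbol\Delta_{N_{\mathbf r}})_{(k)}$ exactly when every subset $A\subseteq X$ with $|A|\le k+1$ lies in $\Delta_{\mathbf r}^\vee$. Since $\Delta_{\mathbf r}^\vee$ is closed under inclusion, it suffices to ask this for $|A|=k+1$, and $A\in\Delta_{\mathbf r}^\vee$ means that $X\setminus A\notin\Delta_{\mathbf r}$. So the condition says that no face of $\Delta_{\mathbf r}$ has cardinality $N_{\mathbf r}-k-1$. Faces of every cardinality up to $n_{\max}$ exist, since they are subsets of a largest facet, and none are larger. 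Hence the condition holds if and only if $N_{\mathbf r}-k-1>n_{\max}$, that is, $k\le N_{\mathbf r}-n_{\max}-2$. One point needs care: the case $N_{\mathbf r}-k-1\le 0$ cannot occur, because $k<N_{\mathbf r}-3$.

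Next comes the Betti number formula. In this range the skeleton is literally the $k$-skeleton of the simplex on $X$. I would compute its row $j=i+k+1$ directly via Hochster's formula. For $|W|=i+k+1$, the induced subcomplex is the $k$-skeleton of a simplex on $i+k+1$ vertices, whose only reduced homology sits in degree $k$, of rank $\binom{i+k}{k+1}$. This rank is most easily checked by the Euler characteristic identity $\sum_{t=0}^{k+1}(-1)^t\binom{M}{t}=(-1)^{k+1}\binom{M-1}{k+1}$ with $M=i+k+1$. Summing over the $\binom{N_{\mathbf r}}{i+k+1}$ choices of $W$ gives the stated formula.

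As a consistency check, Theorem~\ref{thm:5}(iii) gives the same answer. In the range $k\le N_{\mathbf r}-n_{\max}-2$, every binomial $\binom{n_m}{N_{\mathbf r}-i-k-1}$ with $i\ge1$ still involves indices where $n_m-N_{\mathbf r}+i+k\le i-2<i-1$, so each facet term vanishes. The separator terms vanish the same way, since $r_m<n_{\max}$. The Kronecker term vanishes because $N_{\mathbf r}-r_m\ge N_{\mathbf r}-n_{\max}+1\ge k+3>k+2$. I expect this vanishing bookkeeping, rather than any conceptual step, to be the only place needing care.

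Finally, for the homotopy type with $k\ge0$, I would apply Corollary~\ref{cor:13}, which gives a wedge of $\tau_k$ spheres $S^k$. It remains to show $\tau_k=\binom{N_{\mathbf r}-1}{k+1}$, which holds because each $\binom{n_m-1}{N_{\mathbf r}-k-2}$ and $\binom{r_m-1}{N_{\mathbf r}-k-2}$ vanishes once $N_{\mathbf r}-k-2\ge n_{\max}>n_m-1$. Alternatively, the $k$-skeleton of an $(N_{\mathbf r}-1)$-simplex is shellable with top Betti number $\binom{N_{\mathbf r}-1}{k+1}$ by the same Euler characteristic identity, so the homotopy statement also follows directly.
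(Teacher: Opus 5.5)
Your proposal is correct and follows essentially the same route as the paper. The equivalence comes from the complement description of faces of $\Delta_{\mathbf r}^\vee$, which the paper phrases via the minimal nonfaces $X\setminus S_m$, the smallest of size $N_{\mathbf r}-n_{\max}$; the Betti row is the standard simplex-skeleton formula, which you verify via Hochster's formula, and the homotopy type follows from Corollary~\ref{cor:13} with $\tau_k=\binom{N_{\mathbf r}-1}{k+1}$ or from the shelling of a simplex skeleton, exactly as in the paper.
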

	
	\begin{proof}
		The minimal nonfaces of $\Delta_{\mathbf r}^\vee$ are the complements $X\setminus S_m$. The smallest of them has cardinality $N_{\mathbf r}-n_{\max}$. Hence every subset of $X$ of cardinality at most $k+1$ is a face of $\Delta_{\mathbf r}^\vee$ exactly when
		\[
		k+1<N_{\mathbf r}-n_{\max},
		\]
		which is the stated inequality.
		
		More explicitly, if $k\le N_{\mathbf r}-n_{\max}-2$, neither complex has a nonface of cardinality at most $k+1$, so the two $k$-skeletons agree. Conversely, if $k\ge N_{\mathbf r}-n_{\max}-1$, choose a facet $S_m$ of cardinality $n_{\max}$. Its complement has cardinality at most $k+1$ and is a minimal nonface of the dual. It is therefore a face of the simplex skeleton but not of the dual skeleton.
		
		The Betti formula is the standard formula for a simplex skeleton. The homotopy statement follows either from that formula and Corollary~\ref{cor:13}, or from the standard shelling of a simplex skeleton \cite[Section~11]{Bjorner95}.
	\end{proof}
	
	\begin{corollary}\label{cor:16}
		Under the hypotheses and notation of Theorem~\ref{thm:6}, for $-1\le k<N_{\mathbf r}-3$, the Stanley--Reisner ideal $I_{(\Delta_{\mathbf r}^\vee)_{(k)}}$ has a $(k+2)$-linear resolution if and only if
		\[
		k\le N_{\mathbf r}-n_{\max}-2.
		\]
		Equivalently, this happens exactly when $(\Delta_{\mathbf r}^\vee)_{(k)}$ is the $k$-skeleton of the full simplex on $X$.
	\end{corollary}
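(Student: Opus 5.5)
The plan is to show that the Stanley--Reisner ideal of a proper skeleton is generated in degree $k+2$ exactly in the stated range, and then to read the rest off Corollary~\ref{cor:15}. By Corollary~\ref{cor:15}, $\reg I_{(\Delta_{\mathbf r}^\vee)_{(k)}}=k+2$ for every $-1\le k<N_{\mathbf r}-3$. A homogeneous ideal has a $(k+2)$-linear resolution if and only if it is generated in the single degree $k+2$ and its regularity is $k+2$. So the criterion reduces to deciding when all minimal generators of $I_{(\Delta_{\mathbf r}^\vee)_{(k)}}$ have degree $k+2$.

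Next describe the minimal nonfaces of a skeleton $\Sigma_{(k)}$. They are of two kinds:
\begin{itemize}
\item the $(k+2)$-element faces of $\Sigma$ (equivalently, all $(k+2)$-subsets all of whose proper subsets are faces);
\item the minimal nonfaces of $\Sigma$ of cardinality at most $k+1$.
\end{itemize}
Hence every minimal generator has degree $\le k+2$. Generators of degree $k+2$ exist, because $k<N_{\mathbf r}-3=\dim\Delta_{\mathbf r}^\vee$ gives a face of cardinality $k+2$ (or the ideal is trivially generated in degree $k+2$). So the ideal is generated in the single degree $k+2$ exactly when $\Sigma=\Delta_{\mathbf r}^\vee$ has no minimal nonface of cardinality $\le k+1$. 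For $k=-1$ the skeleton is $\{\varnothing\}$, the ideal is the maximal ideal, and we only need $N_{\mathbf r}-n_{\max}\ge1$, which holds since $e\ge2$.

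The minimal nonfaces of $\Delta_{\mathbf r}^\vee$ are the complements $X\setminus S_m$, as in the proof of Theorem~\ref{thm:6}. The smallest of these has cardinality $N_{\mathbf r}-n_{\max}$. So the condition is $k+1<N_{\mathbf r}-n_{\max}$, that is, $k\le N_{\mathbf r}-n_{\max}-2$. Theorem~\ref{thm:6} then gives the equivalence with $(\Delta_{\mathbf r}^\vee)_{(k)}=(\boldsymbol\Delta_{N_{\mathbf r}})_{(k)}$.

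As a cross-check, one can also argue via Theorem~\ref{thm:5}(i). When $k\ge N_{\mathbf r}-n_{\max}-1$, the entry $\beta_{1,N_{\mathbf r}-n_{\max}}(\SR{\Delta_{\mathbf r}^\vee})\ne0$ sits in row $j=N_{\mathbf r}-n_{\max}-1<k+1$. That row is inherited by the skeleton, so the resolution has two rows and is not linear. The only delicate point is the boundary case $j=k+1$, where the inherited generator degree coincides with $k+2$; this case is exactly excluded by the strict inequality above. I expect no substantial obstacle beyond this bookkeeping.
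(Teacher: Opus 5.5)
Your proof is correct. The necessity half is the paper's argument: for $k\ge N_{\mathbf r}-n_{\max}-1$, the generator $x_{X\setminus S_m}$ with $n_m=n_{\max}$ has degree at most $k+1$. A $(k+2)$-face of $\Delta_{\mathbf r}^\vee$, which exists because $k<N_{\mathbf r}-3$, gives a generator of degree $k+2$, so the ideal is not generated in one degree.

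The sufficiency half takes a different route. The paper notes that in this range $I_{(\Delta_{\mathbf r}^\vee)_{(k)}}$ is the squarefree Veronese ideal of degree $k+2$, and quotes the known fact that this ideal has a linear resolution. You instead use that an ideal generated in degree $k+2$ with regularity $k+2$ has a $(k+2)$-linear resolution, and you take the regularity from Corollary~\ref{cor:15}. This keeps the argument inside the paper and handles both directions with one criterion, namely a single generating degree.

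You only need the upper bound $\reg I_{(\Delta_{\mathbf r}^\vee)_{(k)}}\le k+2$. The lower bound is automatic once the ideal is nonzero and generated in degree $k+2$. For $k\ge0$, that upper bound is just Proposition~\ref{prop:1}(ii): a $k$-dimensional complex has no reduced homology above degree $k$. So you do not need Theorem~\ref{thm:5}, Corollary~\ref{cor:13}, or the nonvanishing of $\tau_k$ that Corollary~\ref{cor:15} depends on. With that substitution, your sufficiency step is in effect a direct proof that squarefree Veronese ideals have linear resolutions, and it uses no chordal structure.

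One minor inaccuracy: the $(k+2)$-faces of $\Sigma$ are not the same as the $(k+2)$-subsets all of whose proper subsets are faces. The latter also include minimal nonfaces of $\Sigma$ of cardinality $k+2$, and that second description is the correct one. The slip does not affect your argument, which only uses that every minimal generator has degree at most $k+2$ and that some have degree exactly $k+2$.
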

	
	\begin{proof}
		In the range of Theorem~\ref{thm:6}, the ideal is the squarefree Veronese ideal generated by all squarefree monomials of degree $k+2$, so it has a $(k+2)$-linear resolution.
		
		Suppose instead that $k\ge N_{\mathbf r}-n_{\max}-1$. Then $I_{(\Delta_{\mathbf r}^\vee)_{(k)}}$ has a minimal generator $x_{X\setminus S_m}$ of degree $N_{\mathbf r}-n_{\max}\le k+1$. Because $k<N_{\mathbf r}-3=\dim\Delta_{\mathbf r}^\vee$, the dual contains a face of cardinality $k+2$; its monomial becomes a minimal nonface of the $k$-skeleton and gives a minimal generator of degree $k+2$. Thus the ideal is not generated in one degree and cannot have a linear resolution.
	\end{proof}
	
	The next result determines precisely which numerical data are determined by the graded Betti table of the dual when the number of vertices is allowed to vary.
	
	\begin{theorem}\label{thm:7}
		Let $\Delta$ and $\Gamma$ be chordal clique complexes on $N$ and $N'$ vertices, respectively, and assume that each has at least two facets. Choose leaf orders, with facet and separator sizes
		\[
		\{n_1,\ldots,n_e\},\quad \{r_1,\ldots,r_{e-1}\}
		\]
		for $\Delta$, and
		\[
		\{n'_1,\ldots,n'_{e'}\},\quad \{r'_1,\ldots,r'_{e'-1}\}
		\]
		for $\Gamma$. Put $\delta=N'-N$. Then the following conditions are equivalent.
		\begin{enumerate}
			\item[(i)] The rings $\SR{\Delta^\vee}$ and $\SR{\Gamma^\vee}$ have the same graded Betti table.
			\item[(ii)] The multisets of facet and separator cardinalities differ by the common shift $\delta$:
			\[
			\{n'_1,\ldots,n'_{e'}\}=\{n_1+\delta,\ldots,n_e+\delta\},
			\]
			\[
			\{r'_1,\ldots,r'_{e'-1}\}=\{r_1+\delta,\ldots,r_{e-1}+\delta\}.
			\]
		\end{enumerate}
		In particular, if $N=N'$, the graded Betti table determines both multisets exactly. In that case, $\SR{\Delta_{(k)}}$ and $\SR{\Gamma_{(k)}}$ have the same graded Betti table for every $k$, and the same is true of $\SR{(\Delta^\vee)_{(k)}}$ and $\SR{(\Gamma^\vee)_{(k)}}$ whenever the corresponding skeletons are defined.
	\end{theorem}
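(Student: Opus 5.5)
The plan is to read everything off the explicit graded Betti numbers in Corollary~\ref{cor:9}. By that corollary, the graded Betti table of $\SR{\Delta^\vee}$ consists of $\beta_{0,0}=1$ together with
\[
\beta_{1,j}(\SR{\Delta^\vee})=\bigl|\{m:N-n_m=j\}\bigr|,
\qquad
\beta_{2,j}(\SR{\Delta^\vee})=\bigl|\{q:N-r_q=j\}\bigr|,
\]
and all other entries vanish. The analogous description holds for $\Gamma$ with $N',n'_m,r'_q$. The generator and syzygy contributions sit in different homological degrees and come from a minimal resolution (Theorem~\ref{thm:4}). So, unlike the Hilbert numerator of Proposition~\ref{prop:3}, no cancellation between them can occur. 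Hence the graded Betti table is equivalent to two pieces of data: the multiset $\{N-n_1,\ldots,N-n_e\}$, read from homological degree one, and the multiset $\{N-r_1,\ldots,N-r_{e-1}\}$, read from homological degree two.

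For (i)$\Rightarrow$(ii), equality of the tables gives two equalities of multisets:
\[
\{N-n_m\}=\{N'-n'_m\}
\qquad\text{and}\qquad
\{N-r_q\}=\{N'-r'_q\}.
\]
Comparing total Betti numbers in degree one forces $e=e'$. Adding $\delta=N'-N$ to every element then gives $\{n'_m\}=\{n_m+\delta\}$ and $\{r'_q\}=\{r_q+\delta\}$.

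For (ii)$\Rightarrow$(i), the shift $\delta$ cancels in every resolution degree. Indeed, $N'-(n_m+\delta)=N-n_m$ and $N'-(r_q+\delta)=N-r_q$. Corollary~\ref{cor:9} then yields identical tables. I would remark that the leaf-order independence from Corollary~\ref{cor:8} ensures the multisets are well defined, though the argument itself does not need it.

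For the case $N=N'$, we have $\delta=0$, so both multisets agree exactly. It remains to check that every skeleton Betti table depends only on $N$ and the two multisets.

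For $\Delta_{(k)}$, the table is given by Theorem~\ref{thm:1}. Parts (a) and (b) involve only $N$. Parts (c) and (d) involve $N$ and symmetric sums over the $n_m$ and the $r_q$. The case split itself depends only on $d=n_{\max}-1$, which is determined by the facet multiset. For $k>d$ the skeleton is the full complex, covered by part~(d).

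For $(\Delta^\vee)_{(k)}$ with $-1\le k<N-3$, Theorem~\ref{thm:5} expresses the table through Corollary~\ref{cor:9}, which we have just shown agrees, and through the row formula (iii). That formula is again a symmetric function of $N$ and the two multisets. For $k\ge N-3$ the skeleton is the full dual, already handled.

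The main point requiring care, rather than a real obstacle, is the claim that the table separates the two multisets, i.e.\ that a facet shift $N-n_m$ equal to a separator shift $N-r_q$ does not cancel. This is exactly where the minimal resolution of Theorem~\ref{thm:4} is used instead of Proposition~\ref{prop:3}. Everything else is bookkeeping with the explicit formulas.
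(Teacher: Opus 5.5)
Your proposal is correct and follows the paper's own proof. Both directions are read off from the multisets $\{N-n_m\}$ and $\{N-r_q\}$ in Corollary~\ref{cor:9}, with $e=e'$ coming from the total Betti number in homological degree one, and the case $N=N'$ is finished by noting that the formulas of Theorems~\ref{thm:1} and~\ref{thm:5} depend only on $N$ and the two multisets. Your extra remarks make explicit what the paper leaves implicit: the case split in Theorem~\ref{thm:1} depends only on $d=n_{\max}-1$, and for $k\ge N-3$ the dual skeleton is the full dual.
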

	
	\begin{proof}
		Assume first that the dual rings have the same graded Betti table. The total Betti number in homological degree one gives $e=e'$. By Corollary~\ref{cor:9}, their Betti numbers in homological degree one identify the same multiset of generator degrees:
		\[
		\{N-n_m:1\le m\le e\}
		=
		\{N'-n'_m:1\le m\le e\}.
		\]
		Since $N'=N+\delta$, subtracting these degrees from the respective vertex counts gives
		\[
		\{n'_m:1\le m\le e\}=\{n_m+\delta:1\le m\le e\}.
		\]
		The Betti numbers in homological degree two give, in the same way,
		\[
		\{N-r_q:1\le q\le e-1\}
		=
		\{N'-r'_q:1\le q\le e-1\},
		\]
		and hence $\{r'_q\}=\{r_q+\delta\}$. This proves~(ii).
		
		Conversely, the common shift equalities imply
		\[
		\{N'-n'_m\}=\{N-n_m\},
		\qquad
		\{N'-r'_q\}=\{N-r_q\}.
		\]
		Corollary~\ref{cor:9} therefore gives identical graded Betti tables for the two dual rings. If $N=N'$, then $\delta=0$ and the numerical data agree exactly. The final assertions then follow from Theorems~\ref{thm:1} and~\ref{thm:5}.
	\end{proof}
	
	\begin{remark}\label{rem:5}
		Theorem~\ref{thm:7} determines only the facet and separator cardinalities, not the clique tree. Example~\ref{ex:1} gives nonisomorphic complexes with the same facet and separator sizes. Corollary~\ref{cor:7} gives finer information: the multigraded Betti numbers determine the actual facet and separator subsets on a fixed labeled vertex set. A particular differential written as in Theorem~\ref{thm:4} also determines the chosen branches, but a minimal resolution is unique only up to graded changes of basis, so the chosen relation tree is not itself a canonical invariant of the abstract graded module.
		
		The common shift in Theorem~\ref{thm:7} is exactly the numerical effect of coning. Adding one new vertex to every facet increases $N$, every $n_m$, and every $r_q$ by one, while preserving the differences $N-n_m$ and $N-r_q$. The theorem shows that, when the number of vertices is not specified, the common shift is the only part of the facet and separator cardinality data not determined by the graded Betti table. It does not assert that two complexes with shifted data are isomorphic after coning, because the graded table does not determine the clique tree.
	\end{remark}
	
	\begin{corollary}\label{cor:17}
		Let $\Delta_{\mathbf r}=\Delta_{\mathbf r}(n_1,\ldots,n_e)$ be a chordal clique complex. All graded Betti numbers computed in Theorems~\ref{thm:1} and~\ref{thm:5} and Corollary~\ref{cor:9} are independent of the characteristic of $\K$. The projective dimensions, regularities, Cohen--Macaulay types, and reduced homology dimensions obtained from these formulas are therefore also characteristic independent.
	\end{corollary}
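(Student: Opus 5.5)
The plan is to observe that every formula in question is an explicit integer expression in the numerical data $N_{\mathbf r}$, $n_m$, $r_m$ and $k$, and that these data are defined purely combinatorially from the leaf order, with no reference to $\K$. So the argument reduces to checking that each cited result produces its Betti numbers either from such combinatorial data or from a characteristic-free resolution, and never from a homology computation over $\K$ that could depend on torsion.

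For Theorem~\ref{thm:1} I would go through the cases one at a time.
\begin{itemize}
\item Part~(a) is the Koszul resolution, which is defined over $\mathbb Z$.
\item Part~(b) uses Hochster's formula, but the induced subcomplexes are discrete point sets, so their homology is free.
\item For part~(c), the essential inputs are Proposition~\ref{prop:1} and the vanishing of every row other than $j=i+1$ for $\SR{\Delta_{\mathbf r}}$.
\end{itemize}
The one point needing care is that this vanishing was quoted from Fr\"oberg's theorem. I would note that Fr\"oberg's criterion holds over every field: chordality of the $1$-skeleton forces every induced subcomplex $(\Delta_{\mathbf r})_W$ to be a disjoint union of contractible components. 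Indeed, each induced subgraph of a chordal graph is chordal, and the argument of Theorem~\ref{thm:3} then applies. Hence the homology of $(\Delta_{\mathbf r})_W$ is concentrated in degree $0$ and free over $\mathbb Z$, and Hochster's formula gives the same Betti numbers over every $\K$.

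Proposition~\ref{prop:1}(i)--(ii) are purely homological truncation statements valid over any field. Part~(iii) of that proposition expresses the remaining row through the $f$-vector and the already characteristic-free rows. Consequently the final formulas in Theorem~\ref{thm:1}(c),(d) involve only $f$-numbers and binomials.

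For Corollary~\ref{cor:9}, I would note that the resolution in Theorem~\ref{thm:4} is built by mapping cones from colon ideals of monomial ideals. Its matrix entries are monomials with coefficients $\pm1$, so the construction is defined over $\mathbb Z$ and remains minimal over every field. Theorem~\ref{thm:5} then follows from Proposition~\ref{prop:1} applied to $\Delta_{\mathbf r}^\vee$, using $f(\Delta_{\mathbf r}^\vee)$ from Proposition~\ref{prop:3} and the rows of Corollary~\ref{cor:9}. Every ingredient there is independent of $\K$.

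The invariants are read off from the Betti tables: $\pdim$ and $\reg$ are positions of nonzero entries, and the type is the last total Betti number. Reduced homology dimensions come from Hochster's formula at $W=X$, or directly from the integral torsion-free statements of Theorem~\ref{thm:3}(i) and Corollary~\ref{cor:13} via universal coefficients.

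The main obstacle is the first step: making sure the linear-resolution input for $I_{\Delta_{\mathbf r}}$ is genuinely characteristic-free rather than just cited. I would handle this with the contractibility of induced subcomplexes, as described above.
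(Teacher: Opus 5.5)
Your proposal is correct and follows the paper's argument. The paper's proof consists only of your opening observation: every formula in Theorems~\ref{thm:1} and~\ref{thm:5} and Corollary~\ref{cor:9} is an integer expression in $N_{\mathbf r}$, the $n_m$, the $r_m$ and $k$, so no term depends on $\K$. The paper leaves implicit that each ingredient was proved over an arbitrary field. Your audit makes that step explicit, notably by replacing the citation of Fr\"oberg's theorem with contractibility of induced subcomplexes plus Hochster's formula, and by noting that the resolution in Theorem~\ref{thm:4} has $\pm1$ coefficients. The only small addition needed is that reading the type off the last total Betti number uses Cohen--Macaulayness, which is itself characteristic independent here because $\pdim$ is and the Krull dimension is combinatorial.
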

	
	\begin{proof}
		Each formula is an integer expression involving only $N_{\mathbf r}$ and the facet and separator cardinalities. No term depends on the ground field.
	\end{proof}
	
	\section{Examples}\label{sec:7}
	
	The first example shows that the numerical data determine the graded Betti tables in this paper but not the clique tree.
	
	\begin{example}\label{ex:1}
		Let $\Delta_P$ have facets
		\[
		\{a,b,c\},\quad \{c,d,e\},\quad \{e,f,g\},\quad \{g,h,i\},
		\]
		and let $\Delta_S$ have facets
		\[
		\{a,b,c\},\quad \{a,d,e\},\quad \{b,f,g\},\quad \{c,h,i\}.
		\]
		Both complexes have $N=9$, facet size multiset $\{3,3,3,3\}$, and separator size multiset $\{1,1,1\}$. The facet intersection graph of $\Delta_P$ is the path $P_4$, whereas that of $\Delta_S$ is the star $K_{1,3}$; in particular, the complexes are not isomorphic. Nevertheless, Theorem~\ref{thm:7} shows that their Alexander duals and all corresponding skeletons considered in this paper have the same graded Betti tables. Their multigraded Betti numbers differ, because Corollary~\ref{cor:7} determines the actual facet and separator subsets.
	\end{example}
	
	We next apply the formulas to a nonuniform complex.
	
	\begin{example}\label{ex:2}
		Let $\Delta_{\mathbf r}$ have facets
		\[
		S_1=\{x_1,x_2,x_3\},\qquad
		S_2=\{x_2,x_3,x_4,x_5,x_6\},\qquad
		S_3=\{x_4,x_5,x_6,x_7,x_8,x_9\}.
		\]
		Then
		\[
		(n_1,n_2,n_3)=(3,5,6),
		\qquad
		(r_1,r_2)=(2,3),
		\qquad
		N_{\mathbf r}=9,
		\]
		with branches $p(2)=1$ and $p(3)=2$.
		
		By Proposition~\ref{prop:2},
		\[
		f(\Delta_{\mathbf r})=(1,9,24,30,20,7,1).
		\]
		The linear row inherited by every proper skeleton of positive dimension is, by Theorem~\ref{thm:1}(c),
		\[
		(\beta_{1,2},\beta_{2,3},\beta_{3,4},\beta_{4,5},\beta_{5,6},\beta_{6,7})
		=(12,30,34,21,7,1).
		\]
		For the $2$-skeleton, the additional row $j-i=3$ is
		\[
		(\beta_{1,4},\beta_{2,5},\beta_{3,6},\beta_{4,7},\beta_{5,8},\beta_{6,9})
		=(20,93,173,161,75,14).
		\]
		Thus the Betti table of $\SR{(\Delta_{\mathbf r})_{(2)}}$ is
		\[
		\begin{array}{r|rrrrrrr}
			&0&1&2&3&4&5&6\\ \hline
			\mathrm{total}&1&32&123&207&182&82&15\\
			0&1&.&.&.&.&.&.\\
			1&.&12&30&34&21&7&1\\
			2&.&.&.&.&.&.&.\\
			3&.&20&93&173&161&75&14
		\end{array}
		\]
		The $0$-skeleton is instead given by Theorem~\ref{thm:1}(b): $\beta_{i,i+1}=i\binom9{i+1}$. For $k=2=r_{\min}$, Theorem~\ref{thm:3} gives
		\[
		\lambda_2=
		\binom23+\binom43+\binom53
		-\binom13-\binom23=14,
		\qquad
		\mu=1.
		\]
		Thus $\widetilde H_2((\Delta_{\mathbf r})_{(2)};\mathbb Z)\cong\mathbb Z^{14}$, the last nonzero Betti numbers in the two rows are $\beta_{6,9}=14$ and $\beta_{6,7}=1$, and the Cohen--Macaulay type is $15$. Since the last free module has two shifts, this skeleton is not level.
		
		For the Alexander dual, the minimal generators are
		\[
		g_1=x_4x_5x_6x_7x_8x_9,
		\qquad
		g_2=x_1x_7x_8x_9,
		\qquad
		g_3=x_1x_2x_3.
		\]
		Theorem~\ref{thm:4} gives the minimal resolution
		\[
		0\longrightarrow R(-7)\oplus R(-6)
		\xrightarrow{\partial_2}
		R(-6)\oplus R(-4)\oplus R(-3)
		\xrightarrow{\partial_1}
		I_{\Delta_{\mathbf r}^\vee}
		\longrightarrow0,
		\]
		where
		\[
		\partial_1=\begin{bmatrix}g_1&g_2&g_3\end{bmatrix},
		\qquad
		\partial_2=
		\begin{bmatrix}
			x_1&0\\
			-x_4x_5x_6&x_2x_3\\
			0&-x_7x_8x_9
		\end{bmatrix}.
		\]
		Internal degree $6$ occurs among both the minimal generators and the first syzygies. Their contributions cancel in the Hilbert numerator, which explains why that numerator cannot determine the two Betti rows separately. The direct resolution separates these two contributions.
		
		Here $r_{\min}=2$, so Corollary~\ref{cor:9} gives
		\[
		\reg\SR{\Delta_{\mathbf r}^\vee}=9-2-2=5.
		\]
		Both separators are nonempty, so $\Delta_{\mathbf r}$ is connected. Corollary~\ref{cor:11} therefore shows that $\Delta_{\mathbf r}^\vee$ is contractible.
		
		The bound in Theorem~\ref{thm:6} is $N_{\mathbf r}-n_{\max}-2=1$. Hence $(\Delta_{\mathbf r}^\vee)_{(1)}$ is the $1$-skeleton of the $8$-simplex and
		\[
		\beta_{i,i+2}=\binom9{i+2}\binom{i+1}2.
		\]
		At $k=2$, the skeleton is no longer a simplex skeleton. Corollary~\ref{cor:13} gives $\tau_2=55$, and therefore
		\[
		(\Delta_{\mathbf r}^\vee)_{(2)}\simeq\bigvee^{55}S^2.
		\]
		Its Stanley--Reisner ring is Cohen--Macaulay and level of type $55$.
	\end{example}
	
	\section{A binomial convolution from the Hilbert series}\label{sec:8}
	
	The Hilbert series of $\SR{\Delta_{\mathbf r}}$ has two useful expressions. Proposition~\ref{prop:2} writes it in terms of the facet and separator cardinalities, while the standard $f$-vector expression writes it in terms of the face numbers. Comparing the numerators of these expressions gives the following consequence. The underlying convolution is classical; the point here is that the facet and separator data organize it in a form adapted to chordal clique complexes. When $r_1=\cdots=r_{e-1}=1$, the formula specializes to the single-vertex separator case of thick trees considered in~\cite{Fr2}.
	
	\begin{corollary}\label{cor:18}
		Let $\Delta_{\mathbf r}=\Delta_{\mathbf r}(n_1,\ldots,n_e)$ be the chordal clique complex, with separator cardinalities $r_1,\ldots,r_{e-1}$ and $N_{\mathbf r}$ vertices. For every integer $j\geq0$,
		\begin{align*}
			&\sum_{m=1}^{e}\binom{N_{\mathbf r}-n_m}{j}
			-\sum_{m=1}^{e-1}\binom{N_{\mathbf r}-r_m}{j} \\
			&\qquad=
			\sum_{q=0}^{j}(-1)^q f_{q-1}(\Delta_{\mathbf r})
			\binom{N_{\mathbf r}-q}{j-q} \\
			&\qquad=
			\sum_{q=0}^{j}(-1)^q
			\left(
			\sum_{m=1}^{e}\binom{n_m}{q}
			-\sum_{m=1}^{e-1}\binom{r_m}{q}
			\right)
			\binom{N_{\mathbf r}-q}{j-q}.
		\end{align*}
		In particular, if $j=i+1$ with $i\geq1$, then each expression is equal to
		\[
		-\beta_{i,i+1}\bigl(\SR{\Delta_{\mathbf r}}\bigr).
		\]
	\end{corollary}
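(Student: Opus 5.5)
The plan is to compare the two expressions for the Hilbert numerator of $\SR{\Delta_{\mathbf r}}$ over the common denominator $(1-z)^{N_{\mathbf r}}$, and then read off the coefficient of $z^j$. First, multiply the formula of Proposition~\ref{prop:2} by $(1-z)^{N_{\mathbf r}}$. This gives the numerator
\[
P(z)=\sum_{m=1}^e(1-z)^{N_{\mathbf r}-n_m}-\sum_{m=1}^{e-1}(1-z)^{N_{\mathbf r}-r_m}.
\]
Each exponent is nonnegative, since $n_m,r_m\le N_{\mathbf r}$. The coefficient of $z^j$ in $P(z)$ is therefore
\[
(-1)^j\Bigl[\sum_m\binom{N_{\mathbf r}-n_m}{j}-\sum_m\binom{N_{\mathbf r}-r_m}{j}\Bigr].
\]

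Second, use the $f$-vector form $\sum_q f_{q-1}z^q(1-z)^{N_{\mathbf r}-q}$. Its coefficient of $z^j$ is
\[
\sum_{q=0}^{j}f_{q-1}(-1)^{j-q}\binom{N_{\mathbf r}-q}{j-q}.
\]
Equating the two coefficients and multiplying by $(-1)^j$ yields the first equality, because $(-1)^{-q}=(-1)^q$. Substituting Proposition~\ref{prop:2} for $f_{q-1}$ then gives the second equality immediately.

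A cross-check, or an alternative route avoiding Hilbert series, is to apply Lemma~\ref{lem:1} termwise with $n=N_{\mathbf r}$, $s=j$, and $A=n_m$ or $A=r_m$. This turns the last expression directly into the first. The hypothesis $A\le n$ holds here, and the summation range $t\le s$ versus $t\le A$ agrees under the binomial convention.

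For the final assertion, set $j=i+1$. The coefficient of $z^{i+1}$ in $P(z)$ also equals $\sum_{i'}(-1)^{i'}\beta_{i',i+1}(\SR{\Delta_{\mathbf r}})$. Since $I_{\Delta_{\mathbf r}}$ has a $2$-linear resolution, the only term with $i\ge1$ is $i'=i$. The term $i'=0$ contributes only in degree $0$, and $i+1\ge2$, so it does not appear. Hence the coefficient is $(-1)^i\beta_{i,i+1}$. Multiplying by $(-1)^{j}=(-1)^{i+1}$ gives $-\beta_{i,i+1}$, which agrees with the linear-row formula of Theorem~\ref{thm:1}(c).

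The only delicate point is sign bookkeeping, together with checking that the binomial convention handles the degenerate terms. Examples are $r_m=0$, where $(1-z)^{N_{\mathbf r}}$ is still fine, and $j>N_{\mathbf r}-n_m$, where the binomial is zero consistently.
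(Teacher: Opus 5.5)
Your proposal is correct and follows the paper's proof essentially verbatim. Both arguments write the Hilbert numerator of $\SR{\Delta_{\mathbf r}}$ over $(1-z)^{N_{\mathbf r}}$ twice, once via Proposition~\ref{prop:2} and once via the $f$-vector, compare the coefficients of $z^j$, and substitute the face-number formula to get the second equality. The only difference is minor: you derive the case $j=i+1$ directly from the $2$-linear resolution of $I_{\Delta_{\mathbf r}}$, whereas the paper quotes the linear-row formula of Theorem~\ref{thm:1}, and that formula rests on the same computation (your Lemma~\ref{lem:1} cross-check is exactly how Theorem~\ref{thm:1}(c) was obtained).
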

	
	\begin{proof}
		By Proposition~\ref{prop:2}, writing the Hilbert series over the denominator $(1-z)^{N_{\mathbf r}}$ gives the numerator
		\[
		P(z)=
		\sum_{m=1}^{e}(1-z)^{N_{\mathbf r}-n_m}
		-\sum_{m=1}^{e-1}(1-z)^{N_{\mathbf r}-r_m}.
		\]
		On the other hand, the standard $f$-vector expression for the same Hilbert series gives
		\[
		P(z)=
		\sum_{q\geq0}
		f_{q-1}(\Delta_{\mathbf r})z^q
		(1-z)^{N_{\mathbf r}-q}.
		\]
		The coefficient of $z^j$ in the first expression is
		\[
		(-1)^j
		\left(
		\sum_{m=1}^{e}\binom{N_{\mathbf r}-n_m}{j}
		-\sum_{m=1}^{e-1}\binom{N_{\mathbf r}-r_m}{j}
		\right),
		\]
		whereas its coefficient in the second expression is
		\[
		\sum_{q=0}^{j}
		(-1)^{j-q}f_{q-1}(\Delta_{\mathbf r})
		\binom{N_{\mathbf r}-q}{j-q}.
		\]
		Equating these coefficients and multiplying by $(-1)^j$ gives the first equality. The second follows from the face number formula in Proposition~\ref{prop:2}. Finally, the assertion for $j=i+1$ follows from the formula for the linear row in Theorem~\ref{thm:1}.
	\end{proof}
	
	The preceding identity at the actual vertex count has a useful polynomial extension. We first state the classical convolution that underlies it.
	
	\begin{lemma}\label{lem:3}
		For $j\ge0$, interpret
		\[
		\binom{x}{j}=\frac{x(x-1)\cdots(x-j+1)}{j!},
		\qquad \binom{x}{0}=1,
		\]
		as a polynomial in $x$. If $A,j$ are nonnegative integers, then
		\[
		\binom{x-A}{j}
		=
		\sum_{q=0}^{j}(-1)^q\binom{A}{q}\binom{x-q}{j-q}.
		\]
	\end{lemma}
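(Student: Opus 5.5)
Both sides of the identity in Lemma~\ref{lem:3} are polynomials in $x$ of degree at most $j$, so the plan is to prove it by generating functions. The alternative is to check it for infinitely many integer values of $x$ and extend by polynomiality. I would take the second route, since it reuses Lemma~\ref{lem:1} directly.

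Fix nonnegative integers $A$ and $j$. For every integer $n\ge A$, Lemma~\ref{lem:1} with $s=j$ gives
\[
\sum_{t=0}^{j}(-1)^t\binom At\binom{n-t}{j-t}=\binom{n-A}{j}.
\]
This is exactly the claimed identity evaluated at $x=n$. There is one point to check: the binomial convention of the paper, which sets $\binom ab=0$ for $a<0$ or $b>a$, must agree with the polynomial interpretation at these integer points.

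For $n\ge A$ and $0\le t\le j$, the upper entry $n-t$ may be negative once $t>n$. In that case I need to observe that the term is zero under both interpretations. If $t>n\ge A$, then $\binom At=0$, so the whole term vanishes regardless of how $\binom{n-t}{j-t}$ is read.

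For $t\le n$, the upper entry $n-t\ge0$ is a nonnegative integer. For a nonnegative integer $a$, the polynomial $\binom{a}{b}$ equals the combinatorial value, including $0$ when $b>a$, because the factor $(a-a)$ appears in the falling factorial. The same reasoning applies to $\binom{n-A}{j}$, since $n-A\ge0$.

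Hence the two polynomials
\[
\binom{x-A}{j}\quad\text{and}\quad\sum_{q=0}^{j}(-1)^q\binom Aq\binom{x-q}{j-q}
\]
agree at every integer $n\ge A$. These are infinitely many points, so the two polynomials coincide.

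The only delicate step is this reconciliation of conventions at integer points; the rest is immediate. As a backup, I could give a direct proof via $[y^j](1+y)^{x-A}=[y^j](1+y)^x(1+y)^{-A}$ in formal power series with polynomial coefficients. That would repeat the computation of Lemma~\ref{lem:1} with $n$ replaced by an indeterminate $x$, using $(1+y)^x=\sum_j\binom xj y^j$.
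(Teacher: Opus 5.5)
Your proof is correct and is essentially the paper's own argument: specialize Lemma~\ref{lem:1} to integers $x=n\ge A$ with $s=j$, then extend to all $x$ because two polynomials that agree at infinitely many points are equal. Your additional check fills in a detail the paper leaves implicit. Terms with $q>n$ vanish because $\binom Aq=0$, so the paper's zero convention for negative upper entries never conflicts with the polynomial reading.
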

	
	\begin{proof}
		For every integer $x\ge A$, the identity is Lemma~\ref{lem:1} with $n=x$ and $s=j$. Both sides are polynomials in $x$ of degree at most $j$. Since they agree for infinitely many integers $x$, they agree identically.
	\end{proof}
	
	\begin{theorem}\label{thm:8}
		Let $\Delta_{\mathbf r}=\Delta_{\mathbf r}(n_1,\ldots,n_e)$ be the clique complex of a chordal graph with at least two maximal cliques in a fixed leaf order, with facet cardinalities $n_1,\ldots,n_e$ and separator cardinalities $r_1,\ldots,r_{e-1}$.
		For every integer $j\geq0$, one has the polynomial identity
		\begin{align*}
			&\sum_{m=1}^{e}\binom{x-n_m}{j}
			-\sum_{m=1}^{e-1}\binom{x-r_m}{j} \\
			&\qquad=
			\sum_{q=0}^{j}(-1)^q
			\left(
			\sum_{m=1}^{e}\binom{n_m}{q}
			-\sum_{m=1}^{e-1}\binom{r_m}{q}
			\right)
			\binom{x-q}{j-q}.
		\end{align*}
		At $x=N_{\mathbf r}$, this specializes to Corollary~\ref{cor:18}.
	\end{theorem}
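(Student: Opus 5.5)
The plan is to prove Theorem~\ref{thm:8} termwise, using Lemma~\ref{lem:3} once for each facet and once for each separator. Both sides are linear combinations of the polynomials appearing in Lemma~\ref{lem:3}, with the same coefficient pattern: coefficient $+1$ for each facet cardinality $n_m$ and $-1$ for each separator cardinality $r_m$. So it suffices to establish, for each nonnegative integer $A$ and each $j\ge0$, the polynomial identity
\[
\binom{x-A}{j}=\sum_{q=0}^{j}(-1)^q\binom{A}{q}\binom{x-q}{j-q}.
\]

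The steps are as follows. First, apply Lemma~\ref{lem:3} with $A=n_m$ for $m=1,\ldots,e$ and sum the resulting identities. Second, apply it with $A=r_m$ for $m=1,\ldots,e-1$ and subtract. Third, exchange the finite sums over $m$ and $q$ on the right-hand side. This collects the coefficient of $(-1)^q\binom{x-q}{j-q}$ into
\[
\sum_{m=1}^{e}\binom{n_m}{q}-\sum_{m=1}^{e-1}\binom{r_m}{q},
\]
which is exactly the stated right-hand side. Each $n_m$ and $r_m$ is a nonnegative integer, and empty separators give $A=0$, so the hypotheses of Lemma~\ref{lem:3} hold in every case.

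For the specialization, substitute $x=N_{\mathbf r}$. The polynomial binomials $\binom{N_{\mathbf r}-n_m}{j}$ and $\binom{N_{\mathbf r}-r_m}{j}$ then have nonnegative integer upper arguments, since $n_m,r_m\le N_{\mathbf r}$. Likewise $\binom{N_{\mathbf r}-q}{j-q}$ has a nonnegative upper argument whenever $q\le j\le N_{\mathbf r}$, so on all these terms the polynomial and combinatorial conventions agree. One point needs care: when $q>N_{\mathbf r}$, the polynomial value $\binom{N_{\mathbf r}-q}{j-q}$ with negative upper argument can be nonzero, whereas the paper's convention sets it to zero. In that range, however, the factor $\sum\binom{n_m}{q}-\sum\binom{r_m}{q}$ vanishes, because every $n_m$ and $r_m$ is at most $N_{\mathbf r}$. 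Hence the two readings agree term by term, and the evaluation recovers Corollary~\ref{cor:18}.

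This bookkeeping of binomial conventions is the only real obstacle I expect. The rest is linear combination of instances of Lemma~\ref{lem:3}.
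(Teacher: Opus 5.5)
Your proposal is correct and is essentially the paper's proof: apply Lemma~\ref{lem:3} with $A=n_m$ and with $A=r_m$, then sum with the indicated signs. Your additional check that the polynomial and combinatorial binomial conventions agree at $x=N_{\mathbf r}$ (the terms with $q>N_{\mathbf r}$ are killed by the vanishing factor $f_{q-1}(\Delta_{\mathbf r})$) makes explicit a point that the paper's one-line specialization via Proposition~\ref{prop:2} and Corollary~\ref{cor:18} leaves implicit.
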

	
	\begin{proof}
		Apply Lemma~\ref{lem:3} with $A=n_m$ and with $A=r_m$, and sum with the indicated signs. The specialization $x=N_{\mathbf r}$ follows from Proposition~\ref{prop:2} and Corollary~\ref{cor:18}.
	\end{proof}
	
	\begin{corollary}\label{cor:19}
		Let $\Delta_{\mathbf r}=\Delta_{\mathbf r}(n,\ldots,n)$ be a chordal clique complex with $e$ facets, and assume that every separator has cardinality $r$. Then, for every $j\geq0$,
		\begin{align*}
			e\binom{x-n}{j}-(e-1)\binom{x-r}{j}
			&=
			\sum_{q=0}^{j}(-1)^q
			\left(
			e\binom{n}{q}-(e-1)\binom{r}{q}
			\right)
			\binom{x-q}{j-q}.
		\end{align*}
		For the corresponding uniform chordal clique complex,
		$x=N_{\mathbf r}=en-(e-1)r$ gives the specialization at the actual vertex count.
	\end{corollary}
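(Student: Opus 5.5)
This is a direct specialization of Theorem~\ref{thm:8}, so the plan is to substitute the uniform data into that polynomial identity and then evaluate at the actual vertex count. No new combinatorial input is needed.

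Setting $n_1=\cdots=n_e=n$ and $r_1=\cdots=r_{e-1}=r$, each facet sum collapses: $\sum_{m=1}^e\binom{x-n_m}{j}=e\binom{x-n}{j}$ and $\sum_{m=1}^{e-1}\binom{x-r_m}{j}=(e-1)\binom{x-r}{j}$. In the same way the inner coefficient on the right becomes $e\binom nq-(e-1)\binom rq$. This gives the displayed identity as an identity of polynomials in $x$.

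Alternatively, one can apply Lemma~\ref{lem:3} twice, once with $A=n$ and once with $A=r$, and form the linear combination with weights $e$ and $-(e-1)$. This route also shows the identity does not even need the underlying complex to exist; it is purely a consequence of Lemma~\ref{lem:3}.

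For the specialization, compute $N_{\mathbf r}=\sum n_m-\sum r_m=en-(e-1)r$ from the vertex count formula in Section~\ref{sec:3}. Substituting $x=N_{\mathbf r}$ then recovers Corollary~\ref{cor:18} in the uniform case, exactly as noted at the end of Theorem~\ref{thm:8}.

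There is no real obstacle here. The only point needing care is that $\binom{x-n}{j}$ is the polynomial binomial of Lemma~\ref{lem:3}, whereas at $x=N_{\mathbf r}$ the paper uses the integer convention that a binomial vanishes when its upper argument is negative. These agree at $x=N_{\mathbf r}$ because $N_{\mathbf r}-n\ge0$ and $N_{\mathbf r}-r\ge0$. For a nonnegative integer upper argument $a<j$, the product $a(a-1)\cdots(a-j+1)$ contains the factor $0$, so both conventions give zero.
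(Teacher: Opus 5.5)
Your proposal is correct and is exactly the paper's argument: the paper proves Corollary~\ref{cor:19} in one line by substituting $n_m=n$ and $r_m=r$ into Theorem~\ref{thm:8}, which is your first paragraph. Your remark on binomial conventions at $x=N_{\mathbf r}$ is a sound, optional addition. The corresponding check on the right-hand side is also harmless, since $e\binom nq-(e-1)\binom rq=0$ whenever $q>n$, so no term with $N_{\mathbf r}-q<0$ survives.
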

	
	\begin{proof}
		This is Theorem~\ref{thm:8} after substituting $n_m=n$ and $r_m=r$.
	\end{proof}
	
	\begin{corollary}\label{cor:20}
		For nonnegative integers $n,r$ and every $j\geq0$,
		\[
		\binom{r-n}{j}
		=
		\sum_{q=0}^{j}(-1)^q\binom{n}{q}\binom{r-q}{j-q},
		\]
		where a binomial coefficient with a possibly negative upper entry is interpreted through the binomial polynomial above.
	\end{corollary}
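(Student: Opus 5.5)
This identity follows from Lemma~\ref{lem:3}, specialized at $x=r$ with $A=n$. Lemma~\ref{lem:3} is an identity of polynomials in $x$, so it may be evaluated at any integer, including values of $x$ smaller than $A$. Setting $x=r$ and $A=n$ gives
\[
\binom{r-n}{j}=\sum_{q=0}^{j}(-1)^q\binom nq\binom{r-q}{j-q},
\]
which is the stated formula. Here every binomial coefficient whose upper entry may be negative ($r-n$ or $r-q$) is read as the binomial polynomial evaluated at that integer.

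The one point to check concerns conventions. The paper's standing convention sets $\binom ab=0$ whenever $a<0$, while the corollary uses the polynomial interpretation. The substitution is legitimate only if every coefficient in Lemma~\ref{lem:3} is read polynomially, since that is the form in which the lemma was proved. The factor $\binom nq$ has nonnegative upper entry $n$ and $0\le q\le j$, so it agrees with the ordinary combinatorial value. The factors $\binom{r-q}{j-q}$ and $\binom{r-n}{j}$ are values of the polynomials $\binom{x-q}{j-q}$ and $\binom{x-n}{j}$ at $x=r$, which is exactly the reading specified in the statement.

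As an optional cross-check, the same formula is a consequence of Theorem~\ref{thm:8}. Take formally $e=1$ and a single separator term, or simply evaluate each summand of Lemma~\ref{lem:3} separately. No step presents a real obstacle; the only care needed is the convention issue just described.
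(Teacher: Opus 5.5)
Your proof is correct and is the paper's argument: specialize the polynomial identity of Lemma~\ref{lem:3} at $A=n$, $x=r$, and read $\binom{r-n}{j}$ and $\binom{r-q}{j-q}$ as binomial polynomials, exactly as the statement prescribes. The optional cross-check via Theorem~\ref{thm:8} is loosely worded, since that theorem assumes $e\ge2$ and with $e=1$ there are no separator terms, but it is not needed.
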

	
	\begin{proof}
		Set $A=n$ and $x=r$ in Lemma~\ref{lem:3}. This is a specialization of the polynomial identity and is independent of the vertex count relation defining $N_{\mathbf r}$.
	\end{proof}
	
	\begin{remark}\label{rem:6}
		Lemma~\ref{lem:3} and Corollary~\ref{cor:20} are classical forms of the alternating Chu--Vandermonde convolution. The contribution here is not a new underlying binomial identity. Rather, Corollary~\ref{cor:18} and Theorem~\ref{thm:8} show how that convolution is organized naturally by the facet and separator data of a chordal clique complex. Corollary~\ref{cor:19} is the immediate uniform specialization.
	\end{remark}
	
	\section*{Acknowledgments}
	The author thanks Ralf Fr\"oberg for encouraging correspondence, careful comments on early drafts, and suggestions that improved the presentation.


\begin{thebibliography}{99}
		
		\bibitem{AB57}
		M. Auslander and D. A. Buchsbaum, \emph{Homological dimension in local rings}, Trans. Amer. Math. Soc. \textbf{85} (1957), 390--405, \doi{10.1090/S0002-9947-1957-0086822-7}.
		
		\bibitem{BlairPeyton}
		J. R. S. Blair and B. W. Peyton, \emph{An introduction to chordal graphs and clique trees}, in A. George, J. R. Gilbert and J. W. H. Liu (eds.), Graph Theory and Sparse Matrix Computation, IMA Volumes in Mathematics and its Applications, vol. 56, Springer, New York, 1993, pp. 1--29, \doi{10.1007/978-1-4613-8369-7_1}.
		
		\bibitem{Bjorner95}
		A. Bj\"orner, \emph{Topological methods}, in R. L. Graham, M. Gr\"otschel and L. Lov\'asz (eds.), Handbook of Combinatorics, vol. 2, Elsevier, Amsterdam, 1995, pp. 1819--1872.
		
		\bibitem{BjornerTancer09}
		A. Bj\"orner and M. Tancer, \emph{Combinatorial Alexander duality---a short and elementary proof}, Discrete Comput. Geom. \textbf{42} (2009), 586--593, \doi{10.1007/s00454-008-9102-x}.
		
		\bibitem{BH98}
		W. Bruns and J. Herzog, \emph{Cohen--Macaulay Rings}, revised ed., Cambridge Studies in Advanced Mathematics, vol. 39, Cambridge University Press, Cambridge, 1998, \doi{10.1017/CBO9780511608681}.
		
		\bibitem{ER98}
		J. A. Eagon and V. Reiner, \emph{Resolutions of Stanley--Reisner rings and Alexander duality}, J. Pure Appl. Algebra \textbf{130} (1998), 265--275, \doi{10.1016/S0022-4049(97)00097-2}.
		
		\bibitem{FaridiHersey}
		S. Faridi and B. Hersey, \emph{Resolutions of monomial ideals of projective dimension $1$}, Comm. Algebra \textbf{45} (2017), 5453--5464, \doi{10.1080/00927872.2017.1313422}.
		
		\bibitem{Fr}
		R. Fr\"oberg, \emph{On Stanley--Reisner rings}, in Topics in Algebra, Part 2 (Warsaw, 1988), Banach Center Publ. \textbf{26}, PWN, Warsaw, 1990, pp. 57--70, \doi{10.4064/-26-2-57-70}.
		
		\bibitem{Fr1}
		R. Fr\"oberg, \emph{Betti numbers of fat forests and their Alexander dual}, J. Algebraic Combin. \textbf{56} (2022), 1023--1030, \doi{10.1007/s10801-022-01143-0}.
		
		\bibitem{Fr2}
		R. Fr\"oberg, \emph{Betti numbers of thick trees}, Util. Math. \textbf{127} (2026), 173--181, \doi{10.61091/um127-12}.
		
		\bibitem{HHMTZ08}
		J. Herzog, T. Hibi, S. Murai, N. V. Trung and X. Zheng, \emph{Kruskal--Katona type theorems for clique complexes arising from chordal and strongly chordal graphs}, Combinatorica \textbf{28} (2008), 315--323, \doi{10.1007/s00493-008-2319-8}.
		
		\bibitem{HHZ04}
		J. Herzog, T. Hibi and X. Zheng, \emph{Dirac's theorem on chordal graphs and Alexander duality}, European J. Combin. \textbf{25} (2004), 949--960, \doi{10.1016/j.ejc.2003.12.008}.
		
		\bibitem{HHZPowers}
		J. Herzog, T. Hibi and X. Zheng, \emph{Monomial ideals whose powers have a linear resolution}, Math. Scand. \textbf{95} (2004), 23--32, \doi{10.7146/math.scand.a-14446}.
		
		\bibitem{HerzogHibi99}
		J. Herzog and T. Hibi, \emph{Componentwise linear ideals}, Nagoya Math. J. \textbf{153} (1999), 141--153, \doi{10.1017/S0027763000006930}.
		
		\bibitem{HibiLevel}
		T. Hibi, \emph{Level rings and algebras with straightening laws}, J. Algebra \textbf{117} (1988), 343--362, \doi{10.1016/0021-8693(88)90111-1}.
		
		\bibitem{Ho77}
		M. Hochster, \emph{Cohen--Macaulay rings, combinatorics, and simplicial complexes}, in Ring Theory II, Lecture Notes in Pure and Appl. Math. \textbf{26}, Dekker, New York, 1977, pp. 171--223.
		
		\bibitem{Namiq1}
		M. R. Namiq, \emph{Initially Cohen--Macaulay modules}, New Math. Nat. Comput. (2026), published online, \doi{10.1142/S1793005728500226}.
		
		\bibitem{Namiq2}
		M. R. Namiq, \emph{The graded Betti numbers of the skeletons of simplicial complexes}, Indag. Math. (N.S.) (2026), in press, \doi{10.1016/j.indag.2026.03.006}.
		
		\bibitem{PPTZ26}
		M. R. Pournaki, M. Poursoltani, N. Terai and S. Yassemi, \emph{Level simplicial complexes of codimension two and a conjecture of Stanley on pure $O$-sequences}, Comm. Algebra (2026), published online 27 June 2026, \doi{10.1080/00927872.2026.2664042}.
		
		\bibitem{RV15}
		J. Roksvold and H. Verdure, \emph{Betti numbers of skeletons}, arXiv:1502.05670 (2015), \doi{10.48550/arXiv.1502.05670}.
		
	\end{thebibliography}
\end{document}